\documentclass[11pt, oneside, labels]{amsart}   	
\usepackage{geometry}                		
\usepackage{amsmath} 
\usepackage{amsthm} 
\usepackage{amssymb}	
\usepackage{graphicx} 

\usepackage{float}

\usepackage{color}

\usepackage{enumitem}

\newtheorem{theorem}{Theorem}[section]
\newtheorem{lemma}[theorem]{Lemma}
\newtheorem{corollary}[theorem]{Corollary}
\newtheorem{prop}[theorem]{Proposition}
\theoremstyle{definition}
\newtheorem{dfn}{Definition}

\theoremstyle{remark}
\newtheorem*{rmk}{Remark}

\newcommand{\R}{{\mathbb R}}
\newcommand{\B}{{\mathbb B}}
\newcommand{\sph}{{\mathbb S}}
\renewcommand{\div}{\text{\rm div}\, }
\newcommand{\grad}{\, \text{\rm grad}\,}
\newcommand{\bs}[1]{{\boldsymbol  #1}}
\newcommand{\bb}[1]{{\mathbf  #1}}
\newtheorem{exa}{Example}[section]

\DeclareMathOperator{\rnge}{range}

\title[Inverse Potential Problems  
with Total Variation Regularization]{Inverse Potential Problems for 
Divergence
of Measures with Total Variation Regularization}

    \author{L. Baratchart}
   \address{Projet APICS, INRIA, 2004 route des Lucioles, BP 93,
Sophia-Antipolis, 06902 Cedex, FRANCE}
\email{Laurent.Baratchart@inria.fr}
  \author{C. Villalobos Guillen}
\address{Department of Mathematics,
Vanderbilt University,
Nashville, TN 37240, USA}
\email{ cristobal.villalobos.guillen@Vanderbilt.Edu}

  \author{D.P.~Hardin}
\address{Department of Mathematics,
Vanderbilt University,
Nashville, TN 37240, USA}
\email{doug.hardin@vanderbilt.edu}

\author{M.C. Northington}
\address{School of Mathematics, 
Georgia Institute of Technology, 
Atlanta, GA 30313
}
\email{mcnv3@gatech.edu}
  \author{E.B.~Saff}
\address{Department of Mathematics,
Vanderbilt University,
Nashville, TN 37240, USA}
\email{edward.b.saff@vanderbilt.edu}

\thanks{This research  was supported, in part, by the U.\ S.\ National Science Foundation under
grant  DMS-1521749, and by the INRIA grant to the associate team IMPINGE.}

 \keywords{divergence free, distributions, solenoidal, total variation of measures, magnetization, inverse problems, purely 1-unrectifiable}
  
\setenumerate[0]{label={\rm (\alph*)}}

\begin{document}

\begin{abstract}
	We study  inverse problems for the Poisson equation with source term the divergence of  an $\R^3$-valued measure; that is, the potential $\Phi$ satisfies
	$$
	\Delta \Phi= \div \bs{\mu},
	$$
	and $\bs{\mu}$ is to be reconstructed 
knowing (a component of) the field $\grad \Phi$ on a set disjoint from the support of $\bs{\mu}$.  Such problems arise in several electro-magnetic contexts
in the quasi-static regime, for instance
when recovering a remanent magnetization from measurements of its magnetic field.   We develop methods for  recovering $\bs{\mu}$  based on total variation regularization.   We provide sufficient conditions for the unique recovery of
$\bs{\mu}$, asymptotically when the regularization parameter and the noise 
tend to zero in a combined fashion,  when it is  uni-directional or  when the magnetization has a support 
	which is sparse in the sense that it is purely 1-unrectifiable.   
	
	Numerical examples are provided to illustrate the main theoretical results.   
\end{abstract}

\maketitle

\section{Introduction}
\label{intro}
This work is concerned with inverse potential problems with source term 
in divergence form. That is,
a $\R^3$-valued measure on $\R^3$ has to be recovered knowing
(one component of) the field of the Newton potential of its divergence on 
a piece of surface, away from the support. 
Such issues typically arise in source identification  
from field measurements for Maxwell's equations, in the quasi-static 
regime. They occur for instance
in electro-encephalography (EEG), 
magneto-encephalography (MEG), geomagnetism 
and paleomagnetism, as well as in several non-destructive
testing problems, see {\it e.g.} \cite{BaMoLe,BaKo,Blakely,Parker,KreKuPo}
and their bibliographies. A model problem of our particular interest is
inverse scanning magnetic microscopy, as considered for instance in
\cite{IMP,IMP2,BCL-JE} to recover magnetization distributions of 
thin rock samples, but the considerations below are of a more general 
and abstract 
nature. Our main objective is to introduce 
notions of sparsity that  help recovery in this infinite-dimensional context,
when regularization is performed by penalizing the total variation of the 
measure.


\subsection{Two Basic Extremal Problems}

For a closed subset   $S\subset \R^3$, let $\mathcal{M}(S)$ denote the space of
finite signed Borel measures on $\R^3$ whose support lies in $S$. 
In this paper we consider inverse problems associated with the equation
\begin{equation}\label{equationPhi}
\Delta \Phi=\div \bs{\mu}  
\end{equation} 
where $\bs{\mu}$ is an unknown measure in $\mathcal{M}(S)^3$ to be recovered.  Under suitable conditions on the decay of $\Phi$ at infinity, there is a unique solution $\Phi=\Phi(\bs{\mu})$  (see Section~\ref{Sect3D}).   Such problems arise in magnetostatics (our primary motivation) where  
$\bs{\mu}$ models a   {\em magnetization} distribution.  Then   
\begin{equation}\label{bextension}
\bb{b}(\bs{\mu})=\mu_0\left(\bs{\mu}-\grad\Phi(\bs{\mu})\right)
\end{equation} 
is the   {\em magnetic   field} $\bb{b}(\bs{\mu})$ generated by   $\bs{\mu}$ and it follows from \eqref{equationPhi} that $\bb{b}(\bs{\mu})$ is divergence-free.   
The term $\bb{h}(\bs{\mu}):=\mu_0\grad\Phi(\bs{\mu})$ is called the {\em magnetic intensity} generated by $\bs{\mu}$.  
 We refer to \eqref{equationPhi} as a
{\em Poisson-Hodge} equation since \eqref{bextension} provides a decomposition of $\bs{\mu}$ into curl-free and divergence-free terms.

The mapping $\bs{\mu}\to \bb{b}(\bs{\mu})$ is, in general, not injective.  We say that magnetizations $\bs{\mu},\bs{\nu}\in\mathcal{M}(S)^3$ are {\em $S$-equivalent} if  $\bb{b}(\bs{\mu})$ and $\bb{b}(\bs{\nu})$ agree on $\R^3\setminus S$, in which case we write
$$
\bs{\mu}\stackrel{S}{\equiv}  {\bs{\nu}}.
$$
A magnetization $\bs{\mu}$ is said to be {\em $S$-silent} (or {\em silent in $\R^3\setminus S$}) if it is $S$-equivalent to the zero magnetization; i.e., if $\bb{b}(\bs{\mu})$ vanishes on $\R^3\setminus S$.   It is suggestive from \eqref{equationPhi} (see Theorem~\ref{divM} below) that
a divergence-free magnetization $\bs{\mu}\in\mathcal{M}(S)^3$ is $S$-silent.  For the converse direction we introduce the following terminology.
\begin{dfn}
\label{defslender}
	We will call a closed set $S\subset\R^3$   \emph{slender} (with respect to $\R^3$) if its Lebesgue measure $\mathcal{L}_3(S)=0$ and 
	each   connected component $C$ of $\R^3\setminus S$ has   $\mathcal{L}_3(C)=\infty$.
\end{dfn}   
For example, a subset of a plane is slender in $\R^3$, while a sphere or a 
ball is not.  

Slender sets form a family of $S$ for which the above-mentioned
inverse problem makes contact 
with geometric measure theory, since Theorem~\ref{divM} shows that if $S$ is a slender set, then any $S$-silent 
magnetization is divergence-free. 
Smirnov \cite{Smi94}  characterizes divergence-free magnetizations (also known as {\em solenoids}) in $\R^3$ in terms of magnetizations that are absolutely continuous with respect to 1-dimensional Hausdorff measure, and we shall make
use of this characterization.

We will assume that scalar data  of the form   
$f=A(\bs{\mu}):= v\cdot\bb{b}(\bs{\mu})$, for some  fixed nonzero vector $v\in\R^3$, is given on a closed subset $Q\subset \R^3\setminus  S$, where $A$ is
the \emph{forward operator} mapping $\bs{\mu}$ to the restriction on $Q$
of $v\cdot\bb{b}(\bs{\mu})$. We will consider the situation where
\begin{itemize}
	\item[(a)] $A: \mathcal{M}(S)^3\to L^2(Q)$ boundedly and
	\item[(b)] $A(\bs{\mu})=0$ if and only if $\bs{\mu}$ is $S$-silent.
\end{itemize}   Since $\bb{b}(\bs{\mu})$ is harmonic on   $\R^3\setminus S$, condition (a) will hold if  $Q$ and $S$ are positively separated, 
and if $Q$ is compact (which is the case in practice), then 
$A$ is a compact operator.  
Theorem~\ref{Lem4} provides sufficient conditions for  (b) to hold. 
Condition
(b) means that the observation 
is ``faithful'', {\it i.e.} if the $v$-component of the field on $Q$ is zero then
the field is indeed zero everywhere off $S$. In this case, the null space of
$A$ (which is a crucial ingredient of the inverse problem) coincides 
with $S$-silent magnetizations which depend solely on the geometry of $S$ and
can be studied using potential and measure-theoretic tools. 
As we shall see,  $A$ has, in general, a nontrivial null 
space  and, if $Q$ is ``thin enough'',
it has dense range (see Lemma \ref{Acomp}).
Note  that a typical magnetic sensor is a coil measuring the component of
the field parallel to its axis, which is why measurements are assumed to be of the form $v\cdot \bb{b}(\bs{\mu})$. The fact that $v$ is a constant vector 
means that the orientation of the sensor is kept fixed. In some cases,
{\it e.g.} in magneto-encephalography, $v$ would   depend
on the point where a measurement  is made. We do not consider this 
more complicated situation, as we are particularly motivated by 
applications to 
scanning magnetic microscopy (SMM) 
where measurements of the vertical component of  the magnetic field, 
namely $b_3(\bs{\mu})=e_3\cdot \bb{b}(\bs{\mu})$ ($e_i$  denotes
the $i$-th unit vector 
of the canonical basis for $\R^3$, for $i=1,2,3$),   are taken on a rectangle $Q$ in a plane $x_3=h$ for some $h>0$, while $S$ is contained in the half-space $x_3\le 0$.  

The focus of this paper is on the use of the total variation norm for measures to regularize the ill-posed inverse problem of recovering $\bs{\mu}$ from $A(\bs{\mu})$.  Let us briefly recall that the total variation measure of $\bs{\mu}\in  \mathcal{M}(S)^3$  is the positive Borel measure   $|\bs{\mu}|\in   \mathcal{M}(S)$ such that 
$d\bs{\mu}=\bb{u}_{\bs{\mu}}d|\bs{\mu}|$  for some $\R^3$-valued Borel measurable function $\bb{u}_{\bs{\mu}}$ satisfying $|\bb{u}_{\bs{\mu}}|=1$ at  $|\bs{\mu}|$-a.e. point, see \eqref{totVarMeasDef}. The {\em total variation norm of $\bs{\mu}$} is then defined as
\begin{equation}\label{TVnormdef}\|\bs{\mu}\|_{TV}:=|\bs{\mu}|(\R^3).\end{equation}
If $\bb{u}_{\bs{\mu}}$ is constant  $|\bs{\mu}|$-a.e., then we   call $\bs{\mu}$ {\em uni-directional}.  Such magnetizations arise in geological samples whose remanent magnetizations were formed in a uniform external field.


We investigate two extremal problems for magnetization recovery.  The first  is that of minimizing the total variation  over  magnetizations  $S$-equivalent to a given one.  To fix notation, for $\bs{\mu}\in \mathcal{M}(S)^3$, let
\begin{equation}\label{defmineq}
M(\bs{\mu}):=\inf \{ \|\bs{\nu}\|_{TV}\colon \bs{\nu} \stackrel{S}{\equiv}  {\bs{\mu}}\}.
\end{equation}

\noindent {\bf Extremal Problem 1 (EP-1).}
For $\bs{\mu}_0\in \mathcal{M}(S)^3$, find $\bs{\mu}\stackrel{S}{\equiv} \bs{\mu}_0$   such that  $\|\bs{\mu}\|_{TV}=M(\bs{\mu}_0)$.
\smallskip

The second extremal problem involves minimizing the following functional
defined for $\bs{\mu}\in\mathcal{M}(S)^3$, $f\in L^2(Q)$, and $\lambda>0$, by
\begin{equation}
\label{defcrit0}
\mathcal{F}_{f,\lambda}(\bs{\mu}):=
\|f-A\bs{\mu}\|_{L^2(Q)}^2+\lambda \|\bs{\mu}\|_{TV}.
\end{equation}
We consider the problem of finding some  $\bs{\mu_\lambda}\in \mathcal{M}(S)^3$  minimizing $\mathcal{F}_{f,\lambda}$: \\

\noindent {\bf Extremal Problem 2 (EP-2).}
Given   $f\in L^2(Q)$, find $\bs{\mu}_\lambda\in\mathcal{M}(S)^3$ such that
\begin{equation}
\label{crit0}
\mathcal{F}_{f,\lambda}(\bs{\mu_\lambda})=\inf_{\bs{\mu}\in\mathcal{M}(S)^3} \mathcal{F}_{f,\lambda}(\bs{\mu}).
\end{equation}
\smallskip

We remark that the total variation norm is convex on $\mathcal{M}(S)^3$ but not strictly convex and so there may be multiple   $\bs{\mu}$ that solve  EP-1 for a given $\bs{\mu}_0$.  
Still, we show in Section~\ref{Sect3D} that, under the assumption that $S$ is slender,  EP-1 uniquely recovers the magnetization in two  important cases: (a)  the magnetization is supported on
a purely 1-unrectifiable set  (see  Theorem \ref{unrec3D}) 
and (b) the magnetization is piecewise uni-directional (see Theorem \ref{UnidirThm}
).  The notion of purely 1-unrectifiable set is classical 
from geometric measure 
theory, and  means that intersection of the set with any rectifiable arc has 1-dimensional Hausdorff measure  zero.  In particular, any set with 1-dimensional Hausdorff measure zero  is purely 1-unrectifiable and so   EP-1   recovers a   large class of magnetizations  with `sparse' support including, for example, those with  countable or   Cantor-like (with Hausdorff dimension less than 1) support.   Note that uni-directionality involves some sort of sparsity as well,
this time direction-wise.

In Section~\ref{Sect3D}, we also consider the {\em net moment} of $\bs{\mu}$ given by
\begin{equation}\label{netmomdef}\langle \bs{\mu}\rangle:=\bs{\mu}(\R^3),\end{equation}
which is   a quantity of physical interest and is useful in our analysis of uni-directional magnetizations.
Under the assumption that $S$ is compact or a  slender set,
Theorem~\ref{divM} and  Lemma~\ref{MomZero}
show that $S$-equivalent magnetizations must have the same net moment.
Hence, $\langle \bs{\mu}\rangle$ is uniquely defined by 
the field in this case. However,
Remark \ref{momsil} after  that proposition shows this needs not hold
if $S$ is neither compact nor slender.

Solutions to  EP-2 connect to those of EP-1 as follows.
If
$f=A(\bs{\mu})+e$ with `noise' $e\in L^2(Q)$, any weak-* accumulation point 
of the solutions to EP-2 when $\lambda\to0$ and $e/\sqrt{\lambda}\to0$ must 
be a solution of EP-1. This Tikhonov-like regularization theory is by now
essentially understood in a more general context
\cite{BurOsh,HoKaPoSch,BrePikk}. 
In Section \ref{regu},
we improve on previous results by showing
that this property holds not only for the $\R^3$-valued measures involved but 
also for their total variation measures, and that weak-* convergence 
strengthens to narrow convergence (see Theorem \ref{lamto0Thm}). 
Hence, ``no mass is lost'' in the limit. This is important for  if
EP-1 has a unique solution, it implies that solutions to EP-2 
asymptotically put mass exactly where the ``true''  magnetization does, 
a property which is usually not satisfied under weak-$*$ convergence
of measures.
Another feature of solutions to EP-2, 
which is more specific to the present situation, is that
they  are supported on ``small'' sets, of codimension at least 1 in $S$
(see Corollary \ref{corspar}).

Altogether, when the ``true'' magnetization  is sparse in one of
the senses mentioned above (that is, if it can be recovered by EP-1), 
we obtain  in Theorem \ref{weak*to0} and Corollary \ref{points} asymptotic
recovery results which, from the strict point of view of inverse problems,
recap the main contributions of the paper.
Finally, let us mention that EP-2 has a unique solution when $S$ is 
contained in a plane, even though $\mathcal{F}_{f,\lambda}$ is not strictly 
convex  if  $A$ has a nontrivial null space
(the usual case considered in this paper).
A proof would take us too far into the structure
of divergence-free measures in the plane, and is beyond the scope of 
this paper, see \cite{BHV}.
Whether uniqueness holds more generally is an intriguing open question.


\subsection{Background and Related Work}

The class of  inverse problems investigated below can be 
viewed as  deconvolution  from partial observations.  
Note that we work with vector-valued 
measures supported on sets of dimension greater than 1, and 
the forward operator $A$,
whose convolution kernel is (weakly) singular, has a rather large null-space
which depends in a fairly complicated manner on the geometry of the observation set $Q$  and of the set  $S$
{\it a priori} 
containing the support of the unknown measure. 



The main contribution of the present work is perhaps to connect geometric measure 
theory with regularization theory for such inverse problems.
This connection essentially rests on the structure
of the null-space of the 
forward operator: in fact, the conditions on $Q$ and $S$ 
set up in  this work for unique recovery of sparse measures to hold
are designed so that
this  null-space
consists exactly of divergence-free measures. 
These conditions generalize those given in \cite{IMP} for planar distributions to magnetizations supported on more general sets in $\R^3$.  
This characterization of the null-space is central to the present approach,
and allows us to rely  on classical tools from geometric measure theory
and on material from \cite{Smi94} 
to proceed with the proof of our main result about  EP-1, namely the 
minimality of the total variation of a sparse measure in its coset modulo 
the null-space.
More general situations, notably the case where $S$ is a closed surface
or where it has positive Lebesgue 
measure
in $\R^3$ (and thus is not slender),  are left 
for future research. Such situations are relevant in particular
to EEG and MEG  inverse problems, as well as to SMM
on volumic samples.

Whether the property of having purely 1-unrectifiable support qualifies a 
measure as being ``sparse'' is debatable: for instance the support could still
disconnect the space (like the Koch curve does in 2-D).
Nevertheless, 
it comprises standard notions of sparsity, such as being a 
finite sum of Dirac masses, which is why we consider 
purely 1-unrectifiability of the support as a generalized notion of sparsity in this context.
Moreover if $S$ is slender we shall see that another convenient
notion of sparsity is to be a finite
sum of measures with disjoint supports, each of which assumes constant
direction (see Theorem \ref{UnidirThm}). 
Thus, sparsity in the present context may also be envisaged ``direction-wise''.

Let us now briefly discuss connections to other works dealing with
sparse recovery.
After early studies \cite{DoLo,Tibshi} and the seminal work in
\cite{CRT,CRT1,Do,Do1}, approximately solving  underdetermined 
systems of linear equations in $\R^n$ by minimizing the 
residuals while penalizing the $l^1$-norm has proved to be quite successful  
in identification. In fact, under appropriate assumptions on the matrix of 
the system, this kind of approximation favors the recovery of sparse 
solutions, {\it i.e.} solutions having
a large number of zero components, when they exist. This  has resulted in
the theory of compressed sensing, which shows by and large
that  a sparse
signal can be recovered from much less linear observations 
than is {\it a priori} needed, see for example
\cite{FouRau} and the bibliography therein.

In recent years, natural analogs in infinite-dimensional settings
have been
investigated by several authors, but then the situation is much less understood.
A Tikhonov-like
regularization theory was developed in \cite{BurOsh,HoKaPoSch,BrePikk}
for linear equations whose unknown is a (possibly $\R^n$-valued) measure, by
minimizing the residuals while penalizing the total variation. 
As expected from the non-reflexive character of spaces of measures,
consistency estimates generally hold in a  rather weak sense, 
such as weak-$*$ convergence of subsequences to solutions of minimum total 
variation, or convergence  in the Bregman distance
when  the so-called source condition holds. Algorithms and proofs
typically rely on Fenchel duality, and reference
\cite{BrePikk} contains an extension of the soft thresholding algorithm
to the case where the unknown gets parametrized as 
a finite linear combination of Dirac masses,
whose location no longer lies on a fixed grid 
in contrast with the discrete case. References \cite{ClaKu,CaClaKu},
which deal with inverse source problems for elliptic operators,
dwell on the same circle of ideas but  suggest a different 
thresholding method, connected with
a Newton step, or else a finite element discretization of the equation 
having a linear combination of Dirac masses
amongst its solutions.
These methods yield contructive 
algorithms to approximate a solution of minimum total variation 
to the initial equation by a sequence of
discrete measures, which is always possible in theory since these are 
weak-$*$ dense in the space of measures supported on an open subset of
$\R^n$. To obtain an asymptotic recovery result,  in the weak-$*$ sense as the
regularizing parameter goes to zero, 
it remains to identify conditions on a measure
ensuring that it is the unique   element 
of least total variation
in its  coset modulo the null-space of the forward operator. 
The gist of compressed sensing is that, in finite-dimension, sparsity is 
such a condition for ``most'' operators.
In infinite-dimension, when total variation-penalization
replaces $l^1$-penalization,
it is unclear which assumptions
imply the desired uniqueness and why they should be connected with 
sparsity. Moreover, granted the diversity of operators involved in 
applications,  it is likely that such assumptions will
much depend on the situation under examination.

Still, a striking connection with sparsity, in the sense of 
being a sum of Dirac masses, was recently
established for 1-D deconvolution issues, 
where a train of spikes is to be recovered from filtered observation thereof
\cite{BhaRe,ClaKu,CaGa,CaFe,DuPe}. More precisely,
following the work in \cite{CaFe} which identifies a sum of 
Dirac masses as the solution of minimum total variation in the case of
an ideal low-pass filter, it was shown in \cite{DuPe} that 
a finite train of spikes can be recovered 
arbitrarily 
well by total variation regularization, when the noise and the regularization 
parameter are small enough, provided that the filter satisfies
nondegeneracy conditions and the spikes are sufficiently separated. 
Also, 
the algorithm in \cite{BrePikk} can be used for that purpose.
The present work may be viewed as investigating in higher dimension 
such deconvolution issues, the filter being now defined by the Biot-Savart law,
and Corollary \ref{points} may be compared to \cite[Thm. 2]{DuPe}.


\subsection{Notation}
We conclude this section with some details of our notation, as used above, for vectors and vector-valued functions, and with some preliminaries concerning measures and distributions.  For a vector $x$ in the Euclidean space $\R^3$,  we denote the $j$-th component of $x$ by $x_j$ and   the partial derivative with respect to $x_j$ by $\partial_{x_j}$.  For $x, y\in \R^3$, $x\cdot y$ and $|x |=\sqrt{x\cdot x}$ denote the usual Euclidean scalar product and norm, respectively.   We denote the   gradient with respect to $y$ by $\grad_y$  and drop the subscript 
 if the variable with respect to which we differentiate is unambiguous.
  By default, we consider vectors $x$ as column vectors; e.g., for $x\in \R^3$ we write 
$x=(x_1,x_2,x_3)^T$ where ``$T$'' denotes ``transpose''.
We  use bold symbols to represent vector-valued functions and measures, and   the corresponding nonbold symbols with subscripts to denote the respective components; e.g., $\bs{\mu}=(\mu_1, \mu_2, \mu_3)$ or $\bb{b}(\bs{\mu})=(b_1(\bs{\mu}),b_2(\bs{\mu}), b_3(\bs{\mu}))$. We let    $\delta_x$ stand  for the Dirac delta measure at a point $x\in \R^3$ and refer to a magnetization of the form $\bs{\mu}=v\delta_x$ for some $v\in \R^3$ as the {\em point dipole at $x$ with moment $v$}.

For $x\in\R^3$ and $R>0$, we let $\B(x,R)$ denote the open ball centered at $x$ with radius $R$ and $\sph(x,R)$ the boundary sphere.
Given a finite measure $\mu\in\mathcal{M}(\R^3)$ and a Borel set $E\subset\R^3$, we denote by $\mu\mathcal{b}E$ the finite measure obtained by restricting  $\mu$ to $E$ ({\it i.e.} for every Borel set $B\subset\R^3$, $\mu\mathcal{b}E(B):=\mu(E\cap B))$.

For $\bs{\mu}\in  \mathcal{M}(S)^3$, the {\em total variation measure} $|\bs{\mu}|$  is defined on Borel sets $B\subset \R^3$ by
\begin{equation}\label{totVarMeasDef}
|\bs{\mu}|(B):=\sup_{\mathcal{P}} \sum_{P\in \mathcal{P}} |\bs{\mu}(P)|, 
\end{equation}
where the supremum is taken over all finite Borel partitions $\mathcal{P}$ of $B$.   
Since $|\bs{\mu}|$ is a Radon measure, the Radon-Nikodym derivative $\bb{u}_{\bs{\mu}}:=d\bs{\mu}/d{|\bs{\mu}|}$ exists  and satisfies $|\bb{u}_{\bs{\mu}}|=1$ a.e.  with respect to $|\bs{\mu}|$.

We shall identify $\bs{\mu} \in \mathcal{M}(\R^k)^k$ with the linear form on 
$(C_c(\R^k))^k$ (the space of $\R^k$-valued continuous  functions  on $\R^k$  with compact support equipped with the sup-norm)
given by 
\begin{equation}
\label{defmesf}\langle \bb{f},  \bs{\mu}\rangle := \int \bb{f} \cdot d\bs{\mu}, \qquad \bb{f}\in (C_c(\R^k))^k.\end{equation}
The norm of the functional
\eqref{defmesf},  is $\|\bs{\mu}\|_{TV}$.
It extends naturally with the same norm to the space 
$(C_0(\R^k))^k$ of $\R^k$-valued continuous functions on $\R^k$ vanishing at infinity.

At places, we also identify $\bs{\mu}$ with the restriction of \eqref{defmesf}
to $(C^\infty_c(\R^3))^3$, where $C^\infty_c(\R^3)$ is
the space of $C^\infty$-smooth functions with compact support,
equipped with the usual topology 
making it an LF-space 
\cite{Schwartz}.   
We refer to a continuous 
 linear functional on $(C^\infty_c(\R^m))^n$ as being
a distribution.


We denote     Lebesgue measure on $\R^3$ by $\mathcal{L}_3$ and $d$-dimensional Hausdorff measure on $\R^3$ 
by $\mathcal{H}_d$ .  We normalize $\mathcal{H}_d$    for $d=1$ and $2$ so that it coincides with  arclength and   surface area for smooth curves and surfaces, respectively.  We denote    the Hausdorff dimension of a set $E\subset \R^3$ by $\dim_{\mathcal{H}}(E)$.

 \section{Equivalent  magnetizations, net moments, and total variation}\label{Sect3D}
 In this section, we discuss some  regularity issues for magnetic 
fields and potentials, and we study the connection between 
$S$-silent sources and $\R^3$-valued measures which are distributionally
divergence-free. This rests on the notion of a \emph{slender set} 
introduced in Definition~\ref{defslender}.
Subsequently, we solve Extremal Problem 1 
for certain classes of magnetizations when $S$ is slender. 
Such magnetizations are ``sparse'', in the sense that
either their support is purely 1-unrectifiable (see 
Theorem \ref{unrec3D}) or 
they assume a single direction on each piece of some finite partition 
of $S$ (see Theorem \ref{UnidirThm}). We also give conditions 
on $S$ and $Q$ ensuring that the forward operator has kernel the space of
$S$-silent magnetizations (see Lemma \ref{Lem4}).

We first define  the {\em magnetic field} $\bb{b}(\bs{\mu})$ and the   {\em scalar magnetic potential}  $\Phi(\bs{\mu})$ (see \cite{Jac1975}) generated by a magnetization distribution $\bs{\mu}$ at   points $x$ not in the support of  
$\bs{\mu}$   as 
\begin{equation}
\label{bDef1}
\begin{split}
\bb{b}(\bs{\mu})(x)&:=-c\left(\int \frac{1}{|x-y|^3}\, d\bs{\mu}(y) -3 \int (x-y)\frac{(x-y)\cdot d\bs{\mu}(y)}{|x-y|^5}\right) \\
&=-c\grad \int \grad_y\frac{1}{|x-y|}\cdot d\bs{\mu}(y) \\
&=-\mu_0\grad \Phi(\bs{\mu})(x), \qquad x\not\in \text{supp } \bs{\mu},
\end{split}
\end{equation}
where $c=10^{-7}Hm^{-1}$, $\mu_0 = 4 \pi \times c$, and $\Phi(\bs{\mu})$   is given by
\begin{equation}\label{PhiDef}
\Phi(\bs{\mu})(x):=\frac{1}{4\pi}\int \grad_y\frac{1}{|x-y|}\cdot d\bs{\mu}(y)=\frac{1}{4\pi}\int \frac{x-y}{|x-y|^3}\cdot d\bs{\mu}(y) , \qquad x\not\in \text{supp } \bs{\mu}.
\end{equation}     Note that the second equality in \eqref{bDef1} follows from  the smoothness of the 
Newton kernel $1/{|x-y|}$ for $x\neq y$ and  the formula
$$
\grad_x \left( \grad_y\frac{1}{|x-y|}\cdot a\right)=\frac{a}{|x-y|^3}  -3  (x-y)\frac{(x-y)\cdot a}{|x-y|^5},
$$ for a fixed $a\in\R^3$.
    Note that 
$\Phi(\bs{\mu})$ and the components of
$\bb{b}(\bs{\mu})$ are harmonic functions on $\R^3\setminus S$.  We show in Proposition~\ref{LocIntPhi} that $\Phi$ can   be extended to 
a locally integrable function on $\R^3$ that satisfies, in the sense of distributions, the Poisson-Hodge equation \eqref{equationPhi}.  Furthermore, this proposition shows that the magnetic field  $\bb{b}(\bs{\mu})$  extends 
to a divergence-free $\R^3$-valued distribution.

 \begin{prop}\label{LocIntPhi} 
Let $S$ be a closed proper subset of $\R^3$ and $\bs\mu\in\mathcal{M}(S)^3$.
Then, the integral in the right-hand side
of \eqref{PhiDef} converges absolutely
for a.e. $x\in\R^3$, thereby defining
an extension of $\Phi(\bs{\mu})$ to all of $\R^3$. 
Denoting this extension by $\Phi(\bs{\mu})$ again,
it holds 
for each $p$, $q$  with $1\le p<3/2<q\le \infty$ 
that $\Phi(\bs{\mu})\in L^p(\R^3)+L^q(\R^3)$,
and that, in the distributional sense, 
$$\Delta\Phi(\bs{\mu})=\div{\bs{\mu}}.$$ 

Furthermore, the distribution $\mu_0\left(\bs{\mu}-\grad\Phi(\bs{\mu})\right)$ is divergence-free and extends $\bb b(\bs{\mu})$ from a distribution
on $\R^3\setminus S$  to a distribution on $\R^3$. 
Denoting this extension by   $\bb b(\bs{\mu})$ again,
we have that
$\langle T_\alpha \bb f,  \bb b(\bs{\mu})\rangle \rightarrow 0$ as $|\alpha|\rightarrow\infty$ for every $\bb{f}\in (C_c^\infty(\R^3))^3$ (here $T_\alpha\bb f$ denotes  the translation of the argument of $\bb f$ by $\alpha$).
\end{prop}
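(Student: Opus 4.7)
The plan is to dispatch the four assertions in sequence. For the absolute convergence of the integral in \eqref{PhiDef} and the $L^p+L^q$ statement, I would observe that the vector kernel $K(x):=x/(4\pi|x|^3)$ is dominated by $1/(4\pi|x|^2)$ and so splits as $K_1+K_2$ via the cutoff at $|x|=1$, with $K_1\in L^p(\R^3)$ for every $p<3/2$ and $K_2\in L^q(\R^3)$ for every $q>3/2$ (including $q=\infty$), by direct integration in spherical coordinates. Since each component $\mu_j$ is a finite signed Borel measure, Fubini's theorem (equivalently, Minkowski's integral inequality applied to convolution with a measure) yields $K_i*\mu_j\in L^{r_i}$ with norm controlled by $\|K_i\|_{r_i}\|\bs{\mu}\|_{TV}$, and at the same time produces absolute convergence a.e.\ of the defining integral.

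For the distributional identity $\Delta\Phi(\bs{\mu})=\div\bs{\mu}$, I would test against an arbitrary $\phi\in C_c^\infty(\R^3)$, write $\langle \phi,\Delta\Phi(\bs{\mu})\rangle=\int\Phi(\bs{\mu})\Delta\phi\,dx$, substitute the definition of $\Phi(\bs{\mu})$, and interchange the order of integration by Fubini (justified by the bound from the previous paragraph together with $\Delta\phi\in L^\infty$ of compact support). This reduces matters, for each fixed $y$, to the quantity $\grad_y\!\int\Delta\phi(x)/|x-y|\,dx$. Differentiating under the integral in $y$ and using that $-1/(4\pi|x|)$ is the fundamental solution of $\Delta$, the inner integral equals $-4\pi\phi(y)$, hence its gradient equals $-4\pi\grad\phi(y)$. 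Plugging back in yields $\langle\phi,\Delta\Phi(\bs{\mu})\rangle=-\!\int\grad\phi\cdot d\bs{\mu}=\langle\phi,\div\bs{\mu}\rangle$, as required.

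Divergence-freeness of $\mu_0(\bs{\mu}-\grad\Phi(\bs{\mu}))$ is then immediate from the Poisson--Hodge equation just proved. To verify that this distribution extends $\bb{b}(\bs{\mu})$ from $\R^3\setminus S$, I note that the distributional restriction of $\bs{\mu}$ to $\R^3\setminus S$ vanishes (since $\mathrm{supp}(\bs{\mu})\subset S$), so on $\R^3\setminus S$ the distribution reduces to $-\mu_0\grad\Phi(\bs{\mu})$, which coincides with the classical $\bb{b}(\bs{\mu})$ of \eqref{bDef1} by harmonicity (hence smoothness) of $\Phi(\bs{\mu})$ there.

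For the decay property, I would unfold the pairing as
\begin{equation*}
\langle T_\alpha\bb{f},\bb{b}(\bs{\mu})\rangle=\mu_0\!\int T_\alpha\bb{f}\cdot d\bs{\mu}+\mu_0\!\int\Phi(\bs{\mu})\,\div T_\alpha\bb{f}\,dx,
\end{equation*}
and treat the two terms separately. The first is bounded by $\|\bb{f}\|_\infty\,|\bs{\mu}|(\alpha+\mathrm{supp}\,\bb{f})$, which vanishes as $|\alpha|\to\infty$ because $|\bs{\mu}|$ is a finite Radon measure and $|\bs{\mu}|(\R^3\setminus \B(0,R))\to 0$. For the second, split $\Phi(\bs{\mu})=h_1+h_2$ with $h_i\in L^{r_i}$ as in the first paragraph; H\"older majorizes each contribution by $\|\div\bb{f}\|_{r_i'}\,\|h_i\chi_{\alpha+\mathrm{supp}\,\bb{f}}\|_{r_i}$, which tends to $0$ by dominated convergence. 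The only genuinely delicate step is the Fubini/differentiation-under-the-integral maneuver in the second paragraph, since the kernel is singular precisely on the support of $\bs{\mu}$; the $L^p+L^q$ control established in the first paragraph, combined with the compact support of the test function, is exactly what makes this step go through.
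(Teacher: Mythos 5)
Your proposal is correct and follows essentially the same route as the paper: splitting the kernel into near-field/far-field pieces to get the $L^p+L^q$ membership, reducing the distributional Poisson--Hodge equation to the fundamental-solution identity $\Delta(-1/4\pi|\cdot|)=\delta_0$ via Fubini, and handling the decay by the same finite-mass/integrability argument. The only cosmetic differences are that the paper uses a smooth cutoff rather than a sharp one and packages the fundamental-solution step through the auxiliary potential $G=N*\psi$ instead of differentiating under the integral directly; and for the final H\"older/dominated-convergence step you should (as the paper does) fix a finite $q$, since $\|h_2\chi_{\alpha+\mathrm{supp}\,\bb f}\|_\infty$ need not vanish when $q=\infty$.
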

\begin{rmk}  The decomposition $\bs{\mu}=\bb b(\bs{\mu})/\mu_0+\grad\Phi(\bs{\mu})$ is the Helmholtz-Hodge decomposition of the $\R^3$-valued measure
$\bs{\mu}$ into the sum of a 
gradient and a divergence-free term. Although $\bs{\mu}$ is a distribution 
of order 0, note that the summands will generally have order -1. 
Hereafter, we 
let $\Phi(\bs{\mu})$ and $\bb b(\bs{\mu})$ denote the extensions 
given in Proposition \ref{LocIntPhi}.
\end{rmk}

\begin{proof}
Set $\bb{k}(y)=y/(4\pi |y|^3)$ for $y\in\R^3$ and let $\phi\in C_c^\infty(\R^3)$ be valued in
	$[0,1]$, identically $1$ on $\B(0,1)$ and  $0$ outside of
	$\B(0,2)$. Writing $\bb{f}_1=\phi  \bb{k}$ and $\bb{f}_2=(1-\phi)\bb{k}$,
	we have that $|\bb{f}_1|\in L^p(\R^3)$ for $1\le p<3/2$ and $|\bb{f}_2|\in L^q(\R^3)$. From \eqref{PhiDef}, we  get that
	$\Phi(\bs{\mu})=\bb{k}*\bs{\mu}=\bb{f}_1*\bs{\mu}+\bb{f}_2*\bs{\mu}$ off $S$
	(the product under the convolution integral is here the scalar product).   
	For any $r\in[1,\infty]$, Jensen's inequality implies that
	the convolution of a finite signed measure with an $L^r$ function
	is an $L^r$ function with norm not exceeding the mass of the measure
	times the initial norm, and Fubini's theorem entails that the integrals converge absolutely a.e.   Therefore $\bb{f}_1*\bs{\mu}\in L^p(\R^3)$ and $\bb{f}_2*\bs{\mu}\in L^q(\R^3)$,
	showing that $\bb{k}*\bs{\mu}\in L^p(\R^3)+L^q(\R^3)$.	

	We next show   that
	\begin{equation}\label{DelPhi}
	\Delta\Phi(\bs{\mu})=\Delta(\bb{k}*\bs{\mu})=\div{\bs{\mu}}.
	\end{equation}   Let
	$N(y)=-1/(4\pi|y|)$ for $y\in\R^3$, 
	pick $\psi\in C_c^\infty(\R^3)$
	and recall that $G:=N*\psi$ is a smooth function vanishing at infinity
	such that $\Delta G=\psi$ \cite[Cor. 4.3.2\&4.5.4]{Gardiner}.
	Now, differentiating under the integral sign, we have that 
	$\bb{k}*\psi=\grad G$, therefore
	\[
	\begin{array}{ll}
	\langle\Delta\Phi(\bs{\mu}),\psi\rangle&=\langle \bb{k}*\bs{\mu},\Delta\psi\rangle
	=-\langle \bs{\mu},\bb{k}*\Delta\psi\rangle
	=-\langle \bs{\mu},\Delta(\bb{k}* \psi)\rangle
	=-\langle \bs{\mu},\Delta(\grad G)\rangle\\
	&=-\langle \bs{\mu},\grad\Delta G\rangle
	=\langle \div\bs{\mu},\Delta G\rangle
	=\langle \div\bs{\mu},\psi\rangle
	\end{array}
	\]
	which proves \eqref{DelPhi}.

	Finally, the distribution $\bb{d}:=\mu_0 (\bs{\mu}-\grad \Phi(\bs{\mu}))$ coincides with $\bb{b}(\bs{\mu})$ on $\R^3\setminus S$, by \eqref{bDef1}, and it follows from \eqref{DelPhi} that it satisfies
	$
	\div\bb{d}=\mu_0 (\div \bs{\mu}-\Delta \Phi(\bs{\mu}))=0$; i.e., $\bb{d}$ is divergence-free.  
	Renamimg $\bb{d}$ as $\bb{b}(\bs{\mu})$ since it extends the latter,
the finiteness of $\bs{\mu}$ and the fact that $\Phi(\bs\mu)\in L^1(\R^3)^3+L^2(\R^3)^3$  show:
	\begin{equation*}\langle T_\alpha \bb f,  \bb b(\bs{\mu})\rangle=\mu_0\left(\langle T_\alpha \bb f,  \bs{\mu}\rangle-\langle \div T_\alpha \bb f,   \Phi(\bs{\mu})\rangle\right)\to 0
	\end{equation*}
	as $|\alpha|\to \infty$, for every $\bb{f}\in (C_c^\infty(\R^3))^3$. 
\end{proof}

  \subsection{Divergence-free  and silent magnetizations}\label{secDivSilent}

The equation $\Delta\Phi(\bs{\mu})=\div{\bs{\mu}}$ is suggestive of the existence of a relationship between $S$-silent and divergence-free magnetizations.
As will be seen from the lemma below, for any closed set $S$ all divergence-free magnetizations supported on $S$ are $S$-silent, but the converse is not always true as follows from the next construction.

\begin{exa}\label{balldipole}
	Let $S=\mathbb{B}(0,1)$ be the closed unit Euclidean ball centered at the origin,
	$\mu=\mathcal{L}_3\mathcal{b}S$, and $\bs{\mu}\in\mathcal M(S)^3$   the $\R^3$-valued measure equal to $(4\pi/3)^{-1}\mu e_1$.	Then, by the mean value theorem, we get that
	$$
	\Phi(\bs{\mu})(x)=\frac{1}{4\pi}\int\grad_y\ \frac{1}{|x-y|}\cdot d\bs{\mu}(y)\\
	=\frac{1}{4\pi}\frac{x_1}{|x|^3},\qquad x\notin S,
	$$
	since $\frac{1}{4\pi}\frac{x_1}{|x|^3}$ is harmonic on $\R^3\setminus\{0\}$.
	Note that $\frac{1}{4\pi}\frac{x_1}{|x|^3}$ is also the magnetic potential generated by the dipole $\bs{\nu}:=\delta_0e_1$; therefore $\bs{\mu}$ and $\bs{\nu}$ are $S$-equivalent, that is $\bs{\mu}-\bs\nu$ is $S$-silent.
	However, this magnetization is not divergence-free since, for every $f\in   C_c^\infty(\R^3) $ supported in $\B(0,1)$, it holds that
$\langle  f, \div(\bs{\mu}-\bs{\nu})\rangle=-\langle f,\div\bs{\nu}\rangle=-\partial_{x_1} f(0)$.
	
	An analogous argument shows that, for $a_3$ the area of $\sph(0,1)$ and $\tilde\mu:=\mathcal{H}_2\mathcal{b}\sph(0,1)$, the $\R^3$-valued measure
$a_3^{-1}\tilde\mu e_1$ is likewise $S$-equivalent to $\bs\nu$.

The following variant of this example is also instructive: there is a
sequence $x_n\in \mathbb{B}(0,1)$ and a sequence $c_n$ of real numbers
with $\sum_n|c_n|<\infty$ such that the measure $\bs{\alpha}$ defined by
$\bs{\alpha}=e_1\sum_nc_n\delta_{x_n}$
is $S$-equivalent to $a_3^{-1}\tilde\mu e_1$ which is $S$-equivalent to $\bs\nu$.  Therefore 
$\bs{\alpha}-\bs{\nu}$ is an 
$S$-silent magnetization consisting of countably many point dipoles.
To see that $x_n$ and $c_n$ exist, recall Bonsall's theorem
(whose proof in the ball is the same as in the disk, see
\cite[Thms. 5.21 \& 5.22]{Rudin}) that 
whenever $x_n$ is a sequence in
$\mathbb{B}(0,1)$ which is nontangentially dense in $\sph(0,1)$,
each function $h\in L^1(\tilde\mu)$
can be written as $h(\xi)=\sum_n c_n P_{x_n}(\xi)$ 
where 
$P_{x_n}(\xi)=(1/4\pi)(1-|x_n|^2)/|\xi-x_n|^3$ is the familiar Poisson kernel 
of the unit ball at $x_n$, and $c_n$ is a sequence of real numbers with absolutely 
convergent sum. Choosing $h\equiv1$ and observing that, for $y\notin S$, 
\[
\frac{y-x_n}{|y-x_n|^3}=\int\frac{y-\xi}{|y-\xi|^3}P_{x_n}(\xi)\,
d\tilde\mu(\xi)
\]
by the Poisson representation of harmonic functions, we easily check that
$e_1\sum_nc_n\delta_{x_n}$
is $S$ equivalent to $a_3^{-1}\tilde\mu e_1$, as desired.
\end{exa}

Example \ref{balldipole} shows us that a $\R^3$-valued measure on $S$ which
is $S$-silent  needs not be divergence-free in general. It does, however, when $S$ is slender:

\begin{theorem}\label{divM}
Let $S\in\R^3$ be  closed and $\bs\mu\in\mathcal{M}(S)^3$.
If $\div\bs\mu=0$, then $\bs\mu$ is $S$-silent.
Furthermore, if $S$ is a slender set and  $\bs\mu$ is $S$-silent, then $\div\bs\mu=0$.
\end{theorem}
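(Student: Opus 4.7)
I would reduce both directions to vanishing of the scalar potential $\Phi(\bs{\mu})$, using Proposition~\ref{LocIntPhi}, which gives $\Delta\Phi(\bs{\mu})=\div\bs{\mu}$ globally and, on choosing the admissible pair $p=1$, $q=2$, provides a decomposition $\Phi(\bs{\mu})=f_1+f_2$ with $f_1\in L^1(\R^3)$ and $f_2\in L^2(\R^3)$.

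For the direction ``divergence-free implies $S$-silent'', assume $\div\bs{\mu}=0$. Then $\Delta\Phi(\bs{\mu})=0$ distributionally, so by Weyl's lemma $\Phi(\bs{\mu})$ is (a.e.\ equal to) a harmonic function on $\R^3$. The mean value property on $\B(x,r)$ combined with H\"older's inequality yields
\[
|\Phi(\bs{\mu})(x)|\le\frac{3}{4\pi r^3}\|f_1\|_{L^1}+\left(\frac{3}{4\pi r^3}\right)^{1/2}\|f_2\|_{L^2}
\]
for every $r>0$, and letting $r\to\infty$ forces $\Phi(\bs{\mu})\equiv 0$. Then $\bb{b}(\bs{\mu})=-\mu_0\grad\Phi(\bs{\mu})=0$ on $\R^3\setminus S$ by \eqref{bDef1}, i.e., $\bs{\mu}$ is $S$-silent.

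For the converse, assume $S$ is slender and $\bs{\mu}$ is $S$-silent. Since $\bs{\mu}$ is supported in $S$, $\Phi(\bs{\mu})$ is real-analytic and harmonic on the open set $\R^3\setminus S$, and silence translates to $\grad\Phi(\bs{\mu})=0$ there, so $\Phi(\bs{\mu})$ is a constant $c_C$ on each connected component $C$ of $\R^3\setminus S$. By slenderness $\mathcal{L}_3(C)=\infty$; since $\Phi(\bs{\mu})|_C\in L^1(C)+L^2(C)$, a Chebyshev argument forces $c_C=0$: if $c=g_1+g_2$ on $C$ with $g_1\in L^1(C)$, $g_2\in L^2(C)$ and $c\ne 0$, then $\mathcal{L}_3\{|g_2|\ge|c|/2\}\le 4\|g_2\|_{L^2}^2/c^2<\infty$, so $|g_1|\ge|c|/2$ on a set of infinite measure, contradicting $g_1\in L^1(C)$. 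Thus $\Phi(\bs{\mu})=0$ on $\R^3\setminus S$, hence a.e.\ on $\R^3$ since slenderness also gives $\mathcal{L}_3(S)=0$. Proposition~\ref{LocIntPhi} then yields $\div\bs{\mu}=\Delta\Phi(\bs{\mu})=0$.

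\emph{Main obstacle.} Both clauses defining ``slender'' are used nontrivially in the converse: the condition $\mathcal{L}_3(S)=0$ is needed to promote vanishing off $S$ to vanishing a.e., while the infinite-volume condition on each component is precisely what rules out the obstruction of Example~\ref{balldipole}, where a bounded component of $\R^3\setminus S$ could carry a nonzero constant potential and thereby allow a non-divergence-free $S$-silent magnetization. The crux of the argument is therefore the elementary integrability estimate showing that a nonzero constant cannot live in $L^1+L^2$ on a set of infinite Lebesgue measure.
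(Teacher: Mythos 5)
Your proof is correct and follows essentially the same route as the paper: both reduce to showing the extended potential $\Phi(\bs{\mu})$ vanishes by exploiting the $L^1+L^2$ integrability supplied by Proposition~\ref{LocIntPhi}, using a mean-value/growth argument for the first implication and the infinite-volume part of slenderness to kill the constants for the converse. The only cosmetic difference is that you rule out a nonzero constant on an infinite-volume component by a Chebyshev argument, whereas the paper derives the inequality $\int_E|\Phi(\bs\mu)|\,d\mathcal{L}_3\leq C_1+C_2\,\mathcal{L}_3^{1/2}(E)$ once and lets $\mathcal{L}_3(E)\to\infty$; the two are interchangeable.
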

\begin{proof}
	Since $\Phi(\bs\mu)\in L^1(\R^3)^3+L^2(\R^3)^3$ by Proposition~\ref{LocIntPhi},
we get  from the Schwarz inequality:
\begin{equation}
\label{inegPhig}
\int_E|\Phi(\bs\mu)|d\mathcal{L}_3\leq C_1+C_2\,\mathcal{L}_3^{1/2}(E)
\end{equation}
for some constants $C_1$,  $C_2$ and each Borel set $E$ of finite measure. 
If $\div\bs\mu=0$, then   $\Phi(\bs\mu)$ is harmonic 
on $\R^3$ by the same lemma, therefore it is constant  by
\eqref{inegPhig} and the mean value theorem 
(see proof of \cite[Thm. 2.1]{ABR}). 
Consequently $\bs{\mu}$ is $S$-silent.

For the second statement,  assume that $S$ is a  slender set  and that $\bs\mu\in\mathcal{M}(S)^3$ is $S$-silent.
Since $\bb b(\bs\mu)=\grad \Phi(\bs\mu)$ and $ \bs\mu$ is $S$-silent, then $\Phi(\bs\mu)$ is constant on each connected component of $\R^3\setminus S$. 
If $\mathfrak{C}$ is such a component, we can apply \eqref{inegPhig} with
$E=\mathfrak{C}\cap B(0,n)$ and let $n\to\infty$ to conclude that the 
corresponding constant is zero,
because $\mathcal{L}_3(\mathfrak{C})=+\infty$. 
Hence, $\Phi(\bs{\mu})$ must be zero on $\R^3\setminus S$,
and since $\mathcal{L}_3(S)=0$ it follows that  $\Phi(\bs{\mu})$ is zero as a distribution, so that $\div\bs\mu=\Delta\Phi(\bs\mu)=0$.
\end{proof}

In typical  Scanning Magnetic Microscopy experiments, data consists of 
point-wise values of one component of the magnetic field taken on a plane not 
intersecting $S$. Of course, finitely many values do not
characterize the field, but it is natural to ask how one can 
choose  the measurement points to ensure that infinitely many of them would,
in the limit, determine $\bb{b}(\bs{\mu})$  uniquely.
We next provide a sufficient condition that such data (more generally,
data measured  on an
analytic surface which needs not be a plane)
determines the field in the complement of $S$.  The condition dwells on the
remark that a nonzero real analytic function on a connected
open subset of $\R^k$ 
has a zero set of Hausdorff dimension at most $k-1$. It is so because 
the zero set is locally a countable union of smooth (even real-analytic)
embedded submanifolds of strictly positive 
codimension, see \cite[thm 5.2.3]{KP}. This fact 
sharpens the property that the zero set of a nonzero 
real analytic function  in $\R^k$ has Lebesgue measure zero,
and will be used at 
places in the paper.  Using
local coordinates, it is immediately checked that the previous bound
on the Hausdorff dimension 
 remains valid when $\R^k$ is replaced by a smooth real-analytic 
manifold embedded in $\R^m$ for some $m>k$. 

We also need at this point a version of the Jordan-Brouwer separation theorem
for a connected, properly embedded  ({\it i.e.} 
complete but not necessarily compact) surface in $\R^3$, to the effect that 
the complement of such a surface has two connected components.
In the smooth case which is our concern here,
we give in Appendix \ref{app1} a short, differential topological argument 
for this result which we assume is known but for which we could not 
find a published reference. 

Hereafter, we will say that $E,F\subset\R^3$ are positively separated from each 
other if $\textrm{dist}(E,F)>0$, where $\textrm{dist}(E,F):=\inf_{x\in E,y\in F}|x-y|$.

\begin{lemma}
\label{Lem4} Let $S\subset \R^3$ be closed and suppose $\R^3\setminus S$ 
is connected and   contains a nonempty open half-cylinder of direction 
$v\in \R^3\setminus \{0\}$.  Furthermore, let $\mathcal{A} $ be a smooth
complete  and connected real analytic surface in $\R^3\setminus S$ that is positively separated from $S$ and such that $S$ lies entirely
within one of the two connected components of $\R^3\setminus \mathcal{A}$.
Let also 
$Q\subset\R^3\setminus S$ be such that the 
closure of  $Q\cap \mathcal{A}$  has 
Hausdorff dimension strictly greater than 1.
If $\bs{\mu}\in \mathcal{M}(S)^3$ is such that $v\cdot \bb{b}(\bs{\mu})$ 
vanishes on 
$Q\cap \mathcal{A}$,  then $\bs{\mu}$ is $S$-silent.
\end{lemma}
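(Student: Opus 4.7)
The plan is to progressively widen the zero set of $g := v\cdot\bb{b}(\bs{\mu})$: from the set $Q\cap\mathcal{A}$ to all of $\mathcal{A}$, then to an open domain in $\R^3$, then to $\R^3\setminus S$, and finally to promote $g\equiv 0$ into $\bb{b}(\bs{\mu})\equiv 0$ by means of the half-cylinder in direction $v$.

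Since $\bb{b}(\bs{\mu})$ is both curl- and divergence-free on $\R^3\setminus S$ (by Proposition~\ref{LocIntPhi} and \eqref{bDef1}), its components are harmonic, hence real-analytic, on $\R^3\setminus S$, and $g|_{\mathcal{A}}$ is real-analytic on the connected $2$-dimensional real-analytic manifold $\mathcal{A}$. By the Hausdorff-dimension bound recalled just before the lemma, the zero set of a nonzero real-analytic function on $\mathcal{A}$ has Hausdorff dimension at most $1$; since that zero set is closed and contains $\overline{Q\cap\mathcal{A}}$, whose dimension strictly exceeds $1$, we conclude $g\equiv 0$ on $\mathcal{A}$.

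Let $V$ be the connected component of $\R^3\setminus\mathcal{A}$ that does not contain $S$; by the Jordan--Brouwer type statement cited before the lemma, $\partial V=\mathcal{A}$. The assumption $\delta:=\textrm{dist}(\mathcal{A},S)>0$, combined with the observation that any segment joining a point of $V$ to a point of $S$ must cross $\mathcal{A}$, yields $\textrm{dist}(V,S)\geq\delta$. Thus $g$ is harmonic on $V$, vanishes continuously on $\partial V=\mathcal{A}$, and satisfies $g(x)\to 0$ as $|x|\to\infty$ in $V$: splitting $\bs{\mu}$ into a compactly supported piece (whose contribution via \eqref{bDef1} is $O(1/|x|^3)$) plus a remainder of arbitrarily small total variation (whose contribution is bounded by $C/\delta^3$ times this small mass) gives the decay. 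The usual exhaustion form of the maximum principle on the unbounded domain $V$ then gives $g\equiv 0$ on $V$. Since $g$ is real-analytic on the connected open set $\R^3\setminus S$ and vanishes on the nonempty open subset $V\subset\R^3\setminus S$, analytic continuation forces $g\equiv 0$ on all of $\R^3\setminus S$.

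Finally, $\bb{b}(\bs{\mu})=-\mu_0\grad\Phi(\bs{\mu})$ on $\R^3\setminus S$ by \eqref{bDef1}, so $g\equiv 0$ reads $v\cdot\grad\Phi(\bs{\mu})\equiv 0$: the potential $\Phi(\bs{\mu})$ is constant along every line segment of direction $v$ lying in $\R^3\setminus S$. Shrinking the given half-cylinder slightly yields an infinite half-tube $T\subset\R^3\setminus S$ whose axis stays at a fixed positive distance from $S$; the same splitting argument applied to \eqref{PhiDef} gives $\Phi(\bs{\mu})\to 0$ along this axis at infinity, so $\Phi(\bs{\mu})$ vanishes identically on $T$. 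One more application of analytic continuation on the connected open set $\R^3\setminus S$ yields $\Phi(\bs{\mu})\equiv 0$ there, whence $\bb{b}(\bs{\mu})=-\mu_0\grad\Phi(\bs{\mu})\equiv 0$ on $\R^3\setminus S$, i.e., $\bs{\mu}$ is $S$-silent. The main obstacle is the third paragraph: converting vanishing on a $2$-dimensional surface into vanishing on an open domain requires both the Jordan--Brouwer separation (to produce a bona fide domain with boundary $\mathcal{A}$) and the maximum principle on the unbounded domain $V$, which in turn relies on the far-field decay of $\bb{b}(\bs{\mu})$ away from $S$.
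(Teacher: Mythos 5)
Your proposal follows the same four-step strategy as the paper's proof: (i) analyticity plus the Hausdorff-dimension bound to pass from $Q\cap\mathcal{A}$ to all of $\mathcal{A}$; (ii) Jordan--Brouwer to isolate the component $\mathcal{U}$ away from $S$, then the truncation estimate $\bs{\mu}=\bs{\mu}_R+\tilde{\bs{\mu}}_R$ with \eqref{eqn:b_bound} to get far-field decay, and the maximum principle plus analytic continuation to get $v\cdot\bb{b}(\bs{\mu})\equiv 0$ on $\R^3\setminus S$; (iii) conclude $\Phi(\bs{\mu})$ is constant on $v$-parallel segments; (iv) use the half-cylinder and \eqref{eqn:Phi_bound} to force $\Phi(\bs{\mu})\equiv 0$, hence $\bb{b}(\bs{\mu})\equiv 0$. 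This matches the paper's proof step by step, and the details (distance bound for $\mathcal{U}$, splitting of $\bs{\mu}$, shrinking the cylinder) are handled correctly.
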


\begin{proof}
Suppose $\bs{\mu}\in \mathcal{M}(S)^3$ is such that $v\cdot \bb{b}(\bs{\mu})$ vanishes on $Q\cap\mathcal{A}$.
 As $v\cdot\bb{b}(\bs{\mu})$ is harmonic in $\R^3\setminus S$ it is 
real-analytic  there, and since $v\cdot\bb{b}(\bs{\mu})$  vanishes on the 
closure of   $Q\cap\mathcal{A}$  which has Hausdorff dimension $>1$  
it must vanish identically on $\mathcal{A}$. 

Observe now that $\R^3\setminus\mathcal{A}$ has two connected components 
(see Theorem~\ref{JB}), and let  $\mathcal{U}$ 
be the one not containing $S$. 
%
%
 Note, using \eqref{bDef1}, that if $\bs{\nu}\in \mathcal{M}(S)^3$ and $x \notin \text{supp}(\bs{\nu})$, then,
 \begin{align}\label{eqn:b_bound}
 	|\bb{b}(\bs{\nu})(x)| \le 4c \left(\text{dist}(x, \text{supp}(\bs{\nu}))\right)^{-3} \|\bs{\nu}\|_{TV}.
 \end{align}
 For $R>0$ let $\bs{\mu}_R:=\bs{\mu}\mathcal{b}B(0,R)$ and $\tilde{\bs{\mu}}_R:=\bs{\mu}-\bs{\mu}_R$.  Then
 $$\bb{b}(\bs{\mu})(x)=\bb{b}(\bs{\mu_R})(x)+\bb{b}(\tilde{\bs{\mu}}_R)(x),$$
and applying \eqref{eqn:b_bound} to $\bs{\mu}_R$ and $\tilde{\bs{\mu}}_R$ for $R$
large enough, using that $\mathcal{A}$ is positively separated from $S$,
we get that  $\limsup_{x\in U,|x|\to\infty}|\bb{b}(\bs{\mu})(x)|<\varepsilon$
for any $\varepsilon>0$, hence 
$\bb{b}(\bs{\mu})(x)\to 0$ as $x\to \infty$  in $\mathcal{U}$.
Since $v\cdot \bb{b}(\bs{\mu})$ vanishes on the boundary of $\mathcal{U}$,   we may use  the maximum principle to conclude  that $v\cdot \bb{b}(\bs{\mu})$ vanishes on $\mathcal{U}$ and therefore on $\R^3\setminus S$ as  the complement of $S$  is connected.

This implies that the magnetic potential $\Phi(\bs{\mu})$ is constant 
on every line segment parallel to $v$  not intersecting $S$. 
Now, $\R^3\setminus S$ contains a half-cylinder $\mathcal{C}$
of  direction $v$, 
and shrinking the latter if necessary we may assume it is positively 
separated from $S$. From \eqref{PhiDef} we get 
 \begin{align}\label{eqn:Phi_bound}
 	|\Phi(\bs{\nu})(x)| \le \frac{1}{4\pi} \left(\text{dist}(x, \text{supp}(\bs{\nu}))\right)^{-2} \|\bs{\nu}\|_{TV}
 \end{align}
 for $\bs{\nu}\in \mathcal{M}(S)^3$ and $x \notin \text{supp}(\bs{\nu})$, and we conclude that 
$\Phi(\bs{\mu})(x)$ 
goes to zero as $x\to \infty$ in $\mathcal{C}$.  Hence its value on each
half line contained in $\mathcal{C}$ is zero,
 so $\Phi(\bs{\mu})\equiv0$ 
in $\mathcal{C}$. Consequently it  vanishes identically
(thus also $\bb{b}(\bs{\mu})$) in  the connected open set
$\R^3\setminus S$, by real analyticity.
\end{proof}


The following example shows that  
Lemma~\ref{Lem4} needs not hold if 
$S$ is not contained in a single component of $\R^3\setminus \mathcal{A}$
or if $\mathcal{A}$ fails to be analytic.

\begin{exa}
	Let $S$ be equal to $\{e_3,-e_3\}$, $\bs{\mu}=(\delta_{e_3}+\delta_{-e_3})e_2$, $v=e_3$, and $\mathcal{A}=\{x_3=0\}$. 
	Then   $e_3\cdot\bs{b}(\bs\mu)$ is zero on $\mathcal{A}$
 but $\bs{\mu}$ is not $S$-silent.
Also, whenever $Q$ is a bounded subset of $\{x_3=0\}$ with
$\textrm{dim}_{\mathcal{H}} Q>1$,  
there is a closed $C^\infty$-smooth  surface $Z$ containing $Q$
such that $e_3$ and $-e_3$ lie in the same component of 
$\R^3\setminus Z$;
however, $Z$ cannot be analytic.
\end{exa}
Lemma~\ref{Lem4} does not hold either if 
$\R^3\setminus S$ is disconnected: for instance, if $S=\{x_3=0\}$, there is
a $\bs{\mu}$ of the form $\bb{h}d\mathcal{L}_2$ with $\bb{h}\in(\mathfrak{h}^1(\R^2))^3$, where $\mathfrak{h}^1(\R^2)$ is the real Hardy space on $\R^2$,
such that $\bb{b}(\bs{\mu})$ vanishes identically for $\{x_3<0\}$  but
not for $\{x_3>0\}$ \cite[Thm. 3.2]{IMP}. In particular, if we let $Q=\mathcal{A}=\{x_3=-1\}$, 
we have that $\bb{b}(\bs{\mu})$ vanishes on $Q$ but $\bs{\mu}$ is not 
$S$-silent.
However, if $\R^3\setminus S$ is not connected but has a connected component  $\mathcal{V}$ containing  a half-cylinder, then  by replacing $S$ with $\tilde S:=\R^3\setminus \mathcal{V}$ and selecting an appropriate $\mathcal{A}$, $Q$ and $v$, 
Lemma~\ref{Lem4} may be applied to the effect that $\bs{\mu}$ is $\tilde S$-silent whenever $v\cdot \bb{b}(\bs{\mu})$ vanishes on $Q$.  Thus, 
if each component $\mathcal{V}_i$ of $\R^3\setminus S$ contains a half-cylinder  and can be associated with suitable $\mathcal{A}_i$, $Q_i$ and $v_i$, and if $v_i\cdot \bb{b}(\bs{\mu})$ vanishes on $Q_i$ for all $i$, then $\bs{\mu}$ is $S$-silent. 

\begin{rmk}
If $S$ is as Lemma \ref{Lem4} and $Q\subset\R^3$ 
is positively separated from $S$
and has closure $\bar{Q}$ of Hausdorff dimension
$>2$, then the conclusion 
of the lemma still holds 
 and the proof is easier. In this case indeed, it follows 
directly from the hypothesis on $Q$ that
$v\cdot\bb{b}(\bs{\mu})$ is 
identically zero in  $\R^3\setminus S$ as soon as it vanishes
on $Q$, and the rest of the proof is as 
before. We shall not investigate 
this situation which, from the point of view of inverse problems, 
corresponds to the case where measurements of the 
field are taken in a volume rather than on a surface. 
Though more information can be gained this way, the experimental
and computational burden often becomes deterring.

\end{rmk}

\begin{lemma} \label{components} Let $S= S_0\cup S_1\subset \R^3$  for some disjoint closed sets $S_0$ and $S_1$. If
$\bs{\mu}\in\mathcal{M}(S)^3$ is $S$-silent, then for $i=0,1$  the restriction  $\bs{\mu}\mathcal{b}S_i$   is 
$S_i$-silent.  
\end{lemma}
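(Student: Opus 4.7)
The plan is to show $\bs{\nu}_0:=\bs{\mu}\mathcal{b}S_0$ is $S_0$-silent; by symmetry, the same argument with the labels $0,1$ interchanged handles $\bs{\nu}_1:=\bs{\mu}\mathcal{b}S_1$. Write $\bs{\mu}=\bs{\nu}_0+\bs{\nu}_1$. My first step is to glue the two local magnetic fields into a single global harmonic one. By \eqref{bDef1}, $\bb{b}(\bs{\nu}_0)$ is harmonic and real-analytic on the open set $\R^3\setminus S_0$, and $\bb{b}(\bs{\nu}_1)$ is harmonic on $\R^3\setminus S_1$. On the intersection $\R^3\setminus S=(\R^3\setminus S_0)\cap(\R^3\setminus S_1)$, the identity $\bb{b}(\bs{\mu})=\bb{b}(\bs{\nu}_0)+\bb{b}(\bs{\nu}_1)$ combined with the $S$-silence hypothesis forces $\bb{b}(\bs{\nu}_0)=-\bb{b}(\bs{\nu}_1)$. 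Because $S_0\cap S_1=\emptyset$ implies $(\R^3\setminus S_0)\cup(\R^3\setminus S_1)=\R^3$, the two agreeing real-analytic fields glue along $\R^3\setminus S$ to an entire harmonic vector field $\bb{g}:\R^3\to\R^3$ with $\bb{g}=\bb{b}(\bs{\nu}_0)$ on $\R^3\setminus S_0$ and $\bb{g}=-\bb{b}(\bs{\nu}_1)$ on $\R^3\setminus S_1$.

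The second step, which is the heart of the proof, is to show $\bb{g}\equiv 0$ by a Liouville-type argument exploiting the decay property from Proposition \ref{LocIntPhi}. Fix a radially symmetric $\phi\in C_c^\infty(\R^3)$ with $\int\phi=1$ and set $\bb{f}_j:=\phi\,\bb{e}_j$. Applied component-wise to the harmonic field $\bb{g}$, the mean-value property yields the exact identity $\langle T_\alpha\bb{f}_j,\bb{g}\rangle=g_j(\alpha)$ for every $\alpha\in\R^3$. For $|\alpha|$ large enough that $\mathrm{supp}(T_\alpha\bb{f}_j)$ lies in $\R^3\setminus S_0$ (resp.\ in $\R^3\setminus S_1$), this pairing equals $\langle T_\alpha\bb{f}_j,\bb{b}(\bs{\nu}_0)\rangle$ (resp.\ $-\langle T_\alpha\bb{f}_j,\bb{b}(\bs{\nu}_1)\rangle$), and hence tends to $0$ by Proposition \ref{LocIntPhi}. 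Combining the two cases along any sequence $|\alpha|\to\infty$ gives $g_j(\alpha)\to 0$; each $g_j$ is therefore an entire harmonic function vanishing at infinity, so $g_j\equiv 0$ by Liouville's theorem. Restricting back to $\R^3\setminus S_0$ yields $\bb{b}(\bs{\nu}_0)=\bb{g}|_{\R^3\setminus S_0}=0$, i.e., $\bs{\mu}\mathcal{b}S_0$ is $S_0$-silent.

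The main obstacle is in that second step, specifically in arranging that $\mathrm{supp}(T_\alpha\bb{f}_j)$ can be placed inside one of $\R^3\setminus S_0,\R^3\setminus S_1$ along every sequence $|\alpha|\to\infty$. This is immediate if at least one of $S_0,S_1$ is bounded, but in the general case where both are unbounded and approach one another at infinity, one must split each translated test function by means of a smooth cutoff $\chi$ with $\chi\equiv 1$ on a neighborhood of $S_0$ and $\chi\equiv 0$ on a neighborhood of $S_1$—such a $\chi$ exists because $S_0$ and $S_1$ are disjoint closed subsets of the normal manifold $\R^3$—and then apply the decay of Proposition \ref{LocIntPhi} to each piece separately, using the $L^p+L^q$ integrability of $\Phi(\bs{\nu}_i)$ to rule out any nonzero polynomial growth in a scalar potential of $\bb{g}$ on an unbounded component of $\R^3\setminus S_0$ of infinite Lebesgue measure (at least one such component exists since one of $\R^3\setminus S_i$ must have infinite measure).
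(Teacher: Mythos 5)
Your proof is essentially the paper's: both glue $\bb b(\bs{\mu}\mathcal{b}S_0)$ on $\R^3\setminus S_0$ with $-\bb b(\bs{\mu}\mathcal{b}S_1)$ on $\R^3\setminus S_1$ into a global harmonic vector field (your $\bb g$, the paper's $\widetilde{\bb b}$), both use the decay $\langle T_\alpha\bb f, \bb b(\bs{\nu}_i)\rangle\to 0$ from Proposition~\ref{LocIntPhi} together with the mean-value property to show the glued field vanishes at infinity, and both conclude by Liouville.

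The obstacle you flag in your last paragraph is genuine, and the paper does not explicitly dispel it either. If $S_0$ and $S_1$ are both unbounded with $\mathrm{dist}(S_0,S_1)=0$ (e.g. two exponentially close graphs escaping to infinity), a ball $\B(\alpha,1)$ can meet both sets for $|\alpha|$ arbitrarily large. In that regime the paper's display~\eqref{components:eq1} is not a formal triangle inequality: writing $\bb b_0+\bb b_1=\bb D_0+\bb D_1$ with $\bb D_i$ the part supported on $S_i$, one checks that $\widetilde{\bb b}=\bb b_0-\bb D_0=-\bb b_1+\bb D_1$ as distributions, and this gives $\langle T_\alpha\bb f,\widetilde{\bb b}\rangle=\langle T_\alpha\bb f,\bb b_0\rangle$ or $=-\langle T_\alpha\bb f,\bb b_1\rangle$ \emph{only when} $\B(\alpha,1)$ misses $S_0$ or $S_1$ respectively; otherwise the term $\langle T_\alpha\bb f,\bb D_0\rangle$ does not drop out. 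Your cutoff $\chi$ is the natural device, but it does not by itself close the gap: computing $\langle (1-\chi)T_\alpha\bb f,\bb b_0\rangle$ via the formula $\bb b_0=\mu_0(\bs{\nu}_0-\grad\Phi(\bs{\nu}_0))$ produces the term $\langle \grad\chi\cdot T_\alpha\bb f,\,\Phi(\bs{\nu}_0)\rangle$, and when $\mathrm{dist}(S_0,S_1)=0$ any admissible $\chi$ must have unbounded gradient, so the $L^p+L^q$ membership of $\Phi(\bs{\nu}_0)$ does not control that term without additional work. Your alternative sketch---passing to a scalar potential of $\bb g$ on an infinite-measure component of $\R^3\setminus S_0$, in the spirit of the proof of Theorem~\ref{divM}---is a reasonable direction, but an infinite-measure component need not contain arbitrarily large balls, so the usual $L^1$-average gradient estimate for harmonic functions does not apply directly. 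In short: same strategy as the paper, same known-unknown; you correctly located the delicate step, but neither of your two proposed repairs is carried through.
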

 
\begin{proof}
Let $\bb b_0=\bb b(\bs{\mu}\mathcal{b}S_0)$ and $\bb b_1=\bb b(\bs{\mu}\mathcal{b}S_1)$.
Note that $\bb b_0$ and $\bb b_1$ are harmonic in $\R^3\setminus S_0$ and $\R^3\setminus S_1$ respectively.  
Also, as  $\bs{\mu}$ is $S$-silent, it holds that $\bb{b}_0(x)=-\bb{b}_1(x)$ for
$x\not \in S$.
Hence the function
$$\widetilde{\bb b} (x)=\begin{cases} \bb b_0(x) & x\in \R^3\setminus S_0,\\
-\bb b_1(x) & x\in \R^3\setminus S_1,
\end{cases}
$$
is harmonic on $\R^3$ (note that the two definitions agree on
$\R^3\setminus S$). 
Moreover, Proposition~\ref{LocIntPhi} implies that for every $\bb f\in (C_c^\infty(\R^3))^3$:
\begin{equation}\label{components:eq1}
|\langle T_\alpha \bb f,\widetilde{\bb b}\rangle| \leq |\langle T_\alpha \bb f, \bb b_0\rangle| + |\langle T_\alpha \bb f, \bb b_1\rangle| \rightarrow 0 \qquad \text{ as }\qquad |\alpha|\rightarrow\infty.
\end{equation}
Since $\widetilde{\bb b}$ is harmonic, the mean value property applied to \eqref{components:eq1} with  $\bb f=\varphi e_j$, where $\varphi$ is a
non-negative radially symmetric smooth function with support $\overline{\B}(0,1)$,  implies that  $\widetilde{\bb b}(x)$ vanishes as $x\to \infty$ and
therefore  is identically 0 by   Liouville's  theorem. 
Thus both $\bb b_0$ and $\bb b_1$ are zero on $\R^3\setminus S_0$ and $\R^3\setminus S_1$ respectively, and hence $\bs{\mu}\mathcal{b}S_0$ is $S_0$-silent and $\bs{\mu}\mathcal{b}S_1$ is $S_1$-silent.
\end{proof}

\subsection{Decomposition of divergence-free magnetizations and recovery of magnetizations with sparse support}\label{sparse3D:Sec}
A set $E\subset\R^2$ is said to be {\em 1-rectifiable} (e.g., see \cite[Def. 15.3]{Mattila}) if there exist Lipschitz maps $f_i:\R\to\R^n$, $i=1,2,...,$ such that
$$
\mathcal{H}_1\left(E\setminus\bigcup_{i=1}^\infty f_i(\R)\right)=0.
$$
A set $B\subset \R^n$ is {\em purely 1-unrectifiable} if $\mathcal{H}_1(E\cap B)=0$ for every 1-rectifiable set $E$.  Clearly a set of $\mathcal{H}_1$-measure zero is purely 1-unrectifiable. A purely 2-unrectifiable set is defined in the same way, only with $\mathcal{H}_2$ instead of $\mathcal{H}_1$. 

We call a Lipschitz mapping  $\bs{\gamma}:[0,\ell]\to \R^3$  a {\em  rectifiable curve} and let $\Gamma:=\bs{\gamma}([0,\ell])$ denote its image.
If  $\bs{\gamma}$   is an arclength parametrization of $\Gamma$; i.e., if $\bs{\gamma}$ satisfies 
\begin{equation}\label{arc}
\mathcal{H}_1(\bs{\gamma}([\alpha,\beta]))=\beta-\alpha, \qquad \forall[\alpha,\beta]\subset [0,\ell],
\end{equation}
then we call $\bs{\gamma}$  
  an {\em oriented rectifiable curve}. 
  By   Rademacher's Theorem (see \cite{EvaGar1992}),   $\bs{\gamma}$ is differentiable a.e. on $[0,\ell]$.  Furthermore, it follows from \eqref{arc} that $|\bs{\gamma}'(t)|=1$ a.e. on $[0,\ell]$.   For a given oriented rectifiable curve $ \bs{\gamma}$ we define $\mathbf{R}_{\bs{\gamma}}\in \mathcal{M}(S)^3$
  through the relation
\begin{equation}
\langle \mathbf{R}_{\bs{\gamma}},\mathbf{f}\rangle=\int_0^\ell \mathbf{f}(\bs{\gamma}(t))\cdot\bs{\gamma}'(t) dt,
\end{equation}
for $\mathbf{f}\in C_0(\R^3)^3$.   Alternatively, since $\mathbf{R}_{\bs{\gamma}}$ is absolutely continuous with respect to $\mathcal{H}_1$ we may consider the Radon-Nikodym derivative $\bs{\tau}$ of $\mathbf{R}_{\bs{\gamma}}$ with respect to
$\mathcal{H}_1$ and we remark that   $\bs{\tau}(\bs{\gamma}(t))=\bs{\gamma}'(t)$ for a.e. $t\in[0,\ell]$. Then, for a Borel set $B\subset \R^3$ we have
\begin{equation}\label{Rgamma}
 \mathbf{R}_{\bs{\gamma}}(B)=\int_B \bs{\tau} \, d (\mathcal{H}_1\mathcal{b}\Gamma ).
\end{equation}
We remark that if $B$ is purely 1-unrectifiable, then  $|\mathbf{R}_{\bs{\gamma}}|(B)=\mathcal{H}_1(B\cap\Gamma)=0$ and, furthermore,  Fubini's Theorem 
implies $\mathcal{L}_3(B)=0$.

  Let $\mathcal{C}\subset \mathcal{M}(S)^3$
denote the collection  of oriented rectifiable curves with topology inherited from $\mathcal{M}(S)^3$. 
Suppose $\div\bs\mu=0$ (as a distribution).  Smirnov \cite[Theorem A]{Smi94} shows that $\bs{\mu}$ can be decomposed into elements from $\mathcal{C}$. 
In particular, it can be proven that there is a positive  measure $\rho$ on $\mathcal{C}$ 
such that
\begin{equation}\label{Smi1}
\bs{\mu}(B)=\int \mathbf{R}(B)\ d\rho(\mathbf{R}),
\end{equation}
and 
\begin{equation}\label{Smi2}
|\bs{\mu}|(B)=\int |\mathbf{R}|(B)\ d\rho(\mathbf{R}),
\end{equation}
for any Borel set $B\subset\R^3$.  From the representation \eqref{Smi1} of a divergence-free magnetization, we immediately obtain the following lemma. 
\begin{lemma}\label{1unrecdiv}
Suppose $S\subset \R^3$ is closed and purely 1-unrectifiable.  If $\bs{\mu}\in \mathcal{M}(S)^3$ is divergence-free, then $\bs{\mu}=0$. 
\end{lemma}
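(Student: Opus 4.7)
The plan is to apply Smirnov's decomposition \eqref{Smi1}--\eqref{Smi2} directly and then exploit the geometric hypothesis on $S$ against the rectifiability of the curve images arising in the decomposition.

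First, since $\bs{\mu}$ is divergence-free, Smirnov's theorem provides a positive measure $\rho$ on $\mathcal{C}$ for which \eqref{Smi2} holds. Because $\bs{\mu}\in \mathcal{M}(S)^3$, the total variation $|\bs{\mu}|$ is supported on $S$, so $|\bs{\mu}|(\R^3\setminus S)=0$. Plugging $B=\R^3\setminus S$ into \eqref{Smi2} gives
\[
\int |\mathbf{R}|(\R^3\setminus S)\,d\rho(\mathbf{R})=0,
\]
hence $|\mathbf{R}|(\R^3\setminus S)=0$ for $\rho$-a.e.\ oriented rectifiable curve $\mathbf{R}=\mathbf{R}_{\bs{\gamma}}$.

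Next, from the representation \eqref{Rgamma} together with $|\bs{\tau}|=1$ a.e., the total variation of $\mathbf{R}_{\bs{\gamma}}$ satisfies $|\mathbf{R}_{\bs{\gamma}}|(B)=\mathcal{H}_1(B\cap \Gamma)$ for Borel $B$, where $\Gamma=\bs{\gamma}([0,\ell])$. Thus for $\rho$-a.e.\ $\bs{\gamma}$ we have $\mathcal{H}_1(\Gamma\setminus S)=0$. On the other hand, the image $\Gamma$ of the Lipschitz map $\bs{\gamma}$ (extended trivially to all of $\R$) is a 1-rectifiable set, so the assumption that $S$ is purely 1-unrectifiable forces $\mathcal{H}_1(\Gamma\cap S)=0$. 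Combining both, $\mathcal{H}_1(\Gamma)=0$, i.e.\ $|\mathbf{R}_{\bs{\gamma}}|(\R^3)=0$ for $\rho$-a.e.\ $\bs{\gamma}$.

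Finally, applying \eqref{Smi2} with $B=\R^3$ yields
\[
\|\bs{\mu}\|_{TV}=|\bs{\mu}|(\R^3)=\int |\mathbf{R}|(\R^3)\,d\rho(\mathbf{R})=0,
\]
so $\bs{\mu}=0$. The argument is essentially a direct bookkeeping of Smirnov's decomposition against the hypothesis, and there is no real obstacle; the only point requiring care is the interplay between the two integral identities \eqref{Smi1}--\eqref{Smi2}, and in particular the need to work with the total variation identity \eqref{Smi2} rather than \eqref{Smi1} (which could suffer cancellations that would obscure the vanishing of mass). Using \eqref{Smi2} converts the problem into a purely measure-geometric statement about $\mathcal{H}_1$ on rectifiable curves, which is exactly what the definition of pure 1-unrectifiability handles.
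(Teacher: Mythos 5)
Your proof is correct, and it is essentially the argument the paper has in mind: the paper states this lemma as an immediate consequence of Smirnov's decomposition and gives no separate proof, relying on exactly the remark after \eqref{Rgamma} that $|\mathbf{R}_{\bs{\gamma}}|(B)=\mathcal{H}_1(B\cap\Gamma)=0$ for $B$ purely 1-unrectifiable, applied to $B=\text{supp}\,\bs{\mu}\subset S$ via \eqref{Smi2}. Your version inserts one extra (harmless) step — first deducing $\mathcal{H}_1(\Gamma\setminus S)=0$ for $\rho$-a.e.\ curve, then combining with $\mathcal{H}_1(\Gamma\cap S)=0$ to conclude the curves are $\mathcal{H}_1$-null — whereas the more direct route is to plug $B=S$ into \eqref{Smi2} and invoke pure 1-unrectifiability at once; both reach the same conclusion by the same mechanism, and your emphasis on using \eqref{Smi2} rather than \eqref{Smi1} to avoid cancellation is exactly the right point of care.
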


\begin{theorem}\label{unrec3D}
Suppose $S\subset \R^3$ is a closed, slender set.  
If $\bs{\mu}\in\mathcal{M}(S)^3$ 
has support that is purely 1-unrectifiable and  $\bs{\nu}\in\mathcal{M}(S)^3$ is $S$-equivalent to $\bs{\mu}$, then $\|\bs{\nu}\|_{TV}>\|\bs{\mu}\|_{TV}$
unless $\bs{\nu}=\bs{\mu}$.
\end{theorem}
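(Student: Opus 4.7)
The plan is to decompose $\bs{\nu}$ into its restrictions to $T := \mathrm{supp}(\bs{\mu})$ and its complement, and show that on $T$ the measure $\bs{\nu}$ must agree with $\bs{\mu}$. Since $T$ is closed (hence Borel) and purely 1-unrectifiable by hypothesis, the desired rigidity will come from Smirnov's decomposition of divergence-free measures into oriented rectifiable curves, each of which is ``blind'' to $T$.

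First, because $\bs{\nu}\stackrel{S}{\equiv}\bs{\mu}$, the difference $\bs{\eta}:=\bs{\nu}-\bs{\mu}\in\mathcal{M}(S)^3$ is $S$-silent. Since $S$ is slender, Theorem~\ref{divM} yields $\div\bs{\eta}=0$ in the distributional sense. Next, I invoke Smirnov's representation \eqref{Smi1}--\eqref{Smi2}: there is a positive measure $\rho$ on $\mathcal{C}$ such that for every Borel $B\subset\R^3$,
\begin{equation*}
\bs{\eta}(B)=\int_{\mathcal{C}}\mathbf{R}(B)\,d\rho(\mathbf{R}),\qquad |\bs{\eta}|(B)=\int_{\mathcal{C}}|\mathbf{R}|(B)\,d\rho(\mathbf{R}).
\end{equation*}
For each $\mathbf{R}=\mathbf{R}_{\bs{\gamma}}\in\mathcal{C}$, the image $\Gamma=\bs{\gamma}([0,\ell])$ is 1-rectifiable, and by \eqref{Rgamma} together with $|\bs{\tau}|=1$ a.e.\ we have $|\mathbf{R}_{\bs{\gamma}}|(T)=\mathcal{H}_1(T\cap\Gamma)$. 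Since $T$ is purely 1-unrectifiable this vanishes, so integration against $\rho$ gives $|\bs{\eta}|(T)=0$.

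In particular $\bs{\eta}\,\mathcal{b}\,T=0$, so $\bs{\nu}\,\mathcal{b}\,T=\bs{\mu}\,\mathcal{b}\,T=\bs{\mu}$, while $\bs{\eta}\,\mathcal{b}\,T^c=\bs{\nu}\,\mathcal{b}\,T^c$ (using that $\bs{\mu}$ is concentrated on $T$). Adding mutually singular total-variation contributions, I obtain
\begin{equation*}
\|\bs{\nu}\|_{TV}=|\bs{\nu}|(T)+|\bs{\nu}|(T^c)=\|\bs{\mu}\|_{TV}+\|\bs{\eta}\|_{TV},
\end{equation*}
which is strictly greater than $\|\bs{\mu}\|_{TV}$ unless $\bs{\eta}=0$, i.e.\ unless $\bs{\nu}=\bs{\mu}$.

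The conceptual heart of the argument is the triple ``silent $\Rightarrow$ divergence-free $\Rightarrow$ concentrated on rectifiable curves''; the main step requiring care is verifying the hypotheses of Smirnov's theorem and deducing $|\bs{\eta}|(T)=0$ from pure 1-unrectifiability. I do not anticipate a serious obstacle beyond checking that $T$ is a legitimate Borel set on which to evaluate the integral identity, which is automatic since supports of finite Borel measures are closed.
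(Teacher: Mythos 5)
Your argument is correct and is essentially the paper's own proof: both pass from $S$-silence to divergence-freeness via Theorem~\ref{divM} (using slenderness), then apply Smirnov's decomposition \eqref{Smi1}--\eqref{Smi2} to the difference, use pure 1-unrectifiability of $\mathrm{supp}(\bs{\mu})$ to see that each curve piece $\mathbf{R}_{\bs{\gamma}}$ gives it no mass, and conclude mutual singularity, whence $\|\bs{\nu}\|_{TV}=\|\bs{\mu}\|_{TV}+\|\bs{\nu}-\bs{\mu}\|_{TV}$. Your version merely spells out the mutual singularity explicitly via the restrictions to $T$ and $T^c$, which the paper condenses into a reference to the remark following \eqref{Rgamma}.
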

\begin{proof}
Since $\bs{\theta}:=\bs{\nu}-\bs{\mu}$ is $S$-silent,   Theorem~\ref{divM} implies  $\div\bs{\theta}=0$ and so $\bs{\theta}$ can be represented in the form 
\eqref{Smi1}, where \eqref{Smi2} holds.  Since the support of $\bs{\mu}$ is   purely 1-unrectifiable, it follows from \eqref{Smi2} and the remark after
\eqref{Rgamma}
that the measures $\bs{\mu}$ and $\bs{\theta}$ are mutually singular.  Thus,
$\|\bs{\nu}\|_{TV}=\|\bs{\mu}\|_{TV}+\|\bs{\theta}\|_{TV}>\|\bs{\mu}\|_{TV}$ unless $\bs{\nu}=\bs{\mu}$.
\end{proof}

\begin{exa}\label{Snotslender}
Recall from Example~\ref{balldipole} that if $S=\overline{\B}(0,1)$, 
the magnetizations $\bs{\mu}$ modeling a uniformly magnetized ball and 
$\bs{\nu}$ which
is a point dipole at $0$, with the same net moment as $\bs{\mu}$, 
are $S$-equivalent.  Moreover,  it is easy to verify that $\|\bs{\mu}\|_{TV}=\|\bs{\nu}\|_{TV}=|\langle\nu\rangle|$.   Since the support of $\bs{\nu}$ is 
a single  point, it is purely 1-unrectifiable, hence
the assumption that 
$S$ is a slender set cannot be eliminated from Theorem \ref{unrec3D}.  
\end{exa}

The previous example entails that total variation minimization is not sufficient alone to distinguish  magnetizations with purely 1-unrectifiable support among all equivalent magnetizations supported on $S$, when $S$ is not slender.  However, as the following result shows, the recovery problem for general $S$ 
has at most one solution when restricted to magnetizations whose support  
is purely 1-unrectifiable {\em and} has finite $\mathcal{H}_2$ measure.   

To see this, we shall need a consequence of the Besicovitch-Federer Projection Theorem~\cite[Thm 18.1]{Mattila}; namely that the complement of a closed, purely 2-unrectifiable set with finite $\mathcal{H}_2$ measure is connected.  
We will
also need the fact that a purely 1-unrectifiable set  is  purely 2-unrectifiable.  We are confident these facts are known (e.g., see the introduction in \cite{DavSem}), but since we have not explicitly found proofs in the literature, we provide outlines of the arguments.   

With regard to the first fact, let $F\subset \R^3$ be a closed, purely 2-unrectifiable set with finite $\mathcal{H}_2$ measure  and suppose   ${\mathbb B}(x,r)$,  ${\mathbb B}(y,r)$ are disjoint balls in    $\R^3\setminus F$.  By the Besicovitch-Federer Projection Theorem there exists a plane $P$ such that the intersection of the orthogonal projections of these balls onto $P$ minus the orthogonal projection of $F$ onto $P$ is nonempty and therefore the balls can be joined by a line segment not intersecting $F$.  

As to the second fact, it follows from \cite[Lemma~3.2.18]{FedererBook} that it is enough for a set $F$ to be purely 2-unrectifiable that $\mathcal{H}_2(F\cap \psi(K))=0$ for any compact set $K\subset \R^2$ and any bi-Lipschitz mapping $\psi:K\to \R^3$.   Since bi-Lipschitz maps preserve unrectifiability we 
may restrict our considerations to $\R^2$ where the result follows easily from Fubini's theorem.  

As a consequence of these facts,  the complement of a closed, purely 1-unrectifiable set with  finite $\mathcal{H}_2$ measure must be connected.

\begin{corollary}\label{Cor-unrec3D}
Suppose $S$ is a closed, proper subset of $\R^3$ and that $\bs{\nu}\in\mathcal{M}(S)^3$ has purely 1-unrectifiable support 
of finite $\mathcal{H}_2$ measure.  If $\bs{\mu}\in\mathcal{M}(S)^3$ is $S$-equivalent to $\bs{\nu}$ but not equal to $\bs{\nu}$, then the support of $\bs{\mu}$ is not a purely 1-unrectifiable set with finite $\mathcal{H}_2$ measure. 
\end{corollary}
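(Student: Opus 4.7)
The plan is to argue by contradiction and to reduce the corollary to Theorem~\ref{unrec3D} by replacing $S$ with a smaller set carrying both measures and verifying that this smaller set is slender. Suppose, contrary to the claim, that $\bs{\mu}\in\mathcal{M}(S)^3$ is $S$-equivalent to $\bs{\nu}$ with $\bs{\mu}\ne\bs{\nu}$, and that $\mathrm{supp}(\bs{\mu})$ is also purely $1$-unrectifiable with finite $\mathcal{H}_2$ measure. Set $T:=\mathrm{supp}(\bs{\mu})\cup\mathrm{supp}(\bs{\nu})$. Then $T$ is closed, $T\subset S$, and $T$ is purely $1$-unrectifiable because, for any $1$-rectifiable set $E$, $\mathcal{H}_1(E\cap T)\le\mathcal{H}_1(E\cap\mathrm{supp}(\bs{\mu}))+\mathcal{H}_1(E\cap\mathrm{supp}(\bs{\nu}))=0$. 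Moreover $\mathcal{H}_2(T)<\infty$, so $\dim_{\mathcal{H}}T\le 2$ and consequently $\mathcal{L}_3(T)=0$.

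Next I would show that $T$ is slender in the sense of Definition~\ref{defslender}. By the two facts recorded immediately before the statement, $T$ is also purely $2$-unrectifiable (being purely $1$-unrectifiable) and, being closed with finite $\mathcal{H}_2$ measure, the Besicovitch--Federer argument sketched there gives that $\R^3\setminus T$ is connected. Since $\mathcal{L}_3(T)=0$, that single component of $\R^3\setminus T$ has infinite Lebesgue measure, so $T$ is indeed slender.

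The central step is to upgrade $S$-equivalence into $T$-equivalence. Because $T\subseteq S$ contains both supports, the distribution $\bb{b}(\bs{\mu}-\bs{\nu})$ is a harmonic, hence real-analytic, $\R^3$-valued function on the connected open set $\R^3\setminus T$. By $S$-equivalence it vanishes on the nonempty open subset $\R^3\setminus S$ (which is nonempty since $S$ is a proper subset), so analytic continuation forces it to vanish on all of $\R^3\setminus T$. Thus $\bs{\mu}\stackrel{T}{\equiv}\bs{\nu}$. With $T$ slender and both measures supported in $T$, Theorem~\ref{unrec3D} applies in either direction: using that $\mathrm{supp}(\bs{\nu})$ is purely $1$-unrectifiable gives $\|\bs{\mu}\|_{TV}>\|\bs{\nu}\|_{TV}$, while using that $\mathrm{supp}(\bs{\mu})$ is purely $1$-unrectifiable gives $\|\bs{\nu}\|_{TV}>\|\bs{\mu}\|_{TV}$. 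This contradiction completes the proof.

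The main obstacle is the passage from $S$-equivalence to $T$-equivalence, since the hypothesis only controls the field off $S$ whereas Theorem~\ref{unrec3D} requires equivalence off $T$. This is exactly where the finite-$\mathcal{H}_2$ hypothesis enters, via the connectedness of $\R^3\setminus T$ which unlocks analytic continuation; Example~\ref{Snotslender} indicates that this extra measure-theoretic assumption cannot be dropped.
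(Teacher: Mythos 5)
Your proof is correct and follows the same overall strategy as the paper's: pass to a smaller closed set that is necessarily slender (using the Besicovitch--Federer projection argument to get connectedness of the complement), upgrade $S$-equivalence to equivalence relative to that smaller set by real-analytic continuation, and then conclude. The only real divergence is in the final step. You take $T=\mathrm{supp}(\bs{\mu})\cup\mathrm{supp}(\bs{\nu})$ and then apply Theorem~\ref{unrec3D} twice, once with the roles of $\bs{\mu}$ and $\bs{\nu}$ exchanged, to produce the contradiction $\|\bs{\mu}\|_{TV}>\|\bs{\nu}\|_{TV}>\|\bs{\mu}\|_{TV}$. The paper instead works with $\tilde S=\mathrm{supp}(\bs{\mu}-\bs{\nu})\subseteq T$, shows $\bs{\mu}-\bs{\nu}$ is $\tilde S$-silent, invokes Theorem~\ref{divM} to get $\div(\bs{\mu}-\bs{\nu})=0$, and then finishes via Lemma~\ref{1unrecdiv}, which says a divergence-free measure supported on a purely $1$-unrectifiable set is zero. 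The two finishes are about equally deep (both ultimately rest on Smirnov's decomposition), and your symmetric double application of Theorem~\ref{unrec3D} is a clean way to avoid citing Lemma~\ref{1unrecdiv} explicitly; the paper's route is marginally more direct in that it produces $\bs{\mu}=\bs{\nu}$ outright rather than by contradiction in the TV norms.
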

\begin{proof}
Suppose $\bs{\mu}\in\mathcal{M}(S)^3$ is $S$-equivalent to $\bs{\nu}$  and has support that is purely 1-unrectifiable with finite $\mathcal{H}_2$ measure.  
Then the support $\tilde S$ of $\bs{\mu}-\bs{\nu}$ is also purely 1-unrectifiable with finite $\mathcal{H}_2$ measure.  Therefore, its complement is connected and thus $\tilde S$ is slender.

Moreover, $\bs{\mu}-\bs{\nu}$ is $S$-silent, hence its field vanishes on
the nonempty open set $\R^3\setminus S$, and since $\tilde S$ is closed
with $\mathcal{L}_3(\tilde S)=0$ (because it is slender), the field
must vanish on a nonempty open subset of $\R^3\setminus\tilde S$. 
Since $\tilde S$ has connected complement, we conclude that
$\bs{\mu}-\bs{\nu}$ is $\tilde S$-silent, by real analyticity. Consequently, it is divergence-free by Theorem~\ref{divM} and hence,  by Lemma~\ref{1unrecdiv}, 
$\bs{\mu}-\bs{\nu}$ is the zero measure.
\end{proof}

Corollary \ref{Cor-unrec3D} applies in particular if $\bs{\nu}$ 
is a finite sum of point dipoles. However, in view of Example \ref{balldipole},
it does not apply in general to a convergent series of point dipoles.

\subsection{The net moment of silent magnetizations}
  
Our next result shows that, under certain assumptions on their support, 
silent measures have vanishing moment:

\begin{lemma}\label{MomZero}
	Let $S\subset\R^3$ be a closed set and $\bs{\mu}\in\mathcal{M}(S)^3$ be $S$-silent.  
	Assume that one of the following conditions is satisfied:
	\begin{enumerate}
		\item $S$ is compact,
		\item $\div\bs{\mu}=0$.
	\end{enumerate}
	Then the net moment $\langle \bs{\mu}\rangle=0$.  
\end{lemma}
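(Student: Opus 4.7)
The plan is to treat the two hypotheses separately, unified by the intuition that $\langle\bs{\mu}\rangle$ is the dipole coefficient in the far-field asymptotics of $\Phi(\bs{\mu})$, and that either silence with compact support or distributional divergence-freeness forces this coefficient to vanish.

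For case~(2), I would argue directly from the distributional equation $\div\bs{\mu}=0$. Fix a smooth cutoff $\psi\in C_c^\infty(\R^3)$ with $\psi\equiv 1$ on $\B(0,1)$ and $\psi\equiv 0$ outside $\B(0,2)$, and for each index $i\in\{1,2,3\}$ and each integer $n\ge 1$ test against $\phi_n(x):=x_i\,\psi(x/n)\in C_c^\infty(\R^3)$. This produces the identity
\[
0=-\int\grad\phi_n\cdot d\bs{\mu}=-\int\psi(x/n)\,d\mu_i(x)-\frac{1}{n}\int x_i\,\grad\psi(x/n)\cdot d\bs{\mu}(x).
\]
By dominated convergence the first integral tends to $\mu_i(\R^3)$ as $n\to\infty$, using $|\psi|\le 1$ and finiteness of $|\bs{\mu}|$. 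The second integral is supported in the annulus $\{n\le|x|\le 2n\}$ where $|x_i/n|\le 2$, so it is bounded by $2\|\grad\psi\|_\infty\,|\bs{\mu}|(\{n\le|x|\le 2n\})$, which vanishes in the limit because $|\bs{\mu}|$ is a finite measure. Thus $\mu_i(\R^3)=0$ for each $i$, and so $\langle\bs{\mu}\rangle=0$.

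For case~(1), I would exploit that $\bs{\mu}$ has compact support inside some ball $\B(0,R)\supset S$ to Taylor-expand the kernel $y\mapsto (x-y)/|x-y|^3$ at $y=0$, yielding
\[
\Phi(\bs{\mu})(x)=\frac{x\cdot\langle\bs{\mu}\rangle}{4\pi|x|^3}+O(|x|^{-3})\qquad\text{as }|x|\to\infty,
\]
with implicit constants depending only on $R$ and $\|\bs{\mu}\|_{TV}$. Since $S$ is compact, $\R^3\setminus S$ has an unbounded connected component $\mathcal{U}$ containing a neighborhood of infinity; on $\mathcal{U}$, silence gives $\grad\Phi(\bs{\mu})\equiv 0$, so $\Phi(\bs{\mu})$ is a constant there, and the estimate $|\Phi(\bs{\mu})(x)|\le \|\bs{\mu}\|_{TV}/(4\pi\,\textrm{dist}(x,S)^2)$ forces this constant to be zero. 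Multiplying the expansion by $|x|^2$ and sending $|x|\to\infty$ along any direction $\hat x\in\sph(0,1)$ then yields $\hat x\cdot\langle\bs{\mu}\rangle=0$ for every unit vector, i.e.\ $\langle\bs{\mu}\rangle=0$.

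The crux is that the two hypotheses are genuinely inequivalent: Example~\ref{balldipole} shows that, for compact $S$, silent magnetizations need not be divergence-free, so the test-function trick of case~(2) is unavailable; and under only~(2) the support of $\bs{\mu}$ may be unbounded, blocking the multipole expansion of case~(1). The main technical care is the uniform Taylor remainder estimate for the kernel on $\{|y|\le R\}$ as $|x|\to\infty$, which is routine since the Newton kernel is smooth off the diagonal.
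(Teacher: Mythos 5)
Your proof is correct. For case (2) it coincides in substance with the paper's: both test $\div\bs{\mu}=0$ against $\grad\phi_n$ for a cutoff of $x_i$ at scale $n$ and split the result into a main term tending to $\mu_i(\R^3)$ plus an annulus term tending to zero. The paper takes $n$ along the dyadic subsequence $2^m$ so the annulus masses vanish from their summability, whereas you use directly that $|\bs{\mu}|(\{n\le |x|\le 2n\})\le |\bs{\mu}|(\{|x|\ge n\})\to 0$ for a finite measure, which is marginally cleaner but the same idea. For case (1) your route is genuinely different. The paper stays distributional: having shown $\Phi(\bs{\mu})\equiv 0$ on the unbounded component of $\R^3\setminus S$ via the integral estimate \eqref{inegPhig}, so that $\Phi(\bs{\mu})$ is supported in $\overline{\B}(0,n)$ for $n$ large with $S\subset\B(0,n)$, it tests the \emph{same} $\phi_n$ and concludes through $\langle\bs{\mu}\rangle_i = \langle\grad\phi_n,\bs{\mu}\rangle = \langle\phi_n,\Delta\Phi(\bs{\mu})\rangle = \langle\Delta\phi_n,\Phi(\bs{\mu})\rangle = 0$, the last equality because $\Delta\phi_n$ vanishes on $\B(0,n)$. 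You instead get $\Phi\equiv 0$ on the unbounded component from the pointwise decay \eqref{eqn:Phi_bound} (which works precisely because $\textrm{dist}(x,S)\to\infty$ when $S$ is compact) and then read off $\langle\bs{\mu}\rangle$ as the dipole coefficient in the far-field multipole expansion, with the uniform remainder bound coming from smoothness of the Newton kernel off the diagonal. Both arguments are sound; the paper's has the merit of recycling the same test functions across the two hypotheses, while yours is more physically transparent (the net moment really is the leading far-field coefficient of the potential) and makes visible exactly where compactness enters. One small inaccuracy in your closing remark: the test-function computation is not "unavailable" under hypothesis (1) --- the paper adapts it, at the cost of the extra step showing $\Phi(\bs{\mu})$ is compactly supported; what is true is that the raw calculation from case (2) does not close by itself once $\div\bs{\mu}\neq 0$.
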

\begin{proof}
	Fix $i\in\{1,2,3\}$.
	Let $\phi\in C^\infty_0(\R^3)$ be supported in $\B(0,2)$, $\phi(x)=x_i$ on $\B(0,1)$ and for any $n>0$ let $\phi_n(x):=n\phi(x/n)$.
	Note that $\|\grad\phi\|_\infty=\|\grad\phi_n\|_\infty$, that
$\phi_n$ is supported in $\B(0,2n)$, and for $x\in\overline{\B}(0,n)$ that $\phi_n(x)=x_i$, $\grad\phi_n(x)=e_i$ and $\Delta\phi_n(x)=0$.
	
	If $S$ is compact  take $n>0$ such that $S\subset\B(0,n)$.
	Then $\langle\grad\phi_n,\bs{\mu}\rangle=\langle \bs{\mu}\rangle_i$, the $i$-th component of the  moment of $\bs{\mu}$.
	Since $\bs{\mu}$ is $S$-silent $\Phi(\bs\mu)$ is constant in each connected component of $\R^3\setminus S$,  and applying \eqref{inegPhig} with $E=\B(0,R)\setminus\B(0,n)$ we conclude on letting $R\to\infty$ that 
$\Phi(\bs\mu)\equiv0$ in the unbounded component.
	Thus, $\Phi(\bs{\mu})$ is supported on $\B(0,n)$ and since $\Delta\Phi(\bs{\mu})=\div \bs{\mu}$ by Proposition~\ref{LocIntPhi}, we obtain:
	\begin{align*}
	\langle \bs{\mu}\rangle_i = \langle\grad\phi_n,\bs{\mu}\rangle 
	= \langle\phi_n,\div\bs{\mu}\rangle = \langle \phi_n,\Delta\Phi(\bs{\mu})\rangle = \langle\Delta\phi_n,\Phi(\bs{\mu})\rangle=0.
	\end{align*}
	Therefore, taking $i=1,2,3$, we get that $\langle\bs{\mu}\rangle=0$, as announced.
	
	Assume next that $\div\bs{\mu}=0$. For any integer $m>0$, let $D_m:=\B(0,2^{m+1})\setminus\B(0,2^m)$ and $M_m:=|\bs{\mu}|(D_m)$.
	Because $\sum_m M_m \leq \|\bs{\mu}\|_{TV} < \infty$, we have that
\begin{equation}
\label{liman} 
	|\langle \grad\phi_{2^m}|_{D_m} , \bs{\mu} \rangle| \leq M_m\|\grad\phi\|_\infty \to 0\quad \textrm{as}\quad m\to\infty.
\end{equation}
	Now, let $\mathbf{E}_i$ be the constant function equal to $e_i$ on $\R^3$.
	By \eqref{liman}, we see that 
	$$
	\lim_{m\to\infty}\langle\grad\phi_{2^m},\bs{\mu}\rangle 
	= \lim_{m\to\infty}\langle\grad\phi_{2^m}|_{\B(0,2^{m})}, \bs{\mu}\rangle
	= \langle \mathbf{E}_i, \bs{\mu}\rangle = \langle \bs{\mu}\rangle_i,
	$$
	and since  $\langle\grad\phi_n,\bs{\mu}\rangle=\langle\phi_n,\div\bs{\mu}\rangle=0$ for each $n>0$, by our assumption, we conclude that
	$\langle\bs{\mu}\rangle=0$, as desired.
\end{proof}

%

Assumptions (a) or (b) cannot be dropped in Lemma \ref{MomZero}, for
it is not sufficient that a magnetization be $S$-silent for its net 
moment to vanish,  as shown by the following example.

\begin{exa} 
\label{momsil}
Consider the case where $S=\R^3\setminus\B(0,R)$ 
and let $\bs{\mu}=v\mathcal{H}_2\mathcal{b}\sph(0,R)$
where $v\in \R^3\setminus\{0\}$. 
The density of $\bs{\mu}$ with respect to
$\mathcal{H}_2\mathcal{b}\sph(0,R)$ is the
constant map $\bb{v}:\sph(0,R)\to\R^3$
given by $\bb{v}(x)= v$, which is the trace on $\sph(0,R)$ of the gradient of
the function $x\mapsto v\cdot x$ which is harmonic 
on a neighborhood of $\overline{\B(0,R)}$,
hence $\bb{v}$ {\it a fortiori} belongs to the Hardy space 
$\mathcal{H}^2_{+,R}$ of harmonic gradients in $\B(0,R)$.
Therefore $\bs{\mu}$ is silent in that ball \cite[Lemma 4.2]{BaGe},
and still $\langle \bs{\mu}\rangle=4\pi v$.
Integrating this example  over $R\in[1,\infty)$ against the weight 
$1/R^4$ further shows that the $\R^3$-valued measure
$d\bs{\nu}(x)={\bb v}|x|^{-4}\chi_{\{|x|\geq1\}}(x)d\mathcal{L}_3(x)$,
is silent in the ball
$\B(0,1)$ but has $\langle\bs{\nu}\rangle=4\pi v$. 
This provides us with an example of a (non-compactly supported)
measure with non-zero total moment 
which is silent in the complement of its support. 
\end{exa}


\subsection{Total variation and unidirectional magnetizations}\label{uni3D:Sec}

For $\bs{\mu}\in\mathcal{M}(S)^3$ we can write
$d\bs{\mu}=\bb{u}_{\bs{\mu}}d{|\bs{\mu}|}$,  therefore
   the Cauchy Schwarz inequality yields that
\begin{equation}\label{mom1}
|\langle \bs{\mu}\rangle|^2= \int \langle \bs{\mu}\rangle \cdot \bb{u}_{\bs{\mu}} \, d{|\bs{\mu}|}\le |\langle \bs{\mu}\rangle| \, \|\bs{\mu}\|_{TV},
\end{equation}
where equality holds if and only if either $\langle \bs{\mu}\rangle=0$ or $\bb{u}_{\bs{\mu}}=\langle \bs{\mu}\rangle/|\langle \bs{\mu}\rangle|$ a. e. with respect to $|\bs{\mu}|$.  We say that $\bs{\mu}$ is {\em uni-directional} if $\bb{u}_{\bs{\mu}}$ is constant  a.e. with respect to $|\bs{\mu}|$ (note
that the zero magnetization is uni-directional).  Thus,   \eqref{mom1} implies the following:
\begin{lemma}\label{lemuni1}
If $\bs{\mu}\in\mathcal{M}(S)^3$, then $|\langle \bs{\mu}\rangle |\le \|\bs{\mu}\|_{TV}$
with equality if and only if $\bs{\mu}$ is uni-directional.
\end{lemma}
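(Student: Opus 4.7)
The plan is to read off both assertions directly from the chain of inequalities \eqref{mom1} already displayed just above the statement. Writing $d\bs{\mu}=\bb{u}_{\bs{\mu}}\,d|\bs{\mu}|$ and integrating gives $\langle\bs{\mu}\rangle=\int \bb{u}_{\bs{\mu}}\,d|\bs{\mu}|$, so dotting with the constant vector $\langle\bs{\mu}\rangle$ and applying the pointwise Cauchy--Schwarz estimate $\langle\bs{\mu}\rangle\cdot\bb{u}_{\bs{\mu}}\le|\langle\bs{\mu}\rangle||\bb{u}_{\bs{\mu}}|$ yields
\[
|\langle\bs{\mu}\rangle|^{2}\;=\;\int\langle\bs{\mu}\rangle\cdot\bb{u}_{\bs{\mu}}\,d|\bs{\mu}|\;\le\;|\langle\bs{\mu}\rangle|\int|\bb{u}_{\bs{\mu}}|\,d|\bs{\mu}|\;=\;|\langle\bs{\mu}\rangle|\,\|\bs{\mu}\|_{TV},
\]
since $|\bb{u}_{\bs{\mu}}|=1$ at $|\bs{\mu}|$-a.e.\ point. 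Dividing by $|\langle\bs{\mu}\rangle|$ when it is nonzero gives $|\langle\bs{\mu}\rangle|\le\|\bs{\mu}\|_{TV}$; when $\langle\bs{\mu}\rangle=0$ the inequality is trivial.

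For the characterization of equality, suppose first that $|\langle\bs{\mu}\rangle|=\|\bs{\mu}\|_{TV}$. If $\bs{\mu}=0$ then it is uni-directional by convention. Otherwise $\|\bs{\mu}\|_{TV}>0$, hence $\langle\bs{\mu}\rangle\ne 0$, and equality throughout the displayed chain forces equality $|\bs{\mu}|$-a.e.\ in the pointwise Cauchy--Schwarz inequality; combined with $|\bb{u}_{\bs{\mu}}|=1$ this gives
\[
\bb{u}_{\bs{\mu}}(x)\;=\;\langle\bs{\mu}\rangle/|\langle\bs{\mu}\rangle|\qquad |\bs{\mu}|\text{-a.e.},
\]
a constant unit vector, so $\bs{\mu}$ is uni-directional.

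Conversely, if $\bs{\mu}$ is uni-directional with $\bb{u}_{\bs{\mu}}\equiv v$, $|v|=1$, $|\bs{\mu}|$-a.e., then integrating gives $\langle\bs{\mu}\rangle=v\,|\bs{\mu}|(\R^{3})=v\,\|\bs{\mu}\|_{TV}$, whence $|\langle\bs{\mu}\rangle|=\|\bs{\mu}\|_{TV}$. There is no substantive obstacle here: once inequality \eqref{mom1} is granted, the lemma is essentially an unpacking of the equality condition in pointwise Cauchy--Schwarz, together with the trivial reverse computation. The only mildly delicate point is splitting off the case $\bs{\mu}=0$ so that division by $|\langle\bs{\mu}\rangle|$ is justified.
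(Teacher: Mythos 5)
Your proof is correct and follows essentially the same route as the paper, which derives the lemma from the chain \eqref{mom1} and its equality conditions. The paper states the equality case of \eqref{mom1} and then simply says ``Thus, \eqref{mom1} implies the following'' without spelling out the small case split; you have just made explicit the step (handling $\bs{\mu}=0$ separately and observing that $\bs{\mu}\neq 0$ together with equality forces $\langle\bs{\mu}\rangle\neq 0$) that the paper leaves to the reader.
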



%
 
 
 We call a magnetization {\em uni-dimensional} if it is the difference of two uni-directional magnetizations. 
 The next lemma states that   a    uni-dimensional magnetization which is divergence-free   must be the zero magnetization. 
 \begin{lemma}\label{unidiv0}
 If  $\bs{\mu}\in\mathcal{M}(\R^3)^3$ is uni-dimensional and $\div \bs{\mu}=0$, then $\bs{\mu}=0$.
 \end{lemma}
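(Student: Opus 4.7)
The plan is to exploit the fact that a uni-dimensional magnetization is tightly constrained by the vanishing of its net moment (which comes for free from Lemma~\ref{MomZero}), and to finish the only remaining case by a short Fourier or convolution argument.

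First I would write $\bs{\mu}=v_1\sigma_1-v_2\sigma_2$, where $v_1,v_2\in\R^3$ are unit vectors and $\sigma_1,\sigma_2\in\mathcal{M}(\R^3)$ are finite positive measures. This is exactly what the definition of uni-directional says once the constant direction is factored out of the polar decomposition $d\bs{\mu}_i = \bb{u}_{\bs{\mu}_i}\, d|\bs{\mu}_i|$. Since $\div\bs{\mu}=0$, part~(b) of Lemma~\ref{MomZero} gives
\[
\langle\bs{\mu}\rangle \;=\; v_1\,\sigma_1(\R^3)-v_2\,\sigma_2(\R^3) \;=\; 0.
\]
If $v_1$ and $v_2$ are linearly independent, this identity forces $\sigma_1(\R^3)=\sigma_2(\R^3)=0$, and since $\sigma_1,\sigma_2\ge 0$ both measures vanish, so $\bs{\mu}=0$.

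The remaining case is $v_2=\pm v_1$, in which $\bs{\mu}$ reduces to $v_1\tau$ for a single unit vector $v_1$ and a finite signed scalar measure $\tau$ (namely $\sigma_1\mp\sigma_2$). Divergence-freeness then reads $v_1\cdot\nabla\tau=0$, and I would close the argument by Fourier transform: $\widehat{\tau}$ is bounded and continuous since $\tau$ is a finite measure, the identity $(\xi\cdot v_1)\widehat{\tau}(\xi)=0$ holds on all of $\R^3$, and continuity together with density of $\{\xi\cdot v_1\neq 0\}$ forces $\widehat{\tau}\equiv 0$, hence $\tau=0$. (Equivalently, one may convolve $\tau$ with a smooth approximate identity: the resulting smooth $L^1$ function is constant along every line parallel to $v_1$, and being integrable along such a line it must vanish identically.)

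The main obstacle, such as it is, lies in this collinear case: the moment identity alone is inconclusive there, because $\sigma_1(\R^3)$ and $\sigma_2(\R^3)$ can be equal and positive. What rescues matters is that $\bs{\mu}$ is then genuinely uni-directional with a \emph{signed} scalar density, so divergence-freeness translates into translation-invariance of a finite measure in one fixed direction, and no nonzero finite signed measure on $\R^3$ has such invariance.
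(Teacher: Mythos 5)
Your proof is correct, and it is actually more careful than the paper's on one point. The paper opens by asserting that a uni-dimensional, divergence-free $\bs{\mu}$ must have the form $\bs{\mu}=\mu v$ for a single direction $v$ and a scalar measure $\mu$; strictly speaking, the definition (``difference of two uni-directional magnetizations'') allows $\bs{\mu}=v_1\sigma_1-v_2\sigma_2$ with $v_1,v_2$ non-parallel, and the reduction to the collinear case is not spelled out there. Your moment argument via Lemma~\ref{MomZero}(b) supplies exactly that missing reduction: since $\div\bs{\mu}=0$ gives $\langle\bs{\mu}\rangle=0$ (the proof of part~(b) of that lemma uses only divergence-freeness, so its invocation is legitimate here), linear independence of $v_1,v_2$ forces $\sigma_1(\R^3)=\sigma_2(\R^3)=0$ and hence $\sigma_1=\sigma_2=0$ because they are nonnegative. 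For the remaining collinear case, the two proofs part ways in technique but not substance: the paper tests $\mu$ against $f-\tilde f=\partial_{x_1}g$ to deduce translation-invariance of $\mu$ in the $v$-direction and concludes that a finite measure with that invariance is zero, while you pass to the Fourier transform, noting that $(\xi\cdot v)\widehat{\tau}(\xi)=0$ with $\widehat{\tau}$ continuous and bounded forces $\widehat{\tau}\equiv 0$. Your parenthetical mollification alternative is essentially the paper's translation-invariance argument in a smoothed form. Both routes are standard and correct; yours has the virtue of making the case split explicit.
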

 \begin{proof} Suppose $\bs{\mu}\in\mathcal{M}(\R^3)^3$ is uni-dimensional and divergence-free.   Then $\bs{\mu}=\mu v$ for some $v\in \R^3$ and $\mu\in \mathcal{M}(\R^3)$, with 
 	 		  $0=\div (\mu v) =v\cdot \grad\mu $.  A standard argument (see below) shows that $\mu$ is translation invariant with respect to any vector parallel to $v$  and therefore $\mu$ is finite only if it is zero.

 		Without loss of generality we may assume that $v=(1,0,0)$.
	To see the translation invariance of  $\mu$, take any $f\in\mathcal{C}_c^\infty(\R^3)$ and let $\tilde{f}$ be a translation of $f$ in the $x_1$ direction. 
 		Then $f-\tilde f=\partial_{x_1}g$ with $g\in\mathcal{C}_c^\infty(\R^3)$ defined by:
 		$$
 		g(x_1,x_2,x_3):=\int_{-\infty}^{x_1}(f-\tilde f)(y,x_2,x_3) d\mathcal{L}_1(y),
 		$$
 		and so $\mu(f-\tilde f)=\langle\partial_{x_1}g,\mu\rangle =-\langle g,\partial_{x_1}\mu\rangle=0$.	%
 	%
 \end{proof}

\begin{theorem}  
\label{UnidirThm} Let $S=\bigcup_{i=1}^n S_i$ for some disjoint closed sets $S_1, S_2, \ldots, S_n$ in $\R^3$ and suppose that $S$ is either compact or slender.  Let  $\bs{\mu}\in \mathcal{M}(S)^3$  be such that $\bs{\mu}_i:=\bs{\mu}\mathcal{b}S_i$ is uni-directional for $i=1, 2, \ldots, n$.    

If $\bs{\nu}\in \mathcal{M}(S)^3$   is $S$-equivalent to $\bs{\mu}$, then $\bs{\nu}_i:=\bs{\nu}\mathcal{b}S_i$ and $\bs{\mu}_i$ are $S_i$-equivalent for $i=1, 2, \ldots, n$, moreover
\begin{equation}
\label{uniminTV}  \|\bs{\mu}\|_{TV}\le  \|\bs{\nu}\|_{TV},
\end{equation}
with equality in \eqref{uniminTV} if and only if      $\bs{\nu}_i$ is uni-directional in the same direction as $\bs{\mu}_i$ for $i=1, 2, \ldots, n$.
Furthermore, if $S$ is slender and equality holds in \eqref{uniminTV}, then $\bs{\mu}=\bs{\nu}$.
\end{theorem}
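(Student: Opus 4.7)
The plan is to reduce the problem to the individual components $S_i$. Set $\bs{\theta}:=\bs{\nu}-\bs{\mu}$, which is $S$-silent by hypothesis. Writing $S=S_i\sqcup(\bigcup_{j\ne i}S_j)$ as a disjoint union of two closed sets and applying Lemma \ref{components}, we obtain that each $\bs{\theta}_i:=\bs{\theta}\mathcal{b}S_i=\bs{\nu}_i-\bs{\mu}_i$ is $S_i$-silent, which proves the first assertion.

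Next I would verify that $\langle\bs{\mu}_i\rangle=\langle\bs{\nu}_i\rangle$, so that the moment-vs-total-variation inequality of Lemma \ref{lemuni1} can be combined with the uni-directionality of $\bs{\mu}_i$. When $S$ is compact, each $S_i\subset S$ is compact and Lemma \ref{MomZero}(a) applied to $\bs{\theta}_i$ gives the equality directly. When $S$ is slender, I first argue that each $S_i$ is slender too: any connected component $V$ of $\R^3\setminus S_i$ is open and hence has positive Lebesgue measure, so (since $\mathcal{L}_3(\bigcup_{j\ne i}S_j)=0$) it meets $\R^3\setminus S$ and therefore contains the entire connected component of $\R^3\setminus S$ through any such point, whose measure is infinite. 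Theorem \ref{divM} then yields $\div\bs{\theta}_i=0$, so that Lemma \ref{MomZero}(b) applies. This descent of slenderness from $S$ to each $S_i$ is the main (though mild) technical step; everything else is bookkeeping with the lemmas of Section \ref{Sect3D}.

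Using disjointness of the $S_i$ to write $\|\bs{\mu}\|_{TV}=\sum_i\|\bs{\mu}_i\|_{TV}$ and $\|\bs{\nu}\|_{TV}=\sum_i\|\bs{\nu}_i\|_{TV}$, and combining with the equality case of Lemma \ref{lemuni1} for each uni-directional $\bs{\mu}_i$ together with the moment identity just obtained, one gets
\begin{equation*}
\|\bs{\mu}\|_{TV}=\sum_i|\langle\bs{\mu}_i\rangle|=\sum_i|\langle\bs{\nu}_i\rangle|\le\sum_i\|\bs{\nu}_i\|_{TV}=\|\bs{\nu}\|_{TV},
\end{equation*}
which is \eqref{uniminTV}. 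Equality throughout forces Lemma \ref{lemuni1} to be tight for every $\bs{\nu}_i$: if $\langle\bs{\mu}_i\rangle\ne 0$ this means $\bs{\nu}_i$ is uni-directional with direction $\langle\bs{\mu}_i\rangle/|\langle\bs{\mu}_i\rangle|$, which coincides with the direction of $\bs{\mu}_i$ (pinned the same way by Lemma \ref{lemuni1} applied to $\bs{\mu}_i$); if $\langle\bs{\mu}_i\rangle=0$, Lemma \ref{lemuni1} forces both $\bs{\mu}_i$ and $\bs{\nu}_i$ to vanish, so the matching direction statement is vacuous.

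For the last assertion, assume $S$ is slender and equality holds in \eqref{uniminTV}. The previous paragraph shows that $\bs{\nu}_i$ and $\bs{\mu}_i$ are each uni-directional (both zero in the degenerate case), so $\bs{\theta}_i=\bs{\nu}_i-\bs{\mu}_i$ is uni-dimensional by definition. Since $S_i$ is slender and $\bs{\theta}_i$ is $S_i$-silent, Theorem \ref{divM} yields $\div\bs{\theta}_i=0$, and then Lemma \ref{unidiv0} forces $\bs{\theta}_i=0$. Summing over $i$ gives $\bs{\nu}=\bs{\mu}$.
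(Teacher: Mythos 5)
Your proof is correct and follows essentially the same route as the paper: reduce to the components $S_i$ via Lemma \ref{components}, use Lemma \ref{MomZero} to match moments, invoke the moment/total-variation inequality of Lemma \ref{lemuni1} componentwise, and finish with Lemma \ref{unidiv0}. You are in fact a bit more careful than the paper in two places: you explicitly verify that each $S_i$ inherits slenderness from $S$ (which the paper asserts implicitly when passing to $\div\bs{\tau}_i=0$), and you apply Lemma \ref{unidiv0} to each $\bs{\theta}_i$ separately rather than to $\bs{\tau}$ globally, which is cleaner since each $\bs{\theta}_i$ really is a signed measure times a single fixed direction whereas $\bs{\tau}$ need not be.
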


\begin{proof}
Since $\bs{\mu}$ and $\bs{\nu}$ are $S$-equivalent, their difference $\bs{\tau}:=\bs{\nu}-\bs{\mu}$ is $S$-silent.  
In addition, it follows from Theorem~\ref{divM} that if $S$ is slender
then $\div\bs{\tau}=0$.
By Lemma~\ref{components},
the restriction $\bs{\tau}_i:=\bs{\tau}\mathcal{b}S_i$ is $S_i$-silent and thus $\bs{\mu}_i$ and $\bs{\nu}_i$ are $S_i$-equivalent.
Since either $S$ is compact or $\div\bs{\tau}=0$, the same is true of each
$S_i$, $\bs{\tau}_i$ and we can use Lemma~\ref{MomZero} to obtain that
$\langle\bs{\mu}_i\rangle=\langle\bs{\nu}_i\rangle$ for $i=1, 2, \ldots, n$. Then
\begin{equation}\label{numuTV}
 \|\bs{\nu}\|_{TV}=\sum_{i=1}^n\|\bs{\nu}_i\|_{TV}\ge \sum_{i=1}^n |\langle\bs{\nu}_i\rangle |=\sum_{i=1}^n |\langle\bs{\mu}_i\rangle |= 
 \sum_{i=1}^n\|\bs{\mu}_i\|_{TV}=\|\bs{\mu}\|_{TV}, 
\end{equation}
where the next to last equality follows from the uni-directionality of $\bs{\mu}_i$.   By Lemma~\ref{lemuni1}, equality holds in \eqref{numuTV} if and only if each $\bs{\nu}_i$ is uni-directional, and it must have the direction of $\bs{\mu}_i$ since their moments agree. In particular $\bs{\tau}$ is then
unidimensional, hence if in addition $S$ is slender so that
$\div\bs{\tau}=0$, we get from Lemma~\ref{unidiv0} 
that equality holds in  \eqref{uniminTV} only when
$\bs{\mu}=\bs{\nu}$.
\end{proof}

Note that if $S$ is not slender, 
$\bs{\mu}$ and $\bs{\nu}$ in the previous theorem can be different, 
even when equality holds in  \eqref{uniminTV}. Indeed,
for $n=1$ already, Example~\ref{balldipole} yields such a situation.

\section{Magnetization-to-field operators}
\label{MtoFop}

Let $S\subset \R^3$ be closed  and $Q\subset\R^3\setminus S$ be  compact.
For $\bs{\mu}\in \mathcal{M}(S)^3$ and
$v$ a unit vector in $\R^3$,  the component  of the magnetic field
$\bb{b}(\bs{\mu})$ in the direction $v$ at $x\not \in S$ 
is given,  in view of  \eqref{bDef1}, by
\begin{equation}\label{b3K}
b_v(\bs{\mu})(x):=v\cdot\bb{b}(\bs{\mu})(x)  =-\frac{\mu_0}{4\pi}\int \bb K_v(x-y)\cdot \, d\bs{\mu}(y),
\end{equation}
where 
\begin{equation}
\label{noyauK}
\bb K_v(x)=\frac{v}{|x|^3}  -3  x\frac{v\cdot x}{|x|^5}=  \grad \left( \frac{v\cdot x}{|x|^3}\right).
\end{equation}
Consider  a finite, positive Borel measure  $\rho$
with support contained in $Q$  and let $A:
\mathcal{M}(S)^3\to L^2(Q,\rho)$ be the operator
defined by
\begin{equation}\label{Adef}
A(\bs{\mu})(x):= b_v(\bs{\mu})(x), \qquad x\in Q.
\end{equation}  
Since $\bb{K}_v$ is continuous on $\R^3\setminus \{0\}$ and  $Q$ and $S$ are positively separated, it follows that $b_v$  
is continuous on $Q$ and consequently
$A$ does indeed map  $\mathcal{M}(S)^3$ into $L^2(Q,\rho)$. 
 
%

If $\Psi\in L^2(Q,\rho)$, then using Fubini's Theorem and \eqref{b3K} we have
that
\begin{equation}
\langle \Psi, A(\bs{\mu})\rangle_{ L^2(Q,\rho)}=-\frac{\mu_0}{4\pi}\iint \Psi(x) \bb K_v(x-y)\cdot \, d\bs{\mu}(y)\ d\rho(x)=
\langle A^*( \Psi), \bs{\mu}\rangle,
\end{equation}
where for $x\in S$  the adjoint operator   $A^*$ is given by
\begin{equation}
  \label{b3*W}
  A^*(\Psi)(x):=-\frac{\mu_0}{4\pi}\int \Psi(y)\bb K_v(x-y)\ d\rho(y),\qquad x\in S.
\end{equation}
In view of \eqref{noyauK}, a compact way of re-writing \eqref{b3*W} is
\begin{equation}
  \label{b3*Wbis}
  A^*(\Psi)(x):=-\mu_0 \grad(\grad U^{\rho,\psi}\cdot v)(x),\qquad
U^{\rho,\psi}(x)=-\frac{1}{4\pi}\int \frac{\Psi(y)}{|x-y|}d\rho(y).
\end{equation}

Since $Q$ and $S$ are positively separated it follows as in the proof of Lemma~\ref{Lem4} that $A^*(\Psi)\in C_0(S)^3$ and thus 
  $A^*:(L^2(Q,\rho))^*\sim L^2(Q,\rho)\to C_0(S)^3\subset(\mathcal{M}(S)^3)^*$.   

From the point of view of the inverse problem described in Section \ref{intro},
consisting of recovering $\bs{\mu}$ from measurements of the field of the potential of its divergence in the direction $v$, one should think of
$S$ as a set 
{\it a priori} containing the support of the sources to be
recovered, and of $Q$ as the set on which the component of the field in the 
direction $v$ is measured.
In the proposition below, we single
out two additional assumptions
on the pair $(S,Q)$, namely:
	\begin{enumerate}
		\item[(I)]$\R^3\setminus Q$ is connected,
$\mathcal{L}_3(Q)=0$ and $\mathcal{H}_d(S)>2$.
		\item[(II)] $\R^3\setminus Q$ is connected,
$\mathcal{L}_3(Q)=0$ and there is a smooth complete real analytic surface
$\mathcal{B}$ such that $Q$ lies in a single connected component 
of $\R^3\setminus \mathcal{B}$, while $\mathcal{H}_d(S\cap\mathcal{B})>1$.
	\end{enumerate}

\begin{prop}
\label{Acomp}
Let $S\subset \R^3$ be closed, $Q\subset\R^3\setminus S$  be  compact,  $\rho$ be  a finite, positive Borel measure  
with support contained in $Q$, and $v$ a unit vector in $\R^3$. 
\begin{enumerate}
\item The operator $A:
\mathcal{M}(S)^3\to L^2(Q,\rho)$ defined in \eqref{Adef} is compact.
\item  Each function in the
range of $A^*$ is the restriction to $S$ of a real-analytic 
$\R^3$-valued function on $\R^3\setminus Q$. 
\item  If either assumption \mbox{\rm (I)} or \mbox{\rm (II)} holds,
then $A^*$ is injective, hence $A$ has dense range.
\item If $Q,S,v$ satisfy the assumptions  of
Lemma \ref{Lem4} and the support of $\rho$ contains $Q\cap\mathcal{A}$,
then every element in the kernel of $A$ is $S$-silent.
\end{enumerate}
\end{prop}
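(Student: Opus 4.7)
For (1), I would factor $A$ through $C(Q)$ and apply the Arzel\`a--Ascoli theorem. Because $\textrm{dist}(Q,S)>0$ and $|\bb K_v(z)|=O(|z|^{-3})$ at infinity, both $\sup_{y\in S}|\bb K_v(x-y)|$ and the modulus of continuity $\sup_{y\in S}|\bb K_v(x_1-y)-\bb K_v(x_2-y)|$ are controlled uniformly in $x,x_1,x_2\in Q$, so $A$ sends the $TV$-unit ball of $\mathcal M(S)^3$ into a bounded and equicontinuous subset of $C(Q)$; compactness into $L^2(Q,\rho)$ then follows via the continuous injection $C(Q)\hookrightarrow L^2(Q,\rho)$ (valid since $\rho$ is finite). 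For (2), the representation \eqref{b3*Wbis} presents $A^*(\Psi)$ as a second-order derivative of the Newton potential of the finite measure $\Psi\rho$, which is real-analytic off $\textrm{supp}\,\rho\subset Q$; alternatively, one differentiates \eqref{b3*W} under the integral sign using the $x$-real-analyticity of $\bb K_v(x-y)$ for $x\neq y$, with uniform majorants on compact subsets of $\R^3\setminus Q$.

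For (3), assume $A^*(\Psi)\equiv 0$ on $S$; by (2) it extends real-analytically to the connected open set $\R^3\setminus Q$. Under (I), since the zero set of a nonzero real-analytic function on such a set has Hausdorff dimension at most $2$ while $\dim_{\mathcal H}(S)>2$, the extension vanishes identically on $\R^3\setminus Q$. Under (II), the restriction of $A^*(\Psi)$ to the analytic $2$-manifold $\mathcal B\subset\R^3\setminus Q$ is real-analytic and vanishes on $S\cap\mathcal B$ of Hausdorff dimension $>1$; the same dimension bound applied on the $2$-manifold forces it to vanish on all of $\mathcal B$. Writing $A^*(\Psi)=-\mu_0\grad h$ with $h:=v\cdot\grad U^{\rho,\Psi}$ harmonic on $\R^3\setminus Q$, this gives $\grad h\equiv 0$ on $\mathcal B$, hence $h$ is locally constant there; a Holmgren-type unique continuation (applied to $h-c$, which has zero value \emph{and} zero normal derivative on the non-characteristic analytic surface $\mathcal B$) together with harmonicity and connectedness forces $h$ to be constant on $\R^3\setminus Q$, and decay of $h$ at infinity makes the constant zero. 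In either case $v\cdot\grad U^{\rho,\Psi}\equiv 0$ on $\R^3\setminus Q$. Compactness of $Q$ then implies $U^{\rho,\Psi}$ is constant along every line parallel to $v$ lying outside a cylinder around $Q$; combined with decay at infinity this yields $U^{\rho,\Psi}\equiv 0$ on that exterior, and harmonic unique continuation on the connected set $\R^3\setminus Q$ propagates this to $U^{\rho,\Psi}\equiv 0$ on $\R^3\setminus Q$. Local integrability of $U^{\rho,\Psi}$ together with $\mathcal L_3(Q)=0$ makes $U^{\rho,\Psi}=0$ as a distribution, so $\Psi\rho=\Delta U^{\rho,\Psi}=0$ and $\Psi=0$ $\rho$-a.e. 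Density of $\textrm{range}(A)$ then follows from the standard duality $\overline{\textrm{range}(A)}={}^\perp(\ker A^*)$.

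For (4), if $A(\bs\mu)=0$ then $b_v(\bs\mu)$ vanishes $\rho$-a.e.\ on $Q$; but $b_v(\bs\mu)$ is continuous on $\R^3\setminus S\supset Q$ (because $Q$ and $S$ are positively separated), so it vanishes on $\textrm{supp}\,\rho$, which by hypothesis contains $Q\cap\mathcal A$. Lemma \ref{Lem4} then yields that $\bs\mu$ is $S$-silent.

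The main obstacle I anticipate is case (II) of (3): vanishing of a real-analytic function on a $2$-surface in $\R^3$ does not by itself preclude nontriviality off the surface, so one must exploit the structural fact that $A^*(\Psi)$ is the \emph{gradient} of a harmonic function. This converts ``$A^*(\Psi)=0$ on $\mathcal B$'' into zero Cauchy data for $h-c$ on the non-characteristic analytic surface $\mathcal B$, which is precisely the input needed for a Holmgren-type propagation of zeros across $\mathcal B$.
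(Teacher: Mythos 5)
Your proposal is correct, and parts (1), (2), and (4) coincide with the paper's argument: (1) via Arzel\`a--Ascoli and the embedding $C(Q)\hookrightarrow L^2(Q,\rho)$, (2) via harmonicity of the kernel $\bb K_v$ away from the origin, and (4) by continuity of $b_v(\bs\mu)$ on $Q$ followed by an appeal to Lemma~\ref{Lem4}. Under assumption~(I) in (3), your dimension-count on $\R^3\setminus Q$ is also the paper's argument.

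Under assumption~(II), however, you take a genuinely different route at the key propagation step. Once the dimension bound forces $A^*(\Psi)\equiv 0$ on $\mathcal B$, the paper observes that $\grad(\bb D\cdot v)$ is harmonic componentwise, vanishes on $\mathcal B$ \emph{and} at infinity, and invokes the maximum principle on the component of $\R^3\setminus\mathcal B$ not containing $Q$ (implicitly relying on the Jordan--Brouwer-type decomposition of $\R^3\setminus\mathcal B$), then extends by real analyticity. You instead exploit the gradient structure $A^*(\Psi)=-\mu_0\grad h$ with $h=v\cdot\grad U^{\rho,\Psi}$ harmonic: vanishing of the full gradient on $\mathcal B$ yields simultaneously that $h$ is locally constant on $\mathcal B$ and that $\partial_n h=0$, i.e.\ zero Cauchy data for $h-c$ on the non-characteristic analytic surface $\mathcal B$, and Holmgren (or Cauchy--Kovalevskaya uniqueness for analytic solutions, or simply iterated use of $\Delta h=0$ to kill all normal derivatives) gives $h\equiv c$ near $\mathcal B$ and hence on $\R^3\setminus Q$, with $c=0$ by decay. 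What your approach buys: it sidesteps the need for the Jordan--Brouwer separation theorem and for a maximum-principle argument on an unbounded domain with boundary value $0$ and decay at infinity; the propagation across $\mathcal B$ is entirely local. What the paper's approach buys: the maximum principle is more elementary than Holmgren, and the paper has in any case developed Theorem~\ref{JB} for use in Lemma~\ref{Lem4}, so the separation theorem comes for free. Both routes hinge on the structural fact that $A^*(\Psi)$ is not a generic real-analytic field but arises from a harmonic potential --- you identify this explicitly as the crux, which is the right diagnosis.
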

\begin{proof}
	
Let $h:=\text{dist}(S,Q)>0$.
Outside an open ball of radius $h$, the kernel $\bb{K}_v$ and its  first order derivatives  are bounded, say  by some constant $C$.
Thus, if $\bs{\mu}_n$ is a 
sequence in the unit ball of $\mathcal{M}(S)^3$, then $|b_u(\bs{\mu}_n)|$ and 
its partial derivatives 
are bounded by $C$ on $Q$. 
Therefore $b_v(\bs{\mu}_n)$ is a uniformly bounded 
family of equicontinuous
functions on the compact set $Q$, hence it
is relatively compact in the uniform topology by Ascoli's Theorem.
{\it A fortiori}, this family is relatively compact in $L^2(Q,\rho)$.
Besides, since $\bb{K}_v$ is a harmonic vector field in $\R^3\setminus\{0\}$, 
differentiating \eqref{b3*W} under the integral sign shows
that the components of $A^*(\Psi)$ are harmonic in $\R^3\setminus Q$,
thus, {\it a fortiori} real analytic.

To see that $A$ has dense range if either (I) or (II) is satisfied, 
we  prove that $A^*$ is injective in this case. 
For this, assume that $A^*\Psi=0$ for some $\Psi\in L^2(Q,\rho)$ and let us 
show that $\Psi$ is zero $\rho$-a.e. 
Assume first that (II) holds, and consider the $\R^3$-valued function
$$
\bb D(x) =  \frac{1}{4\pi}\int \Psi(y)\frac{x-y}{|x-y|^3}\ d\rho(y),\qquad
x\in\R^3\setminus Q.
$$ 
Arguing as we did to get \eqref{DelPhi} and observing that $L^2(Q,\rho)\subset L^1(Q,\rho)$
since $\rho$ is finite, we find that $\bb D$ extends to a locally integrable function on $\R^3$ with
$\div\bb D= \Psi d\rho$ as distributions.
Note that $\grad(\bb D\cdot v)$ is a harmonic vector field on $\R^3\setminus Q$ 
which is equal to $-A^*\Psi/\mu_0$ on $S$, hence it vanishes there.
Since $S\cap\mathcal{B}$ has Hausdorff dimension strictly greater than 1 
and $\mathcal{B}$ is real analytic, 
it holds that $\grad(\bb D\cdot v)$ vanishes on $\mathcal{B}$.
Moreover, as $\bb K_v$ vanishes at infinity and $\Psi$ 
has compact support,  $\grad(\bb D\cdot v)$ vanishes at infinity as well.
Thus, by the maximum principle, it must vanish in the component of
$\R^3\setminus\mathcal{B}$ which does not contain $Q$, 
therefore also in $\R^3\setminus Q$ by real analyticity and since 
it is connected. This means that 
$\bb D\cdot v$ is constant in $\R^3\setminus Q$, and it is in fact 
identically zero because it is clear from the 
compactness of $Q$ that $\bb D$ vanishes at infinity.
Now, it holds that  $\bb D=\grad (N*(\psi d\rho))$, where 
$N(y)=-1/(4\pi|y|)$  is the Newton kernel already used in the proof of 
Proposition \ref{LocIntPhi}. Hence, $N*(\psi d\rho)$ must be constant on half lines 
parallel to $v$ contained in $\R^3\setminus Q$, and since it vanishes at 
infinity while $Q$ is compact we find that  $N*(\psi d\rho)$ is 
identically zero in  $\R^3\setminus Q$. Now, 
being the Newton potential of a finite measure, $N*(\psi d\rho)$ is a
locally integrable function and, since 
$\mathcal{L}_3(Q)=0$, we just showed that it is zero almost everywhere. 
Hence it is the zero distribution, and so is its 
weak Laplacian $\psi d\rho$. Consequently $\psi$ is zero $\rho$-a.e., as 
desired. If (I) holds instead of (II) the proof of (c) is similar but easier, 
because we conclude directly that  $\grad(\bb D\cdot v)=0$ on 
$\R^3\setminus Q$, since it is harmonic there and vanishes on $S$ which has 
Hausdorff dimension strictly greater than 2.  Finally, to prove (d), observe that if $A(\bs{\mu})=0$ a.e. 
with respect to $\rho$, then by continuity $A(\bs{\mu})=0$ on the support of $\rho$ and so on $Q\cap\mathcal{A}$.
Thus,  by Lemma \ref{Lem4}  
$ \bs{\mu} $ is $S$-silent whenever $ \bs{\mu}$ is in the kernel of $A$.
\end{proof}


\begin{rmk}
\label{rmkdens}
Note that if $Q$ contains a nonempty open ball $B$ 
(then of course $\mathcal{L}_3(Q)\neq0$ and neither (I) nor (II) 
is satisfied), and if moreover the 
support of $\rho$ contains $B$, then the image of $A$ is not dense in $L^2(Q,\rho)$ since it consists of functions harmonic in $B$.
\end{rmk}

The density of
$\textrm{ran} A$ plays no role in the forthcoming results, 
so from the sheer mathematical point of view
we may  forget about  Proposition \ref{Acomp} (c).
However, it is 
more often satisfied than not. For instance, in paleomagnetism, $Q$ would 
be a rectangular region in a plane and
$S$ a rock sample which is either volumic 
(then (I) is met) or sanded down to a thin slab (then (II) is met, with 
$\mathcal{B}$ a plane). In practice, if $A$ has dense range
then the data can be explained arbitrary well in terms of similarity
between the measured and modeled fields,
at the cost of proposing a model for $\bs{\mu}$ which is unreasonably large 
and therefore non-physical, 
see Theorem \ref{lamto0Thm} point (b). This phenomenon is typical of 
ill-posed  problems and a compelling reason why regularization is needeed.
That is, one must trade-off between the precision of the model against
available  data and its physical relevance, {\it e.g.} the regularization parameter $\lambda$ should not be made too small in  Theorem \ref{lamto0Thm}.


\section{Regularization by penalizing the total variation}
\label{regu}

In this section, we consider the inverse magnetization problem of recovering 
$\bs{\mu}\in\mathcal{M}(S)^3$ from the knowledge of $A(\bs{\mu})$, where $A$ is the operator defined in \eqref{Adef}. We will
study  the regularization scheme EP-2, based on \eqref{crit0}, 
that penalizes the  total variation of the 
candidate approximant, and prove that solutions to EP-2 exist and are 
necessarily ``localized'', in the sense that their 
support has dimension at most 
2 if $S$ has nonempty interior in $\R^3$
and dimension at most 1 if 
$S$ is contained in some unbounded 
 analytic surface where it has nonempty interior.
The existence of a solution to EP-2, as well as the optimality 
condition given in Theorem \ref{VPThm}, fall under the scope of
\cite[prop. 3.6]{BrePikk} and could just have been referenced. We nevertheless 
provide a proof, partly because it 
may be interesting in its own right as it is independent from the 
Fenchel duality used in \cite{BrePikk}, 
but mainly  because we want to discuss non-uniqueness in a specific manner.
We conclude this section with
a `consistency' result showing that solutions to EP-2 approach those of EP-1, 
in the limit that the regularization parameter $\lambda$ and the 
(additive) perturbation on the data vanish in a controlled manner. 
Our account of this regularization theory is  new inasmuch as  it 
includes the asymptotic behavior  
of total variation measures of the solutions, 
and deals with narrow convergence (not just weak-*).

Hereafter, as in Section \ref{MtoFop},
we let $S\subset \R^3$ be closed, $Q\subset\R^3\setminus S$  be  compact,  
$\rho$  be a finite, positive Borel measure 
supported in $Q$, and $v$ a unit vector in $\R^3$. The operator $A$ is 
then defined by \eqref{Adef}.
 For $\bs{\mu}\in\mathcal{M}(S)^3$, $f\in L^2(Q,\rho)$, and $\lambda>0$, we recall from 
\eqref{defcrit0} the definition of $\mathcal{F}_{f,\lambda}$:
\begin{equation}
\label{defcrit}
\mathcal{F}_{f,\lambda}(\bs{\mu}):=
\|f-A\bs{\mu}\|_{L^2(Q,\rho)}^2+\lambda \|\bs{\mu}\|_{TV},
\end{equation}
and from \eqref{crit0} that  $\bs{\mu_\lambda}\in \mathcal{M}(S)^3$ denotes a minimizer of  $\mathcal{F}_{f,\lambda}$ whose existence is proved in Theorem~\ref{VPThm} below; {\it i.e.}, 
\begin{equation}
\label{crit}
\mathcal{F}_{f,\lambda}(\bs{\mu_\lambda})=\inf_{\bs{\mu}\in\mathcal{M}(S)^3} \mathcal{F}_{f,\lambda}(\bs{\mu}).\end{equation}

\begin{theorem}\label{VPThm}
Notation and assumptions being as above, 
given $f\in L^2(Q,\rho)$ there is a solution  to (\ref{crit}). 
A $\R^3$-valued measure $\bs{\mu}_\lambda\in\mathcal{M}(S)^3$ is 
such a solution if and only if:
\begin{equation}
\label{CP}
\begin{array}{ll}
A^*(f-A\bs{\mu}_\lambda)&=\frac{\lambda}{2} \bb{u}_{\bs{\mu_\lambda}} 
\qquad |\bs{\mu}_\lambda|\text{\rm -a.e. and}\\
\left |A^*(f-A\bs{\mu}_\lambda)\right|&\leq \frac{\lambda}{2}\quad\text{\rm everywhere on } S.
\end{array}
\end{equation}
Moreover, $\bs{\mu}'_\lambda\in\mathcal{M}(S)^3$ is another solution 
if and only if:
\begin{enumerate}
\item $A(\bs{\mu}'_\lambda-\bs{\mu}_\lambda)=0$,
\item there is a $|\bs{\mu}_\lambda|$-measurable non-negative function $g$
and a positive measure $\nu_s\in\mathcal{M}(S)$, singular 
to $|\bs{\mu}_\lambda|$ and
supported on  the set  $\left\{x\in S \colon  \left|A^*(f-A\bs{\mu}_\lambda)(x)\right|=\lambda/2 \right\}$, such that
\begin{equation}
\label{equivsoldec}
d\bs{\mu}'_\lambda=gd\bs{\mu}_\lambda+2\frac{A^*(f-A\bs{\mu}_\lambda)}{\lambda}d \nu_s.
\end{equation}
\end{enumerate}
\end{theorem}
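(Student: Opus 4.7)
The plan is to apply the direct method of the calculus of variations in $\mathcal{M}(S)^3=(C_0(S)^3)^*$, then derive \eqref{CP} as a first-order convex optimality condition, and finally carry out a Radon--Nikodym analysis to characterize all minimizers. For existence, I would take a minimizing sequence $\bs{\mu}_n$ for $\mathcal{F}_{f,\lambda}$; the penalty term bounds its total variation, so Banach--Alaoglu yields a weak-$*$ convergent subsequence $\bs{\mu}_n\rightharpoonup^*\bs{\mu}_\lambda$. Weak-$*$ lower semicontinuity of $\|\cdot\|_{TV}$ is standard; for the fidelity term, Proposition~\ref{Acomp} ensures $A^*\Psi\in C_0(S)^3$ for every $\Psi\in L^2(Q,\rho)$, so $\langle\Psi,A\bs{\mu}_n\rangle=\langle A^*\Psi,\bs{\mu}_n\rangle$ converges, which gives $A\bs{\mu}_n\rightharpoonup A\bs{\mu}_\lambda$ weakly in $L^2$, and $\|f-A\cdot\|_{L^2}^2$ is weakly lsc. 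Hence $\bs{\mu}_\lambda$ is a minimizer.

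For the necessity of \eqref{CP}, since $\bs{\mu}_\lambda$ minimizes we have, for every $\bs{\nu}\in\mathcal{M}(S)^3$ and $t\in(0,1]$, $\mathcal{F}_{f,\lambda}(\bs{\mu}_\lambda+t(\bs{\nu}-\bs{\mu}_\lambda))\ge\mathcal{F}_{f,\lambda}(\bs{\mu}_\lambda)$. Expanding the quadratic residual and bounding the TV increment via $\|\bs{\mu}_\lambda+t(\bs{\nu}-\bs{\mu}_\lambda)\|_{TV}\le(1-t)\|\bs{\mu}_\lambda\|_{TV}+t\|\bs{\nu}\|_{TV}$, then dividing by $t$ and letting $t\to 0^+$, yields the variational inequality
\[
2\langle A^*(f-A\bs{\mu}_\lambda),\bs{\nu}-\bs{\mu}_\lambda\rangle\le\lambda(\|\bs{\nu}\|_{TV}-\|\bs{\mu}_\lambda\|_{TV}),\qquad\forall\bs{\nu}\in\mathcal{M}(S)^3.
\]
Set $\bb{w}:=(2/\lambda)A^*(f-A\bs{\mu}_\lambda)\in C_0(S)^3$. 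The choices $\bs{\nu}=0$ and $\bs{\nu}=2\bs{\mu}_\lambda$ give $\langle\bb{w},\bs{\mu}_\lambda\rangle=\|\bs{\mu}_\lambda\|_{TV}$; substituting back, the residual $\langle\bb{w},\bs{\nu}\rangle\le\|\bs{\nu}\|_{TV}$ for all $\bs{\nu}$ forces $|\bb{w}|\le 1$ pointwise on $S$. Combining $\int\bb{w}\cdot\bb{u}_{\bs{\mu}_\lambda}\,d|\bs{\mu}_\lambda|=\|\bs{\mu}_\lambda\|_{TV}$ with $|\bb{w}|\le 1$ and $|\bb{u}_{\bs{\mu}_\lambda}|=1$ then forces $\bb{w}=\bb{u}_{\bs{\mu}_\lambda}$ $|\bs{\mu}_\lambda|$-a.e., which is \eqref{CP}. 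Sufficiency follows from the algebraic identity
\[
\mathcal{F}_{f,\lambda}(\bs{\nu})-\mathcal{F}_{f,\lambda}(\bs{\mu}_\lambda)=\|A(\bs{\nu}-\bs{\mu}_\lambda)\|_{L^2}^2+\lambda(\|\bs{\nu}\|_{TV}-\langle\bb{w},\bs{\nu}\rangle)\ge 0,
\]
the last inequality coming from $\|\bb{w}\|_\infty\le 1$.

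For the characterization of all minimizers, suppose $\bs{\mu}'_\lambda$ also minimizes. The parallelogram identity in $L^2(Q,\rho)$ applied to $f-A\bs{\mu}_\lambda$ and $f-A\bs{\mu}'_\lambda$, combined with convexity of $\|\cdot\|_{TV}$ evaluated at the midpoint $(\bs{\mu}_\lambda+\bs{\mu}'_\lambda)/2$, yields $\mathcal{F}_{f,\lambda}((\bs{\mu}_\lambda+\bs{\mu}'_\lambda)/2)\le \min-\tfrac14\|A\bs{\mu}_\lambda-A\bs{\mu}'_\lambda\|^2$, which forces $A\bs{\mu}_\lambda=A\bs{\mu}'_\lambda$ and establishes (a). Both therefore satisfy \eqref{CP} with the same $\bb{w}$, so $|\bs{\mu}'_\lambda|$ is concentrated on $E:=\{x\in S:|\bb{w}(x)|=1\}$ with direction $\bb{w}$ a.e.\ there. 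Lebesgue-decompose $\bs{\mu}'_\lambda=\bs{\mu}'_{ac}+\bs{\mu}'_s$ with $\bs{\mu}'_{ac}\ll|\bs{\mu}_\lambda|$ and $\bs{\mu}'_s\perp|\bs{\mu}_\lambda|$. Since $\bb{u}_{\bs{\mu}_\lambda}=\bb{w}$ on $\mathrm{supp}(\bs{\mu}_\lambda)$ and $\bb{u}_{\bs{\mu}'_\lambda}=\bb{w}$ $|\bs{\mu}'_\lambda|$-a.e., the Radon--Nikodym density of $\bs{\mu}'_{ac}$ with respect to $|\bs{\mu}_\lambda|$ must take the form $g\,\bb{u}_{\bs{\mu}_\lambda}$ with a scalar $g\ge 0$, so $d\bs{\mu}'_{ac}=g\,d\bs{\mu}_\lambda$; the singular part has direction $\bb{w}=(2/\lambda)A^*(f-A\bs{\mu}_\lambda)$ on $E$, so $d\bs{\mu}'_s=(2/\lambda)A^*(f-A\bs{\mu}_\lambda)\,d\nu_s$ with $\nu_s:=|\bs{\mu}'_s|$ singular to $|\bs{\mu}_\lambda|$ and supported on $E$, yielding \eqref{equivsoldec}. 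Conversely, any $\bs{\mu}'_\lambda$ of this form has $A\bs{\mu}'_\lambda=A\bs{\mu}_\lambda$ and manifestly verifies \eqref{CP}, so minimizes by the sufficiency direction above.

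The main obstacle will be the measure-theoretic bookkeeping of the last step: translating the single algebraic identity $\bb{u}_{\bs{\mu}'_\lambda}=\bb{w}$ $|\bs{\mu}'_\lambda|$-a.e.\ into the explicit Lebesgue-type decomposition of (b), and verifying carefully that the scalar density $g$ must be non-negative (because the direction of the absolutely continuous part must agree with $\bb{u}_{\bs{\mu}_\lambda}$) and that $\nu_s$ is concentrated precisely on the contact set $\{|A^*(f-A\bs{\mu}_\lambda)|=\lambda/2\}$ rather than on any larger subset of $S$.
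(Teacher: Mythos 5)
Your proof is correct, but it follows a different technical route than the paper for all three of the non-trivial steps, so a comparison is worthwhile. For necessity of \eqref{CP}, the paper argues by two separate perturbation families: Gâteaux differentiability of the $L^1(|\bs{\mu}_\lambda|)$-norm (via Beauzamy) for perturbations absolutely continuous with respect to $|\bs{\mu}_\lambda|$, giving the first equation in \eqref{CP}, and then a singular Dirac perturbation at any point violating the bound to obtain the second inequality by contradiction. You instead derive a single variational inequality $\langle\bb{w},\bs{\nu}-\bs{\mu}_\lambda\rangle\le\|\bs{\nu}\|_{TV}-\|\bs{\mu}_\lambda\|_{TV}$ from convexity of $\|\cdot\|_{TV}$, then extract both halves of \eqref{CP} by testing with $\bs{\nu}=0,2\bs{\mu}_\lambda$ and with dipoles -- this is the cleaner ``subdifferential'' viewpoint and buys you an economy of means: no Gâteaux differentiability lemma, and the pointwise bound $|\bb{w}|\le1$ falls out of the dual characterization of $\|\cdot\|_{TV}$ rather than a separate contradiction. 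For sufficiency you also have an advantage: your algebraic identity $\mathcal{F}_{f,\lambda}(\bs{\nu})-\mathcal{F}_{f,\lambda}(\bs{\mu}_\lambda)=\|A(\bs{\nu}-\bs{\mu}_\lambda)\|^2+\lambda(\|\bs{\nu}\|_{TV}-\langle\bb{w},\bs{\nu}\rangle)$ (which uses $\langle\bb{w},\bs{\mu}_\lambda\rangle=\|\bs{\mu}_\lambda\|_{TV}$) is considerably shorter than the paper's decomposition into $E_\pm$ and its chain of inequalities \eqref{sufopt}--\eqref{optmeas}. Finally, for the characterization of all minimizers, the paper tracks equality cases back through its sufficiency estimates, whereas you get (a) from the parallelogram identity (strict convexity of the fidelity term) and (b) by observing that both minimizers satisfy \eqref{CP} with the same $\bb{w}$ (since $A\bs{\mu}_\lambda=A\bs{\mu}'_\lambda$), then Lebesgue-decomposing $\bs{\mu}'_\lambda$ against $|\bs{\mu}_\lambda|$; this is again a bit slicker and reaches the same conclusion.

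One small imprecision at the very end: you write that ``any $\bs{\mu}'_\lambda$ of this form has $A\bs{\mu}'_\lambda=A\bs{\mu}_\lambda$,'' but \eqref{equivsoldec} alone does not imply this (take $g\equiv 2$, $\nu_s=0$). For the converse direction you must assume both (a) and (b); then (a) gives $A\bs{\mu}'_\lambda=A\bs{\mu}_\lambda$, hence the same $\bb{w}$, after which your verification of \eqref{CP} for $\bs{\mu}'_\lambda$ is correct. This is a phrasing slip rather than a gap, since (a) is an explicit hypothesis of the theorem.
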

\begin{proof}Fix $\lambda>0$ and let $\bs{\mu}_n$ a minimizing sequence for the right hand side of (\ref{crit}).
By construction $\|\bs{\mu}_n\|_{TV}$ is bounded, hence we can find a subsequence 
that converges weak-$*$ to some $\bs{\mu}_\lambda$, by the Banach-Alaoglu 
Theorem. Renumbering if necessary,
let us denote this subsequence by $\bs{\mu}_n$ again. The Banach-Alaoglu
Theorem also entails that
\begin{equation}
\label{varl}
\|\bs{\mu}_\lambda\|_{TV} \leq\liminf_n \|\bs{\mu}_n\|_{TV}.
\end{equation}
Moreover, since $A$ is compact, 
$f-A(\bs{\mu}_n)$ converges to  $f-A(\bs{\mu}_\lambda)$ in $L^2(Q,\rho)$, hence
\begin{equation}
\label{convL2}
\|f-A(\bs{\mu}_\lambda)\|_{L^2(Q,\rho)}=\lim_n\|f-A(\bs{\mu}_n)\|_{L^2(Q,\rho)}.
\end{equation}
Because $\bs{\mu}_n$ is minimizing, it now follows from (\ref{varl})
and (\ref{convL2}) that $\bs{\mu}_\lambda$ meets (\ref{crit}) and that
(\ref{varl}) is both an equality and a true limit.

Let now $\bs{\nu}\in\mathcal{M}(S)^3$ be absolutely continuous with respect to
$|\bs{\mu}_\lambda|$ with Radon-Nykodim derivative 
$\bs{\nu}_a\in (L^1(\bs{\mu}_\lambda))^3$; that is to say:
$d\bs{\nu}=\bs{\nu}_ad|\bs{\mu}_\lambda|$.

We evaluate $\mathcal{F}_{f,\lambda}(\bs{\mu}_\lambda+t\bs{\nu})$ for 
small $t$.
On the one hand,
\begin{equation}
\label{expq}
\|f-A(\bs{\mu}_\lambda+t\bs{\nu})\|_{L^2(Q,\rho)}^2=
\|f-A(\bs{\mu}_\lambda)\|_{L^2(Q,\rho)}^2-2t\langle f-A(\bs{\mu}_\lambda)\,,\,A(\bs{\nu})   \rangle
+t^2\|A(\bs{\nu})\|_{L^2(Q,\rho)}^2.
\end{equation}
On the other hand, since it has unit norm $|\bs{\mu}_\lambda|$-a.e.,
the Radon-Nykodim derivative
$\bb{u}_{\bs{\mu}_\lambda}$ has a unique norming functional
 when viewed as an element of 
$(L^1(\bs{\mu}_\lambda))^3$,
given by
\[
\bs{\Psi}\mapsto\int\bs{\Psi}\cdot\bb{u}_{\bs{\mu}_\lambda}\ d|\bs{\mu_\lambda}|,
\qquad \bs{\Psi}\in (L^1(\bs{\mu}_\lambda))^3.
\]
Hence, the $(L^1(\bs{\mu}_\lambda))^3$-norm is G\^ateaux differentiable at
$\bb{u}_{\bs{\mu}_\lambda}$ \cite[Part 3, Ch. 1, Prop. 2, Remark 1]{Beauzamy}
and we get that
\begin{equation}
\label{Gateaux}
\|\bs{\mu}_\lambda+t\bs{\nu}\|_{TV}=\int|\bb{u}_{\bs{\mu}_\lambda}+t\bs{\nu}_a|\,d|\bs{\mu}_\lambda|=\|\bs{\mu}_\lambda\|_{TV}+t
\int\bs{\nu_a}\cdot\bb{u}_{\bs{\mu}_\lambda}d|\bs{\mu}_\lambda| +t\varepsilon(t),
\end{equation}
where $\varepsilon(t)\to0$ when $t\to0$. From (\ref{expq}) and (\ref{Gateaux}),
we gather that
\[
\mathcal{F}_{f,\lambda}(\bs{\mu}_\lambda+t\bs{\nu})-
\mathcal{F}_{f,\lambda}(\bs{\mu}_\lambda)=
-2t\langle A^*(f-A(\bs{\mu}_\lambda))\,,\,\bs{\nu}   \rangle+t\lambda
\int\bs{\nu_a}\cdot\bb{u}_{\bs{\mu}_\lambda}\ d|\bs{\mu_\lambda}|+o(t).
\]
The left hand side is nonnegative by definition of
$\bs{\mu}_\lambda$, so the coefficient of $t$ in the right hand side 
is zero otherwise we could adjust the sign for small $|t|$.
Consequently
\[
\int\left(-2A^*(f-A(\bs{\mu}_\lambda))+\lambda\bb{u}_{\bs{\mu}_\lambda}
\right)\cdot\bs{\nu}_a\,d|\bs{\mu}_\lambda|=0,\qquad
\bs{\nu}_a\in L^1(|\bs{\mu}_\lambda|),
\]
which implies the first equation in (\ref{CP}).

Assume next that the second inequality in (\ref{CP}) is violated:
\begin{equation}
\label{contsi}
|A^*(f-A\bs{\mu}_\lambda)|(x)>\lambda/2
\end{equation} for some $x\in S$.
Then $|\bs{\mu}_\lambda|(\{x\})=0$ by the first part of (\ref{CP})
just proven, and the measure 
\begin{equation}
\label{defdels}
\bs{\nu}=\frac{A^*(f-A\bs{\mu}_\lambda)(x)}{|A^*(f-A\bs{\mu}_\lambda)|(x)}\delta_x,
\end{equation}
is singular with respect to $\bs{\mu}_\lambda$.
Hence, for $t>0$,
\begin{equation}
\label{tvsp}
\|\bs{\mu}_\lambda+t\bs{\nu}\|_{TV}=
\|\bs{\mu}_\lambda\|_{TV}+t\|\bs{\nu}\|_{TV}=
\|\bs{\mu}_\lambda\|_{TV}+t,
\end{equation}
and it follows from (\ref{expq}), (\ref{defdels})
and (\ref{tvsp}) that
\[\mathcal{F}_{f,\lambda}(\bs{\mu}_\lambda+t\bs{\nu})-
\mathcal{F}_{f,\lambda}(\bs{\mu}_\lambda)=-2
t|A^*(f-A\bs{\mu}_\lambda)|(x)+t\lambda +O(t^2)
\]
which is strictly negative for $t>0$ small enough, in view of (\ref{contsi}). 
But this cannot hold since $\bs{\mu}_\lambda$ is a minimizer of 
(\ref{defcrit}), thereby proving the second inequality in (\ref{CP})
by contradiction.

Conversely, assume that (\ref{CP}) holds. Let $\bs{\nu}\in\mathcal{M}(S)^3$ and write the Radon-Nykodim decomposition of $\bs{\nu}$ with respect to $\bs{\mu}_\lambda$ as
$d\bs{\nu}=\bs{\nu}_ad|\bs{\mu}_\lambda|+d\bs{\nu}_s$, where
$\bs{\nu}_a\in L^1(|\bs{\mu}_\lambda|)$ and $\bs{\nu}_s$ is singular 
with respect to $|\bs{\mu}_\lambda|$. Setting $t=1$ in
(\ref{expq}), we get that
\begin{equation}
\label{sufopt}
\begin{array}{rl}\|f-A(\bs{\mu}_\lambda+\bs{\nu})\|_{L^2(Q,\rho)}^2&-\quad
\|f-A(\bs{\mu}_\lambda)\|_{L^2(Q,\rho)}^2\geq-2\langle f-A(\bs{\mu}_\lambda)\,,\,A(\bs{\nu})   \rangle\\
	&=-2\int A^*(f-A(\bs{\mu}_\lambda))\cdot\bs{\nu}_a\, d|\bs{\mu}_\lambda| -
		2\langle A^*(f-A(\bs{\mu}_\lambda))\,,\,\bs{\nu}_s   \rangle\\
	&=-\lambda\int (\bs{\nu}_a\cdot \bb{u}_{\bs{\mu}_\lambda})d|\bs{\mu}_\lambda| -
		2\langle A^*(f-A(\bs{\mu}_\lambda))\,,\,\bs{\nu}_s   \rangle\\
	&\geq-\lambda \int (\bs{\nu}_a\cdot \bb{u}_{\bs{\mu}_\lambda})d|\bs{\mu}_\lambda| -
		\lambda\|\bs\nu_s\|_{TV}.
\end{array}
\end{equation}
In another connection, 
 we have that
\[
\|\bs{\mu}_\lambda+\bs{\nu}\|_{TV}=\int|\bb{u}_{\bs{\mu}_\lambda}+
\bs{\nu}_a|\ d|\bs{\mu}_\lambda|+
\|\bs\nu_s\|_{TV}=
\int(1+2\bs{\nu}_a\cdot\bb{u}_{\bs{\mu}_\lambda}
+|\bs{\nu}_a|^2)^{1/2}\ d|\bs{\mu}_\lambda|
+
\|\bs\nu_s\|_{TV},
\]and since 
$|\bb{u}_{\bs{\mu}_\lambda}|=1$ a.e. with respect to $|\bs{\mu}_\lambda|$,
we obtain:
\begin{equation}
\label{locm}
(1+2\bs{\nu}_a\cdot\bb{u}_{\bs{\mu}_\lambda}
+|\bs{\nu}_a|^2)^{1/2}\geq|1+\bs{\nu}_a\cdot
\bb{u}_{\bs{\mu}_\lambda}
|,\qquad |\bs{\mu}_\lambda|\text{\rm -a.e.}
\end{equation}
Thus, if we let $E_+$ (resp. $E_-$) be the subset of $\text{\rm supp}\,|\bs{\mu}_\lambda|$
where $\bb{u}_{\bs{\mu}_\lambda}\cdot
\bs{\nu}_a>-1$ (resp. $\leq -1$), we obtain:
\begin{equation}
\label{1dec}
\lambda\|\bs{\mu}_\lambda+\bs{\nu}\|_{TV}
\geq \lambda\int_{E_+}(1+\bs{\nu}_a\cdot \bb u_{\bs{\mu}_\lambda}) 
d|\bs{\mu}_\lambda| +
\lambda\|\bs\nu_s\|_{TV},
\end{equation}
Besides, it follows from (\ref{sufopt}) that
\begin{equation}
\label{2dec}
\|f-A(\bs{\mu}_\lambda+\bs{\nu})\|_{L^2(Q,\rho)}^2-
\|f-A(\bs{\mu}_\lambda)\|_{L^2(Q,\rho)}^2\geq
-\lambda \int_{E_+} (\bs{\nu}_a\cdot \bb{u}_{\bs{\mu}_\lambda})d|\bs{\mu}_\lambda| +\lambda \int_{E_-} d|\bs{\mu}_\lambda|
- \lambda\|\bs\nu_s\|_{TV}.
\end{equation}
Adding up (\ref{1dec}) and (\ref{2dec}), using that 
$\|\bs{\mu}_\lambda\|_{TV}=\int_{E_+}d|\bs{\mu}_\lambda|+
\int_{E_-}d|\bs{\mu}_\lambda|$, we obtain:
\begin{equation}
\label{optmeas}
\mathcal{F}_{f,\lambda}(\bs{\mu}_\lambda+\bs{\nu})-
\mathcal{F}_{f,\lambda}(\bs{\mu}_\lambda)\geq0,
\end{equation}
thereby showing that $\bs{\mu}_\lambda$ indeed meets (\ref{crit}).

Finally, observe that in the previous estimates
we neglected the term $t^2\|A\bs{\nu}\|_{L^2(Q,\rho)}^2$ 
in (\ref{expq}) and the term $|\bs{\nu}_a|^2-(\bs{\nu}_a\cdot\bb{u}_{\bs{\mu}_\lambda})^2$ in (\ref{locm}), as well as the term
$\lambda\int_{E_-}(|\bs{\nu}_a\cdot\bb{u}_{\bs{\mu}_\lambda}|-1)d|\bs{\mu}_\lambda|$
in \eqref{1dec}
and \eqref{2dec},
along with the term $\lambda\|\bs{\nu}_s\|_{TV}-2\langle A^*(f-A(\bs{\mu}_\lambda))\,,\,\bs{\nu}_s\rangle$ in (\ref{sufopt}). Hence, equality holds in 
(\ref{optmeas}) if and only if they are all zero.
Thus, for $\bs{\mu}^\prime_\lambda=\bs{\mu}_\lambda+\bs{\nu}$ to be another solution to
(\ref{crit}), it is necessary and sufficient that  $A\bs{\nu}=0$ and
$\bs{\nu}_a=h\bb{u}_{\bs{\mu}_\lambda}$ 
 with $h$ a real-valued function such that $h\geq-1$, a.e. with respect to
 $|\bs{\mu}_\lambda|$, while
$\bs{\nu}_s$ is supported on the subset of $S$ 
where $|A^*(f-A\bs{\mu}_\lambda)|=\lambda/2$
and $A^*(f-A\bs{\mu}_\lambda)=(\lambda/2) \bb{u}_{\bs{\nu}_s}$
at $|\bs{\nu}_s|$-a.e. point.  Thus,
$\bs{\mu}^\prime_\lambda=g\bs{\mu}_\lambda+\bs{\nu}_s$ with $g:=1+h\geq0$,
which gives us (a) and (b).
\end{proof}

That any two minimizers of (\ref{defcrit}) must differ 
by a member of the kernel of $A$ is but a simple consequence of
the strict convexity of the $L^2(Q,\rho)$-norm. In particular,
if the assumptions on $Q,S,\mathcal{A}$ and $v$ of
Lemma \ref{Lem4} hold  and the support of $\rho$ contains $Q\cap\mathcal{A}$,
then any two minimizers are  $S$-equivalent,
by (d) of Proposition \ref{Acomp}. The second assertion of Theorem \ref{VPThm}
means that when  (a) holds, then $\|\bs{\mu}_\lambda'\|_{TV}=\|\bs{\mu}_\lambda\|_{TV}$ if and only if (b) holds.
\begin{corollary}
\label{corspar}
Assumptions and notation being as in Theorem \ref{VPThm}, assume in addition that $S$ is contained in the unbounded connected component of
$\R^3\setminus S$.
Then, the union of the supports of all minimizers of (\ref{defcrit}), for fixed 
$f$ and $\lambda>0$, is contained in a 
finite union of points, embedded curves and surfaces, 
each of which is real-analytic and bounded.
If $\mathcal B$ (resp. $\mathcal{C}$)  is an unbounded connected  real 
analytic surface (resp. curve)
such that $\mathcal{B}\cap Q$ does not disconnect $\mathcal{B}$
(resp. $\mathcal{C}\cap Q=\emptyset$), then the 
aforementioned union of supports has an intersection with $\mathcal{B}$ 
(resp. $\mathcal{C}$)
which is contained in a 
finite union of points and embedded real analytic curves (resp. points).
\end{corollary}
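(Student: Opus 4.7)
The plan is to identify the supports of \emph{all} minimizers of \eqref{crit} with a single ``contact set'' coming from the optimality condition \eqref{CP}, and then to analyze that set by real analyticity. By Theorem~\ref{VPThm}(b), any minimizer $\bs{\mu}'_\lambda$ decomposes as in \eqref{equivsoldec}, so its support is contained in $\text{\rm supp}\,|\bs{\mu}_\lambda|\cup\text{\rm supp}\,\nu_s$, where $\nu_s$ is supported in
\[
E:=\left\{x\in S\colon |\bb F(x)|=\lambda/2\right\},\qquad \bb F:=A^*(f-A\bs{\mu}_\lambda);
\]
moreover the first line of \eqref{CP} gives $|\bb F|=\lambda/2$ already $|\bs{\mu}_\lambda|$-a.e., so $\text{\rm supp}\,|\bs{\mu}_\lambda|\subset E$ as well. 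Hence the union of the supports of all minimizers is contained in $E$, and it suffices to show $E$ has the claimed structure.

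I would next establish that $E$ is a compact subset of the unbounded connected component $\mathcal U$ of $\R^3\setminus Q$ containing $S$. By Proposition~\ref{Acomp}(b), $\bb F$ extends to a real-analytic $\R^3$-valued function on $\R^3\setminus Q$, and inspection of \eqref{b3*W} shows $\bb F(x)\to 0$ as $|x|\to\infty$ because $\bb K_v$ vanishes at infinity while $\rho$ has compact support. Hence $|\bb F|<\lambda/2$ outside a ball $\B(0,R)$, so $E$ is bounded; together with the closedness of $E$ and the positive separation $\mathrm{dist}(S,Q)>0$, this makes $\overline E$ a compact subset of $\mathcal U$. On $\mathcal U$, the scalar real-analytic function $\varphi:=|\bb F|^2-\lambda^2/4$ tends to $-\lambda^2/4\neq 0$ at infinity, so it cannot be identically zero on the connected set $\mathcal U$. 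By the local structure of zero sets of real-analytic functions recalled just before Lemma~\ref{Lem4}, $\varphi^{-1}(0)$ is locally a finite union of real-analytic embedded submanifolds of strictly positive codimension; compactness of $\overline E\subset\mathcal U$ upgrades this to a finite global decomposition into points, bounded analytic curves, and bounded analytic surfaces, which is the first assertion.

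For the refinements I would repeat the stratification argument on the lower-dimensional analytic manifolds. The restriction $\varphi|_{\mathcal B\setminus Q}$ is real-analytic on $\mathcal B\setminus Q$, which is connected by the non-disconnection hypothesis, and tends to $-\lambda^2/4\neq 0$ as one escapes to infinity in $\mathcal B$; so it is not identically zero, and its zero set in $\mathcal B$ is locally a finite union of embedded analytic submanifolds of codimension $\geq 1$ in $\mathcal B$, i.e.\ points and curves. Compactness of $E\cap\mathcal B$ then produces the claimed finite decomposition. The case of the analytic curve $\mathcal C$ with $\mathcal C\cap Q=\emptyset$ is the 1-dimensional analogue: $\varphi|_{\mathcal C}$ is analytic on all of $\mathcal C$ and not identically zero, so its zero set is discrete, and finite on the compact piece $E\cap\mathcal C$. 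The main technical point I anticipate is ensuring that $E$ stays uniformly away from $Q$ so that the local analytic stratification can be promoted to a genuinely \emph{finite} global decomposition; but this is delivered by the decay of $\bb F$ at infinity combined with $\mathrm{dist}(S,Q)>0$, and the rest is bookkeeping.
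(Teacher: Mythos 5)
Your proof is correct and takes essentially the same route as the paper's: both define the analytic function $|A^*(f-A\bs{\mu}_\lambda)|^2-\lambda^2/4$ on $\R^3\setminus Q$, observe that it vanishes at infinity and hence is not identically zero on the unbounded component, locate the supports of all minimizers inside its zero set using \eqref{CP} and Theorem~\ref{VPThm}(b), and then combine the local analytic stratification with compactness (driven by decay at infinity and the positive separation of $S$ from $Q$) to get a finite decomposition, repeating the argument on $\mathcal{B}\setminus Q$ and $\mathcal{C}$ for the refinements. The only cosmetic difference is that the paper observes the function is the same for all minimizers (since they share $A\bs{\mu}_\lambda$), whereas you fix one minimizer's contact set $E$ and use the structure result \eqref{equivsoldec} to place all other minimizers' supports inside it; both are valid and equivalent.
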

\begin{proof}
Recall from (b) in Proposition~\ref{Acomp} 
that $A^*(f-A\bs{\mu}_\lambda)$ is the restriction to $S$ of a 
$\R^3$-valued real analytic vector field 
on $\R^3\setminus Q$ that vanishes at infinity. Set
$g=|A^*(f-A\bs{\mu}_\lambda)|^2$ which vanishes at infinity and is a real analytic function 
$\R^3\setminus Q\to\R$. 
Theorem \ref{VPThm} implies
that the support of $|\bs{\mu}_\lambda|$, and also of any other minimizer of
\eqref{defcrit}, is included in the zero set of 
$h:=g-\lambda^2/4$. Note that $h$ is independent of the minimizer
$\bs{\mu}_\lambda$ under consideration, since any two have the same image under $A$ by Theorem \ref{VPThm}.
Now, since $h$  is not the zero function on the unbounded component
of $\R^3\setminus Q$ (because $g$ vanishes at infinity), its zero set is a
locally finite union of points and real analytic embedded curves and surfaces,
see discussion after Theorem~\ref{divM}.
Moreover, since $h$ tends to $-\lambda^2/4$  at infinity,
the zero set of $h$ intersected with $S$ is contained in a relatively compact open subset 
of $\R^3\setminus Q$, therefore the support of $\bs{\mu}_\lambda$
or any other minimizer is contained in finitely many of these
points, curves and surfaces.

The proof of the second assertion is similar, reasoning in $\mathcal{B}\setminus Q$ (resp. $\mathcal{C}$ rather than $\R^3\setminus Q$.
\end{proof}
 \begin{rmk}
Corollary \ref{corspar} applies for
instance in paleomagnetism, when
trying to recover  magnetizations on thin slabs of rock 
{\it via} the regularization scheme \eqref{crit}, a case in which 
 $\mathcal{B}$ is a plane. 
Note that if we omit the assumption that $\mathcal{B}$ (resp. $\mathcal{C}$)
is  unbounded
in Corollary \ref{corspar}, then we can only conclude that the
support of $\bs{\mu}_\lambda$ either is contained in finitely many
points and arcs (resp. points) or else $h=0$ on $\mathcal{B}$ (resp. $\mathcal{C}$). 
This remark applies,
{\it e.g.}  in MEG inverse problems
where $\mathcal{B}$ is typically  a closed surface. 
\end{rmk}
Even if $f\in \rnge A$, say $f=A(\bs{\mu}_0)$ for some
$\bs{\mu}_0\in(\mathcal{M}(S))^3$,
it is clear from \eqref{CP} that $\bs{\mu}_\lambda\neq\bs{\mu_0}$
when $\lambda>0$, unless $\bs{\mu}_0=0$. The purpose of the regularizing
term  $\lambda\|\bs{\mu}\|_{TV}$ in \eqref{defcrit} is rather
to get a $\bs{\mu}_\lambda$ which is
not too far from $\bs{\mu}_0$ when $f$ gets replaced by 
$f_e=f+e$ in \eqref{crit}. Here,
$e$ is some error ({\it e.g.} due to measurements)
 and $f_e$ represents the actual data.
To clarify the matter,
whenever $f,e\in L^2(Q,\rho)$ we set $f_e:=f+e$ and,
for $\lambda>0$, we let $\bs{\mu}_{\lambda,e}$ be a minimizer of
\eqref{defcrit} when $f$ gets replaced by $f_e$. Thus, with the notation of
\eqref{crit}, we have that $\bs{\mu}_\lambda=\bs{\mu}_{\lambda,0}$.
Typical results to warrant a regularization approach based on 
approximating $\bs{\mu}_0$ by $\bs{\mu}_{\lambda,e}$
are of  ``consistency'' type,
namely they assert that $\bs{\mu}_{\lambda,e}$ yields 
information on $\bs{\mu}_0$ 
as $\|e\|_{L^2(Q,\rho)}$ and $\lambda$ go to $0$ in a combined fashion, 
see for example \cite[Thms. 2\&5]{BurOsh} or \cite[Thm. 3.5\&4.4]{HoKaPoSch}.
We give below a theorem of this type, which goes beyond
\cite[Thm. 3.5]{HoKaPoSch} in that  we deal not just with weak-$*$ 
convergence of subsequences $\mu_{\lambda_n,e_n}$, but more generally with 
narrow convergence 
of both $\mu_{\lambda_n,e_n}$ and $|\mu_{\lambda_n,e_n}|$.
We will not consider
quantitative convergence properties involving
the Bregman distance, that require an additional source condition which needs 
not be  satisfied here in general.

As an extra piece of notation, we define
for $\bs{\mu}_0\in\mathcal{M}(S)^3$:
\begin{equation}
\label{defmineqA}
\mathfrak{M}(\bs{\mu}_0):=\min \{ \|\bs{\mu}\|_{TV}\colon A(\bs{\mu})=
A(\bs{\mu}_0)\}.
\end{equation}
The infimum in the right-hand side of (\ref{defmineqA})
is indeed attained, by the Banach-Alaoglu theorem and
since the kernel of $A$ is weak-$*$ closed. When $S$ and $Q$ satisfy the 
conditions of Lemma \ref{Lem4}, then this kernel  consists of $S$-silent 
magnetizations and $\mathfrak{M}(\bs{\mu}_0)$ is just $M(\bs{\mu}_0)$ 
defined in \eqref{defmineq}. But when these conditions are not satisfied 
(for instance if $S$ is smooth compact surface), 
then the two quantities may not coincide.

Recall that a sequence $\bs{\mu}_n\in\mathcal{M}(S)^3$ 
converges in the narrow sense to
$\bs{\mu}\in\mathcal{M}(S)^3$ if $\int\varphi\cdot d\bs{\mu_n}\to\int\varphi\cdot d\bs{\mu}$ as $n\to\infty$, whenever $\varphi:S\to\R^3$ is continuous and bounded.
When $S$ is compact this is equivalent to  weak-$*$ convergence, but
if $S$ is unbounded  it means that $\bs{\mu}_n$  does not
``loose mass at infinity''.

\begin{theorem}\label{lamto0Thm}
Assumptions and notation being as in Theorem \ref{VPThm}, 
given $f\in L^2(Q,\rho)$, the following hold.
\begin{enumerate}
\item \label{lamto0a} If  
$f=A( \bs{\mu}_0)$
with $\bs{\mu}_0\in (\mathcal{M}(S))^3$, while 
$e\in L^2(Q,\rho)$ and  $\lambda>0$, then  
\begin{equation}
\label{regbn}
\|\bs{\mu}_{\lambda,e}\|_{TV}\le  \frac{\|e\|_{L^2(Q,\rho)}^2}{\lambda}+
\mathfrak{M}(\bs{\mu}_0)
\end{equation} 
and
\begin{equation}
\label{limTVa}
\lim_{\lambda\to0^+\,,\,\|e\|_{L^2(Q,\rho)}/\sqrt{\lambda}\to0}\|\bs{\mu}_{\lambda,e}\|_{TV}
= \mathfrak{M}(\bs{\mu}_0).
\end{equation} As $\lambda\to 0$ and $\|e\|_{L^2(Q,\rho)}/\sqrt{\lambda}\to0$,
any weak-$*$ cluster point $\bs{\mu}^*$ 
 of $\bs{\mu}_{\lambda,e}$ 
(there must be at least one since  $\|\bs{\mu}_{\lambda,e}\|_{TV}$ is bounded)
meets $A(\bs{\mu}^*)=A(\bs{\mu}_0)=f$ and satisfies:
\begin{equation}\label{minwc}
\|\bs{\mu}^*\|_{TV}= \mathfrak{M}(\bs{\mu}_0).
\end{equation}
Moreover, if 
$\lambda_n\to0^+$ and $\|e_n\|_{L^2(Q,\rho)}/\sqrt{\lambda_n}\to0$, 
with $\lambda_n$, $e_n$ such  that $\bs{\mu}_{\lambda_n,e_n}$ 
converges weak-$*$ to $\bs{\mu}^*$, we have that
\begin{equation}
\label{limfinps}
\lim_{n\to\infty}\int\left|\frac{2A^*(f_{e_n}-A\bs{\mu}_{\lambda_{n},e_n})}{
\lambda_{n}}-\bb{u}_{\bs{\mu}^*}\right|\,d|\bs{\mu}^*|=0,
\end{equation}
also $\bs{\mu}_{\lambda_n,e_n}$ and  $|\bs{\mu}_{\lambda_n,e_n}|$
converge respectively to $\bs{\mu}^*$ and $|\bs{\mu}^*|$ in the narrow sense.
\item \label{lamto0b} If $f\not\in \rnge A$ and
either assumption \mbox{\rm (I)} or \mbox{\rm (II)} 
in Section \ref{MtoFop} holds,
then $\|\bs{\mu}_{\lambda,e}\|_{TV}\to \infty$ as $\lambda\to 0$ and $e\to0$. 
\item If $\lambda\ge 2 \sup_{x\in S} |(A^*f)(x)|$, then the unique minimizer of the right-hand side of \eqref{defmineq} is the zero magnetization. 
\end{enumerate}
\end{theorem}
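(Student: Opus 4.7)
My plan is to deduce part (3) directly from the optimality characterization provided by Theorem \ref{VPThm}, together with a short expansion of the cost functional. The first step is to verify that $\bs{\mu}_\lambda \equiv 0$ satisfies \eqref{CP}: the first line is vacuous because $|\bs{\mu}_\lambda|\equiv0$ leaves no point at which to impose the equality, while the second line reduces to $|A^*f|(x)\le\lambda/2$ for every $x\in S$, which is precisely the hypothesis $\lambda\ge 2\sup_{x\in S}|(A^*f)(x)|$. Consequently, Theorem \ref{VPThm} guarantees that the zero magnetization is a minimizer of $\mathcal{F}_{f,\lambda}$.

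For uniqueness, I would argue directly rather than through the structure formula \eqref{equivsoldec}. Take any $\bs{\mu}\in\mathcal{M}(S)^3$ and expand:
\[
\mathcal{F}_{f,\lambda}(\bs{\mu})-\mathcal{F}_{f,\lambda}(0)
= -2\langle A^*f,\bs{\mu}\rangle + \|A\bs{\mu}\|_{L^2(Q,\rho)}^2 + \lambda\|\bs{\mu}\|_{TV}.
\]
Bounding $|\langle A^*f,\bs{\mu}\rangle|\le \int_S|A^*f|\,d|\bs{\mu}|\le \sup_{S}|A^*f|\cdot\|\bs{\mu}\|_{TV}\le (\lambda/2)\|\bs{\mu}\|_{TV}$ by the hypothesis, one obtains
\[
\mathcal{F}_{f,\lambda}(\bs{\mu})-\mathcal{F}_{f,\lambda}(0)\ \ge\ \|A\bs{\mu}\|_{L^2(Q,\rho)}^2\ \ge\ 0.
\]

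The closing step, and the only one requiring any attention, is to push this to strict positivity whenever $\bs{\mu}\neq 0$. If $\bs{\mu}$ is another minimizer, equality above forces $A\bs{\mu}=0$; but then $\langle A^*f,\bs{\mu}\rangle = \langle f,A\bs{\mu}\rangle_{L^2(Q,\rho)}=0$, so the expansion collapses to $\lambda\|\bs{\mu}\|_{TV}=0$, yielding $\bs{\mu}=0$ since $\lambda>0$. The one delicate point to highlight is that the non-strict hypothesis $\lambda\ge 2\sup_S|A^*f|$ suffices because, once $A\bs{\mu}=0$ is extracted from the $\|A\bs{\mu}\|^2$ term, the linear term in $\bs{\mu}$ automatically vanishes, so the residual obstruction $\lambda\|\bs{\mu}\|_{TV}$ cannot be absorbed by the borderline case of the Cauchy--Schwarz-type estimate on $\langle A^*f,\bs{\mu}\rangle$.
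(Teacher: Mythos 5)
Your proposal addresses only part~(3) (part~(c)) of the theorem. Parts~(1) and~(2) -- the consistency estimates \eqref{regbn}, \eqref{limTVa}, \eqref{minwc}, \eqref{limfinps}, the narrow convergence of both $\bs{\mu}_{\lambda_n,e_n}$ and $|\bs{\mu}_{\lambda_n,e_n}|$, and the blow-up of $\|\bs{\mu}_{\lambda,e}\|_{TV}$ when $f\notin\rnge A$ -- are not touched at all, and these are in fact the main substance of the result: in the paper they occupy almost the entire proof and require the optimality conditions~\eqref{CP}, the Banach--Alaoglu theorem, compactness of $A$, and a careful limiting argument with cutoff functions $\psi_R$ to rule out loss of mass at infinity. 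As it stands your proposal is therefore far from a proof of the stated theorem.

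For the piece you did prove, your argument is correct and in fact a bit cleaner than the paper's. The paper invokes Theorem~\ref{VPThm} twice: once to see that $\bs{\mu}_\lambda=0$ satisfies \eqref{CP}, and again (via condition~(a) of the uniqueness characterization there) to conclude that any other minimizer $\bs{\mu}$ lies in $\ker A$, so that $\mathcal{F}_{f,\lambda}(\bs{\mu})=\|f\|^2+\lambda\|\bs{\mu}\|_{TV}$ forces $\bs{\mu}=0$. You instead expand $\mathcal{F}_{f,\lambda}(\bs{\mu})-\mathcal{F}_{f,\lambda}(0)=-2\langle A^*f,\bs{\mu}\rangle+\|A\bs{\mu}\|^2+\lambda\|\bs{\mu}\|_{TV}$ and bound the linear term by $\lambda\|\bs{\mu}\|_{TV}$, getting $\mathcal{F}_{f,\lambda}(\bs{\mu})-\mathcal{F}_{f,\lambda}(0)\ge\|A\bs{\mu}\|^2\ge 0$ in one stroke. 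This already proves that $0$ is a minimizer without ever checking~\eqref{CP}, so your ``first step'' is superfluous. The uniqueness step is the same as the paper's once you extract $A\bs{\mu}=0$ from equality in $\|A\bs{\mu}\|^2\ge0$; your observation that $\langle A^*f,\bs{\mu}\rangle$ then vanishes exactly, so the borderline case $\lambda=2\sup_S|A^*f|$ causes no trouble, is a helpful clarification that the paper leaves implicit. (Incidentally, the paper's phrase ``any other minimizer must be silent'' is slightly imprecise -- what Theorem~\ref{VPThm} actually gives is $A\bs{\mu}=0$, which is weaker unless the hypotheses of Lemma~\ref{Lem4} hold -- and your reformulation via $\ker A$ is the accurate one. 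Note also that the cross-reference in the statement of~(c) should point to the minimization of $\mathcal{F}_{f,\lambda}$ in~\eqref{crit}, not to~\eqref{defmineq}; you interpreted it correctly.)

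In short: part~(3) is handled correctly and a touch more elegantly than in the paper, but parts~(1) and~(2) are missing entirely, so the proposal does not prove the theorem.
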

\begin{proof}

If $f\in\rnge A$, or if  $\rnge A$ is dense in $L^2(Q,\rho)$,
it is clear that $\mathcal{F}_{f_e,\lambda}( \bs{\mu}_{\lambda,e}) \to 0$ as 
$\lambda\to 0$ and $e\to0$,
hence $\| f -A(\bs{\mu}_{\lambda,e})\|_{L^2(Q,\rho)}\to 0$ in this case.
In particular, if $f\not\in \rnge A$ but either assumption (I) or (II) in 
Section \ref{MtoFop} holds, then 
$\rnge A$ is dense by Proposition \ref{Acomp} (c)
and so $\|\bs{\mu}_{\lambda,e}\|_{TV}\to \infty$ 
otherwise a
subsequence would converge weak-$*$ to some  $\bs{\mu}_0\in(\mathcal{M}(S))^3$ 
implying in the limit that $f=A(\bs{\mu}_0)$, a contradiction
which proves \ref{lamto0b}.

Next, let $\widetilde{\bs{\mu}}_0$ be a minimizer of the right hand side of
\eqref{defmineq}, so that $A(\widetilde{\bs{\mu}}_0)=f$ and
$\|\widetilde{\bs{\mu}}_0\|_{TV}=\mathfrak{M}(\bs\mu_0)$.
By the optimality of $\bs{\mu}_{\lambda,e}$,
we have  that
\begin{equation}
\label{ptdep}
\begin{array}{l}
\|f_e-A(\bs{\mu}_{\lambda,e})\|^2_{L^2(Q)}+\lambda\| \bs{\mu}_{\lambda,e}\|_{TV} = 
\mathcal{F}_{f_e,\lambda}( \bs{\mu}_{\lambda,e})\le \mathcal{F}_{f_e,\lambda}( \widetilde{\bs{\mu}}_0)\\
= \|e\|_{L^2(Q,\rho)}^2+ \lambda\| \widetilde{\bs{\mu}}_0\|_{TV}=
\|e\|_{L^2(Q,\rho)}^2+\lambda \mathfrak{M}(\bs{\mu}_0),
\end{array}
\end{equation}
implying that \eqref{regbn} holds.
Thus, if $\bs{\mu}^*$ is a weak-$*$ cluster point of $\{\bs{\mu}_{\lambda,e}\}$
as $\lambda\to0^+$ with $\|e\|_{L^2(Q,\rho)}=o(\sqrt{\lambda})$,
and if $\lambda_n$, $e_n$  are sequences with these limiting properties
such  that $\bs{\mu}_{\lambda_n,e_n}$ 
converges weak-$*$ to $\bs{\mu}^*$,
we deduce  from the Banach-Alaoglu Theorem that
\begin{equation}
\label{premlim}
\|\bs{\mu}^*\|_{TV}\le \liminf_n\|\bs{\mu}_{\lambda,e}\|_{TV}\le
\limsup_n\|\bs{\mu}_{\lambda,e}\|_{TV}\le \mathfrak{M}(\bs{\mu}_0).
\end{equation}
Also, since $A$ is weak-$*$ to weak continuous (it is even compact), 
we get that
\begin{equation}
\label{seclim}
\|f-A(\bs{\mu}^*)\|_{L^2(Q,\rho)}\le
\lim_n\|f-A(\bs{\mu}_{\lambda,e})\|_{L^2(Q,\rho)}=0,
\end{equation}
where the last equality was obtained in the proof of (b).
From \eqref{seclim} it follows that  $ A(\bs{\mu}^*)=f$,
and from \eqref{premlim} we now see that \eqref{minwc}
holds, by definition of $\mathfrak{M}(\bs{\mu}_0)$.
Moreover, since a weak-$*$ convergent subsequence can be extracted from any 
subsequence of $\bs{\mu}_{\lambda,e}$, we deduce from what precedes
that \eqref{limTVa} takes place. 
Next, if $\lambda_n$, $e_n$ are as before, we get 
in
view of \eqref{limTVa} and (\ref{ptdep}) that 
$\|f_{e_n}-A(\bs{\mu}_{\lambda_n,e_n})\|^2_{L^2(Q,\rho)}=
o(\lambda_n)$,  
which is equivalent to
\[
0=\lim_n\frac{2}{\lambda_n}\langle f_{e_n}-A(\bs{\mu}_{\lambda_n,e_n}),
f-A(\bs{\mu}_{\lambda_n,e_n})\rangle+
\frac{2}{\lambda_n}\langle  f_{e_n}-A(\bs{\mu}_{\lambda_n,e_n}),
e_n\rangle,
\]
and since $\|e_n\|_{L^2(Q,\rho)}=o(\sqrt{\lambda_n})$ while $f=A(\bs{\mu}^*)$,
we obtain:
\begin{equation}
\label{limsc}
\begin{array}{l}
0=
\lim_n\left\langle \frac{2A^*(f_{e_n}-A(\bs{\mu}_{\lambda_n,e_n}))}{\lambda_n}\,,\,
\bs{\mu}_{\lambda_n,e_n}-\bs{\mu}^*\right\rangle=
\lim_n\left\langle \frac{2A^*(f_{e_n}-A(\bs{\mu}_{\lambda_n,e_n}))}{\lambda_n}\,,\,
\bs{\mu}_{\lambda_n,e_n}\right\rangle\\
-\lim_n \left\langle
\frac{2A^*(f_{e_n}-A(\bs{\mu}_{\lambda_n,e_n}))}{\lambda_n}\,,\,
\bs{\mu}^*\right\rangle
=\lim_n\left( \|\bs{\mu}_{\lambda_n,e_n}\|_{TV}-
\int \frac{2A^*(f_{e_n}-A(\bs{\mu}_{\lambda_n,e_n}))}{\lambda_n} \cdot d\bs{\mu}^*\right)\\
=
\|\bs{\mu}^*\|_{TV}-\ \lim_n\int \frac{2A^*(f_{e_n}-A(\bs{\mu}_{\lambda_n,e_n}))}{\lambda_n} \cdot\bb{u}_{\bs{\mu}^*}d|\bs{\mu}^*|,
\end{array}
\end{equation}
where we used the first relation in (\ref{CP}) to get the third equality
and \eqref{limTVa}, \eqref{minwc} to get the last one.
By the  second relation in (\ref{CP}), we know  that
$|2A^*(f_{e_n}-A\bs{\mu}_{\lambda_n})/\lambda_n| \leq1$ everywhere on $S$,
hence \eqref{limsc} implies that for any $\varepsilon>0$
\[
\limsup_n|\bs{\mu}^*|\left\{x\in S:\ \frac{2A^*(f_{e_n}-A\bs{\mu}_{\lambda_n,e_n})}
{\lambda_n}\cdot
\bb{u}_{\bs{\mu}^*}<1-\varepsilon\right\}=0.
\]
Therefore, using  the Borel-Cantelli Lemma and a diagonal argument, we
may extract a subsequence $\lambda_{k_n}$ for which
$2A^*(f_{e_{k_n}}-A\bs{\mu}_{\lambda_{k_n},e_{k_n}})/
\lambda_{k_n}$ converges pointwise $|\bs{\mu}^*|$-a.e. to 
$\bb{u}_{\bs{\mu}^*}$. So, by dominated convergence, it follows that
\[
\lim_{n\to\infty}\int\left|\frac{2A^*(f_{e_{k_n}}-A(\bs{\mu}_{\lambda_{k_n},e_{k_n}}))}{
\lambda_{k_n}}-\bb{u}_{\bs{\mu}^*}\right|\,d|\bs{\mu}^*|=0,
\]
and since the reasoning can be applied to any subsequence of $\lambda_n$,
we obtain \eqref{limfinps}. 

We now prove that $|\bs{\mu}_{\lambda_n,e_n}|$ converges weak-$*$ to 
$|\bs{\mu}^*|$. For this, it is  enough to show that if
$|\bs{\mu}_{\lambda_n,e_n}|$ converges weak-$*$ to
$\bs{\nu}\geq0\in\mathcal{M}(S)$, then $\bs{\nu}=|\bs{\mu}^*|$.
For this,
let $\psi:S\to[0,1]$ be a continuous function with 
compact support.
Pick $\varepsilon>0$, and then $n_\varepsilon$ such that the 
integral in the left-hand 
side  of \eqref{limfinps} is less than $\varepsilon$ for $n\geq n_\varepsilon$.
 As $|2A^*(f_{e_{n_\varepsilon}}-A(\bs{\mu}_{\lambda_{n_\varepsilon},e_{n_\varepsilon}}))/\lambda_{n_\varepsilon}| \leq1$ everywhere on $S$ by  (\ref{CP}), we obtain from the definition of $n_\varepsilon$ and \eqref{minwc} that
\[
\begin{array}{l}
\int\psi\,d\bs{\nu}=\lim_n\int\psi\,d|\bs{\mu}_{\lambda_n,e_n}|
\geq
\lim_n\left|\int\psi\frac{2A^*(f_{e_{n_\varepsilon}}-A(\bs{\mu}_{\lambda_{n_\varepsilon},e_{n_\varepsilon}}))}{\lambda_{n_\varepsilon}}\cdot d\bs{\mu}_{\lambda_n,e_n}\right|\\
=
\left|\int\psi\frac{2A^*(f_{e_{n_\varepsilon}}-A(\bs{\mu}_{\lambda_{n_\varepsilon},e_{n_\varepsilon}}))}{\lambda_{n_\varepsilon}}\cdot 
d\bs{\mu}^*\right|\geq 
\int\psi d|\bs{\mu}^*|-
\int\left|\frac{2A^*(f_{e_{n_\varepsilon}}-A(\bs{\mu}_{\lambda_{n_\varepsilon},e_{n_\varepsilon}}))}{\lambda_{n_\varepsilon}}-\bf{u}_{\bs{\mu}^*}\right|\cdot d|\bs{\mu}^*|\\
\geq \int\psi d|\bs{\mu}^*|-\varepsilon,
\end{array}
\]
where we used in the equality above that $A^*(f_{e_{n_\varepsilon}}-A(\bs{\mu}_{\lambda_{n_\varepsilon},e_{n_\varepsilon}}))$ is continuous on $S$, by 
Proposition \ref{Acomp} (b).
Since $\varepsilon>0$ was arbitrary, we conclude that $\bs{\nu}-|\bs{\mu}^*|\geq0$. However, since $\||\bs{\mu}^*|\|_{TV}=\mathfrak{M}(\bs{\mu}_0)$ by
\eqref{minwc}, whereas $\|\bs{\nu}\|_{TV}\leq\mathfrak{M}(\bs{\mu}_0)$ by the Banach Alaoglu theorem, we conclude that  $\bs{\nu}-|\bs{\mu}^*|$ is the zero measure, as desired.

To establish  that $\bs{\mu}_{\lambda_n,e_n}$ converges to $\bs{\mu}^*$ 
in the narrow sense, pick $\varepsilon>0$ and $n_\varepsilon$ as before.
Fix $R_\varepsilon$ so large that $|\bs{\mu}^*|(S\cap \overline{B}(0,R_\varepsilon)>\|\bs{\mu}^*\|_{TV}-\varepsilon$ and
for each $R$ let $\psi_R:S\to[0,1]$ be continuous, identically 1 on
$S\cap B(0,R)$ and 0 on $S\setminus B(0,2R)$. 
Reasoning as before, we get that
\[
\begin{array}{l}
\|\bs{\mu}^*\|_{TV}\geq \limsup_n\,|\bs{\mu}_{\lambda_n,e_n}|(S\cap \overline{B}(0,2R_\varepsilon))\geq\limsup_n\,
\int\psi_{R_\varepsilon}\,d|\bs{\mu}_{\lambda_n,e_n}|\\
\geq
\lim_n\left|\int\psi_{R_\varepsilon}\frac{2A^*(f_{e_{n_\varepsilon}}-A(\bs{\mu}_{\lambda_{n_\varepsilon},e_{n_\varepsilon}}))}{\lambda_{n_\varepsilon}}\cdot d\bs{\mu}_{\lambda_n,e_n}\right|=
\left|\int\psi_{R_\varepsilon}\frac{2A^*(f_{e_{n_\varepsilon}}-A(\bs{\mu}_{\lambda_{n_\varepsilon},e_{n_\varepsilon}}))}{\lambda_{n_\varepsilon}}.
d\bs{\mu}^*\right|
\\
\geq
\left|\int\psi_{R_\varepsilon} \bf{u}_{\bs{\mu}^*}\cdot d\bs{\mu}^*\right|-
\left|\int\psi_{R_\varepsilon} \left(\frac{2A^*(f_{e_{n_\varepsilon}}-A(\bs{\mu}_{\lambda_{n_\varepsilon},e_{n_\varepsilon}}))}{\lambda_{n_\varepsilon}}-\bf{u}_{\bs{\mu}^*}\right)\cdot d\bs{\mu}^*\right|\\
\geq \int\psi_{R_\varepsilon}d|\bs{\mu}^*|-
\int\left|\frac{2A^*(f_{e_{n_\varepsilon}}-A(\bs{\mu}_{\lambda_{n_\varepsilon},e_{n_\varepsilon}}))}{\lambda_{n_\varepsilon}}-\bf{u}_{\bs{\mu}^*}\right|\cdot d|\bs{\mu}^*|
\geq
|\bs{\mu}^*|(S\cap \overline{B}(0,R))-\varepsilon\\
\geq \|\bs{\mu}^*\|_{TV}-2\varepsilon.
\end{array}
\]
Hence, in view of \eqref{limTVa} and \eqref{minwc},
we see from what precedes
that for $n$ large enough
$|\bs{\mu}_{\lambda_n,e_n}|(S\setminus \overline{B}(0,2R_\varepsilon))\leq3\varepsilon$,
say.
Therefore, if we fix
a bounded and continuous $\varphi: S\to\R^3$ with $|\varphi|\leq M$, 
we have since $\bs{\mu}_{\lambda_n,e_n}$ converges weak-$*$ to $\bs{\mu}^*$
that
\[
\begin{array}{l}\limsup_n\left|\int\varphi\cdot d(\bs{\mu}_{\lambda_n,e_n}-\bs{\mu}^*)\right|\leq
\limsup_n\left|\int\psi_{2R_\varepsilon}\varphi\cdot d(\bs{\mu}_{\lambda_n,e_n}-\bs{\mu}^*)\right|\\
+\limsup_n \left|
\int(1-\psi_{2R_\varepsilon})\varphi\cdot d(\bs{\mu}_{\lambda_n,e_n}-\bs{\mu}^*)\right|\leq0+
6M\varepsilon.
\end{array}\]
Because $\varepsilon>0$ was arbitrary, we deduce that 
$\bs{\mu}_{\lambda_n,e_n}$ converges to $\bs{\mu}^*$ in the narrow sense,
and the fact that 
$|\bs{\mu}_{\lambda_n,e_n}|$ converges to $|\bs{\mu}^*|$ in the narrow sense
as well can be shown in a similar way.
This proves (a).


Finally, suppose $\lambda\ge 2 \sup_{x\in S} |(A^*f)(x)|$.  Theorem~\ref{VPThm} shows that the zero magnetization is a minimizer of $\mathcal{F}_{f,\lambda}$ and that any other minimizer $\bs{\mu}$ must be silent, but then 
$\mathcal{F}_{f,\lambda}(\bs{\mu} )=\|f\|+\lambda \|\bs{\mu}\|_{TV}$ showing that in fact the zero magnetization is the unique minimizer of $\mathcal{F}_{f,\lambda}$.  
\end{proof}

Assertion (a) of Theorem \ref{lamto0Thm} entails that any sequence
$\mu_{\lambda_n,e_n}$ with
$\lambda_n=o(1)$ and $\|e_n\|_{L^2(Q)}=o(\sqrt{\lambda_n})$
has a subsequence converging in the narrow sense to some 
$\bs{\mu}^*$ such that $A(\bs{\mu}^*)=A(\bs{\mu}_0)=f$ and 
$\|\bs{\mu}^*\|_{TV}=\mathfrak{M}(\bs{\mu}_0)$. If such a $\bs{\mu}^*$ is 
unique,  we get narrow convergence  of
$\bs{\mu}_{\lambda,e}$ to $\bs{\mu}^*$
as soon as $\lambda\to0$ and $\|e\|_{L^2(Q)}/\sqrt{\lambda}\to0$.
Using Theorems \ref{unrec3D}  and  \ref{UnidirThm}, we thus obtain
a recovery result for ``sparse magnetizations'' as follows.
\begin{theorem}\label{weak*to0}
	Let $S,Q\subset\R^3$ satisfy the assumptions of Lemma \ref{Lem4}
with $Q$ compact, and assume in addition that $S$ is a
 slender set  with $S=\bigcup_{i=1}^n S_i$ for some finite 
collection of disjoint closed sets $S_1, S_2, \ldots, S_n$.   
	Suppose $\bs{\mu}_0\in\mathcal{M}(S)^3$ and set $f=A\bs \mu_0$.
	If either
	\begin{enumerate}
		\item $\bs{\mu}_0\mathcal{b}S_i$ is uni-directional for $i=1, 2, \ldots, n$,
		\item or $\text{\emph{supp}}\ \bs\mu_0$ is purely 1-unrectifiable
		\end{enumerate}
		then $\bs{\mu}_{\lambda,e}$ converges narrowly
to $\bs{\mu}_0$ and $|\bs{\mu}_{\lambda,e}|$ converges narrowly
to $|\bs{\mu}_0|$as $\lambda\to0$ and $\|e\|_{L^2(Q)}/\sqrt{\lambda}\to0$.
		\end{theorem}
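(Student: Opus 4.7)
The plan is to combine Theorem \ref{lamto0Thm}(a) with the uniqueness results established earlier (Theorems \ref{unrec3D} and \ref{UnidirThm}) via a standard subsequence argument. Specifically, I would first note that under the hypotheses of Lemma \ref{Lem4}, Proposition \ref{Acomp}(d) guarantees that every element of $\ker A$ is $S$-silent; since the converse direction of that inclusion is immediate from the definition of $A$, the kernel of $A$ coincides with the space of $S$-silent magnetizations. In particular, $A(\bs{\nu})=A(\bs{\mu}_0)$ if and only if $\bs{\nu} \stackrel{S}{\equiv} \bs{\mu}_0$, so the quantities $\mathfrak{M}(\bs{\mu}_0)$ of \eqref{defmineqA} and $M(\bs{\mu}_0)$ of \eqref{defmineq} agree.

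Next, pick any sequences $\lambda_n\to 0^+$ and $e_n\in L^2(Q,\rho)$ with $\|e_n\|_{L^2(Q,\rho)}/\sqrt{\lambda_n}\to 0$. By \eqref{regbn}, the sequence $\{\bs{\mu}_{\lambda_n,e_n}\}$ is bounded in total variation, so by Banach--Alaoglu it has a weak-$*$ convergent subsequence; by Theorem \ref{lamto0Thm}(a), its weak-$*$ limit $\bs{\mu}^*$ satisfies $A(\bs{\mu}^*)=f=A(\bs{\mu}_0)$ together with $\|\bs{\mu}^*\|_{TV}=\mathfrak{M}(\bs{\mu}_0)=M(\bs{\mu}_0)$. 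Thus $\bs{\mu}^*$ is $S$-equivalent to $\bs{\mu}_0$ and has total variation equal to $M(\bs{\mu}_0)$; that is, $\bs{\mu}^*$ solves EP-1 for the datum $\bs{\mu}_0$.

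Now the uniqueness results do the work: in case (1), Theorem \ref{UnidirThm} (whose hypotheses are met since $S$ is slender and each $\bs{\mu}_0\mathcal{b}S_i$ is uni-directional) forces $\bs{\mu}^*=\bs{\mu}_0$; in case (2), Theorem \ref{unrec3D} (again applicable since $S$ is slender and $\textrm{supp}\,\bs{\mu}_0$ is purely 1-unrectifiable) yields the same conclusion. Moreover, by Theorem \ref{lamto0Thm}(a) the convergence to $\bs{\mu}^*=\bs{\mu}_0$ actually takes place in the narrow sense, and $|\bs{\mu}_{\lambda_n,e_n}|$ converges narrowly to $|\bs{\mu}^*|=|\bs{\mu}_0|$.

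Finally, to pass from subsequential to full convergence, I would run the standard ``every subsequence has a subsubsequence converging to the same limit'' argument: any subsequence of $\bs{\mu}_{\lambda,e}$ (indexed by parameters satisfying the limiting conditions) admits, by the reasoning above, a further subsequence narrowly convergent to the unique limit $\bs{\mu}_0$, and likewise $|\bs{\mu}_{\lambda,e}|$ subsubsequentially narrowly converges to $|\bs{\mu}_0|$. Hence the full nets converge narrowly to $\bs{\mu}_0$ and $|\bs{\mu}_0|$ respectively. The proof involves no real obstacle beyond carefully aligning the hypotheses of the earlier theorems; the only non-trivial point is the identification $\ker A = \{S\text{-silent magnetizations}\}$, which is why the assumptions of Lemma \ref{Lem4} are imposed in the statement.
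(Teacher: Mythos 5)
Your argument is exactly the one the paper uses: the paragraph preceding the theorem statement explicitly notes that $\ker A$ coincides with the $S$-silent magnetizations under the hypotheses of Lemma~\ref{Lem4} (so $\mathfrak{M}(\bs{\mu}_0)=M(\bs{\mu}_0)$), that Theorem~\ref{lamto0Thm}(a) gives subsequential narrow convergence to a solution of EP-1, and that uniqueness from Theorems~\ref{unrec3D} and~\ref{UnidirThm} then upgrades this to full narrow convergence via the standard subsequence principle. Your proposal is correct and adds no genuinely new idea; it just fills in the same steps the paper leaves implicit.
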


\begin{rmk}
In the setting of Theorem \ref{lamto0Thm} (a),
it is generally not true that 
$\|\bs{\mu}^*-\bs{\mu}_{\lambda_n,e_n}\|_{TV}\to0$.
For instance, 
let $S\subset\R^2\times\{0\}$ be compact, assume that $Q\subset\R^2\times\{h\}$ for some $h>0$, let $\rho$ be 2-dimensional Hausdorff measure and 
$\bs\mu_0=\chi_Sv$, where $v\in\R^3$.
Then $\bs{\mu}_0$ is unidirectional,  and we 
know from Theorem \ref{weak*to0} 
that $\bs{\mu}_{\lambda,e}$ converges narrowly
to $\bs{\mu}_0$ as $\lambda\to0$ and $\|e\|_{L^2(Q)}/\sqrt{\lambda}\to0$.
Still, the
support of $\bs{\mu}_{\lambda,e}$ has 
Hausdorff dimension at most 1, by Corollary \ref{corspar}, therefore
$\bs\mu_{\lambda,e}$ and $\bs{\mu}_0$ are mutually singular.
Hence $\|\bs{\mu}_{\lambda,e}-\bs{\mu}_0\|_{TV}=\|\bs{\mu}_{\lambda,e}\|_{TV}
+\|\bs{\mu}_0\|_{TV}$ cannot go to zero when $\lambda$ goes to zero.
\end{rmk}
As a particular case of Theorem \ref{weak*to0}, taking into account 
Corollary \ref{corspar}, we obtain:
\begin{corollary}
\label{points}
Let $S,Q\subset\R^3$ satisfy the assumptions of Lemma \ref{Lem4}
with $Q$ compact and $S$  slender.
Supppose that $\bs{\mu}_0\in\mathcal{M}(S)^3$ is a finite sum of point dipoles:
 $\bs{\mu}_0=\sigma_{j=1}^Nv_j\delta_{x^{(j)}}$ for some
$v_j\in\R^3\setminus\{0\}$ and $x^{(j)}\in S$, and set $f=A\bs \mu_0$. 
Then, to each $\varepsilon,r>0$ there is $\eta>0$ such that,
whenever $0<\lambda<\eta$ and $\|e\|_{L^2(Q)}/\sqrt{\lambda}<\eta$:
\begin{enumerate}
\item $\bigl|\,|\bs{\mu}_{\lambda,e}|(B(x^{(j)},r))-|v_j|\,\bigr|<\varepsilon$ for $1\leq j\leq N$,
\item $|\bs{\mu}_{\lambda,e}|\left(S\setminus\cup_{j=1}^NB(x^{(j)},r)\right)<\varepsilon$,
\item $\bigl|\,\bs{\mu}_{\lambda,e}(B(x^{(j)},r))-v_j\,\bigr|<\varepsilon$ for $1\leq j\leq N$,
\item $\text{\emph{supp}}\ \bs\mu_{\lambda,e}$ is contained in a finite collection of analytic surfaces, curves and points.
\end{enumerate}
\end{corollary}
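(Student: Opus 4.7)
The plan is to derive this corollary directly from Theorem~\ref{weak*to0}, Theorem~\ref{lamto0Thm}, and Corollary~\ref{corspar}, using standard squeezing arguments to convert narrow convergence of measures into information about specific open balls. The starting observation is that the support of $\bs{\mu}_0=\sum_{j=1}^N v_j\delta_{x^{(j)}}$ is the finite set $\{x^{(1)},\ldots,x^{(N)}\}$, which has $\mathcal{H}_1$-measure zero and is therefore purely 1-unrectifiable. Hypothesis (b) of Theorem~\ref{weak*to0} is thus satisfied, so that $\bs{\mu}_{\lambda,e}$ converges narrowly to $\bs{\mu}_0$ and $|\bs{\mu}_{\lambda,e}|$ converges narrowly to $|\bs{\mu}_0|=\sum_j|v_j|\delta_{x^{(j)}}$ whenever $\lambda\to0^+$ and $\|e\|_{L^2(Q,\rho)}/\sqrt{\lambda}\to0$.

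For (a), I would fix $r>0$ smaller than half the minimum inter-point distance so that the balls $B(x^{(j)},r)$ are pairwise disjoint. Given a small $\delta>0$ with $\delta<r$, choose two continuous functions $\varphi_j^{-},\varphi_j^{+}:S\to[0,1]$ such that $\varphi_j^{-}$ equals $1$ on $\overline{B(x^{(j)},r-\delta)}$ and vanishes outside $B(x^{(j)},r)$, while $\varphi_j^{+}$ equals $1$ on $\overline{B(x^{(j)},r)}$ and vanishes outside $B(x^{(j)},r+\delta)$ (with $\delta$ so small that this larger ball still excludes the other $x^{(i)}$). Since $\varphi_j^{\pm}(x^{(j)})=1$ and $\varphi_j^{\pm}(x^{(i)})=0$ for $i\neq j$, both integrals $\int\varphi_j^{\pm}\,d|\bs{\mu}_0|$ equal $|v_j|$, and the sandwich
\begin{equation*}
\int\varphi_j^{-}\,d|\bs{\mu}_{\lambda,e}|\;\leq\;|\bs{\mu}_{\lambda,e}|(B(x^{(j)},r))\;\leq\;\int\varphi_j^{+}\,d|\bs{\mu}_{\lambda,e}|
\end{equation*}
together with narrow convergence forces $|\bs{\mu}_{\lambda,e}|(B(x^{(j)},r))\to|v_j|$, which yields (a) upon choosing $\eta$ small enough.

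For (b), applying narrow convergence of the positive measures $|\bs{\mu}_{\lambda,e}|$ to the constant function~$1$ (or equivalently invoking \eqref{limTVa} in Theorem~\ref{lamto0Thm}) gives $\|\bs{\mu}_{\lambda,e}\|_{TV}\to\sum_j|v_j|$; subtracting the $N$ statements of (a) then gives (b). For (c), the same sandwich technique works componentwise on the $\R^3$-valued narrow convergence: for each canonical basis vector $e_k$, $\int\varphi_j^{\pm}e_k\cdot d\bs{\mu}_{\lambda,e}\to (v_j)_k$, while the discrepancy between these integrals and the $k$-th component of $\bs{\mu}_{\lambda,e}(B(x^{(j)},r))$ is bounded in absolute value by $|\bs{\mu}_{\lambda,e}|(B(x^{(j)},r+\delta)\setminus B(x^{(j)},r-\delta))$, which by (a) applied at both radii tends to $|v_j|-|v_j|=0$ as $\lambda\to0^+$. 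Combining these bounds componentwise yields (c). Finally, (d) is immediate: for each fixed $\lambda>0$ and $e$, Corollary~\ref{corspar} asserts that the support of the minimizer $\bs{\mu}_{\lambda,e}$ of $\mathcal{F}_{f_e,\lambda}$ lies in a finite union of real-analytic points, embedded curves, and surfaces.

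The main obstacle I anticipate is the bookkeeping in step (c): narrow convergence of the $\R^3$-valued measures does not preserve signs of components under restriction to a ball, so one must carefully combine the bump-function approximation of the characteristic function of $B(x^{(j)},r)$ with the annular control coming from part (a); all other steps are essentially routine applications of the cited results.
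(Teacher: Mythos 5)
Your proof is correct and fills in precisely the routine details the paper leaves implicit, since the paper simply asserts Corollary \ref{points} as ``a particular case of Theorem \ref{weak*to0}, taking into account Corollary \ref{corspar}'' without writing out the argument. The three ingredients you identify --- (i) a finite point set has $\mathcal{H}_1$-measure zero, hence is purely 1-unrectifiable, so alternative (b) of Theorem \ref{weak*to0} gives narrow convergence of both $\bs{\mu}_{\lambda,e}$ and $|\bs{\mu}_{\lambda,e}|$; (ii) Portmanteau-style sandwiching with bump functions to pass from narrow convergence to control on the balls $B(x^{(j)},r)$ and the complementary region; and (iii) Corollary \ref{corspar} for the structure of the support --- are exactly the intended route, and your bookkeeping in step (c), bounding the discrepancy by the mass of the annulus $B(x^{(j)},r+\delta)\setminus B(x^{(j)},r-\delta)$ and then applying the part (a) estimate at the two nearby radii, is sound. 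The only implicit hypothesis you are using, correctly, is that $r$ is small enough that the balls $B(x^{(j)},r)$ are pairwise disjoint; the corollary's literal wording ``to each $\varepsilon,r>0$'' cannot hold for $r$ so large that a single ball would absorb several dipoles, so this restriction is necessary and consistent with the paper's intent (analogous spike-recovery results always assume sufficiently separated spikes).
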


\begin{rmk}
If $S$ is contained in some unbounded 
real analytic surface or curve ({\it e.g.} a plane or a line) which is
disjoint from 
$Q$, point (d) of Corollary \ref{points} involves only curves and points
in case of a surface and only points in case of a curve. This follows from 
Corollary \ref{corspar}.
\end{rmk}


\begin{figure}[h!]
	\includegraphics[scale=.5]{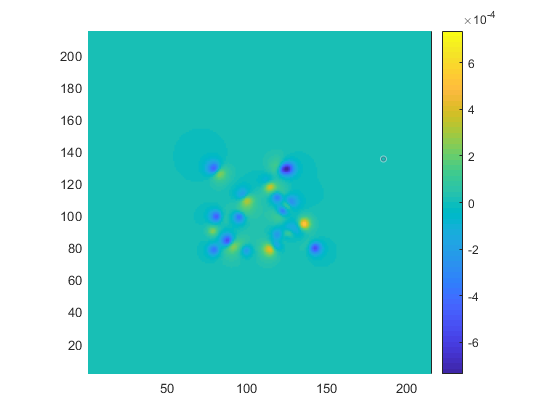} \includegraphics[scale=.5]{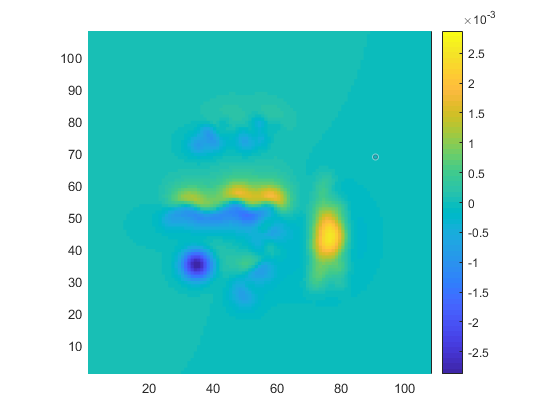}
	\label{fig:Fields}

	\caption{The fields $b_3(\bs{\mu})$ for (left) a sparse magnetization consisting of 20 dipoles (see Figure~\ref{fig:Sparse} for $\bs\mu$ and reconstructions $\bs{\mu}_\lambda$) and (right) a piecewise unidirectional magnetization with $S$ consisting of four connected components (see Figure~\ref{fig:Uni}).}
\end{figure}

\begin{figure}[h!]
			\includegraphics[scale=.5]{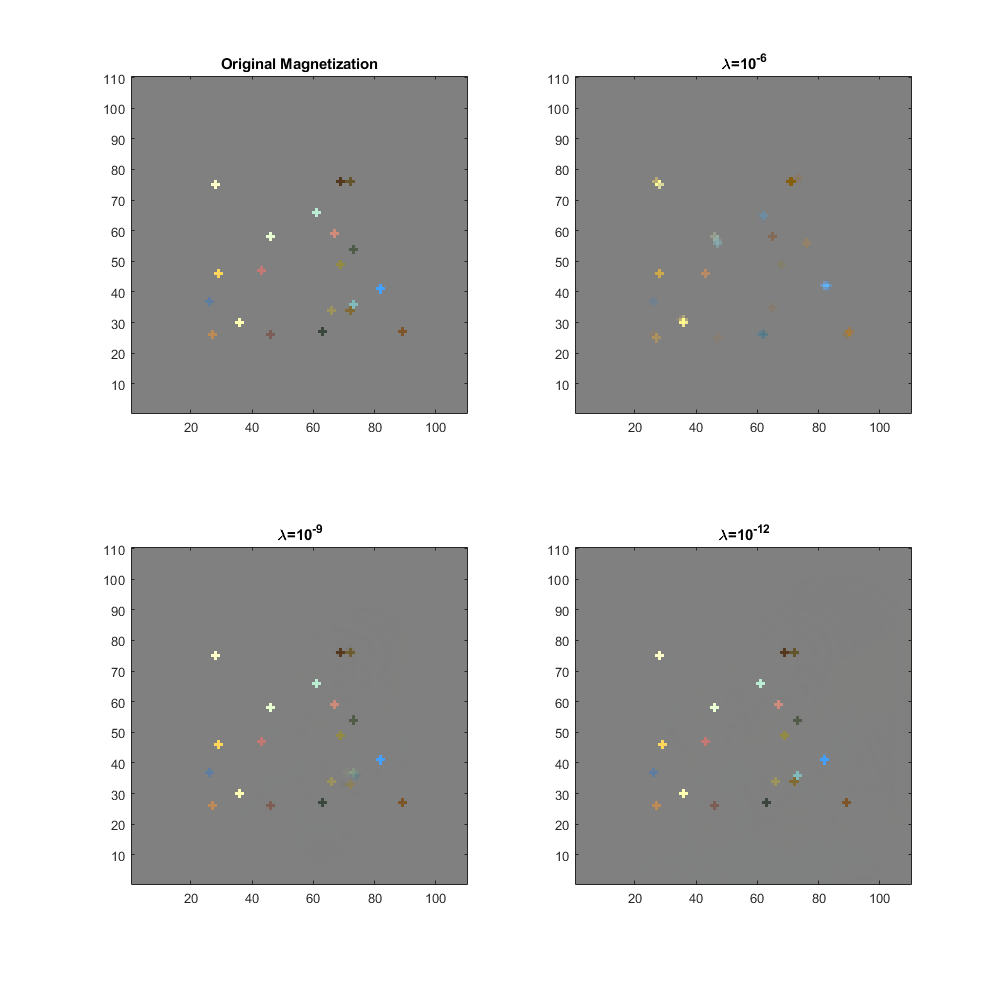}
	\caption{Example for a Sparse Magnetization. 
		Each figure shows for its respective magnetization the magnitude and direction of its dipoles by representing different vectors with diferent colors. The closer a dipole is to zero the closer its assigned color is to grey. 
		The relative distances in total variation to $\bs{\mu}_0$ are 1.294, 0.207 and 0.004 respectively, confirming convergence as expected.
		(Convolution with a 3 by 3 cross matrix was used to increase visibility)}
		\label{fig:Sparse}
\end{figure}

\begin{figure}[hpb]
	\includegraphics[scale=.55]{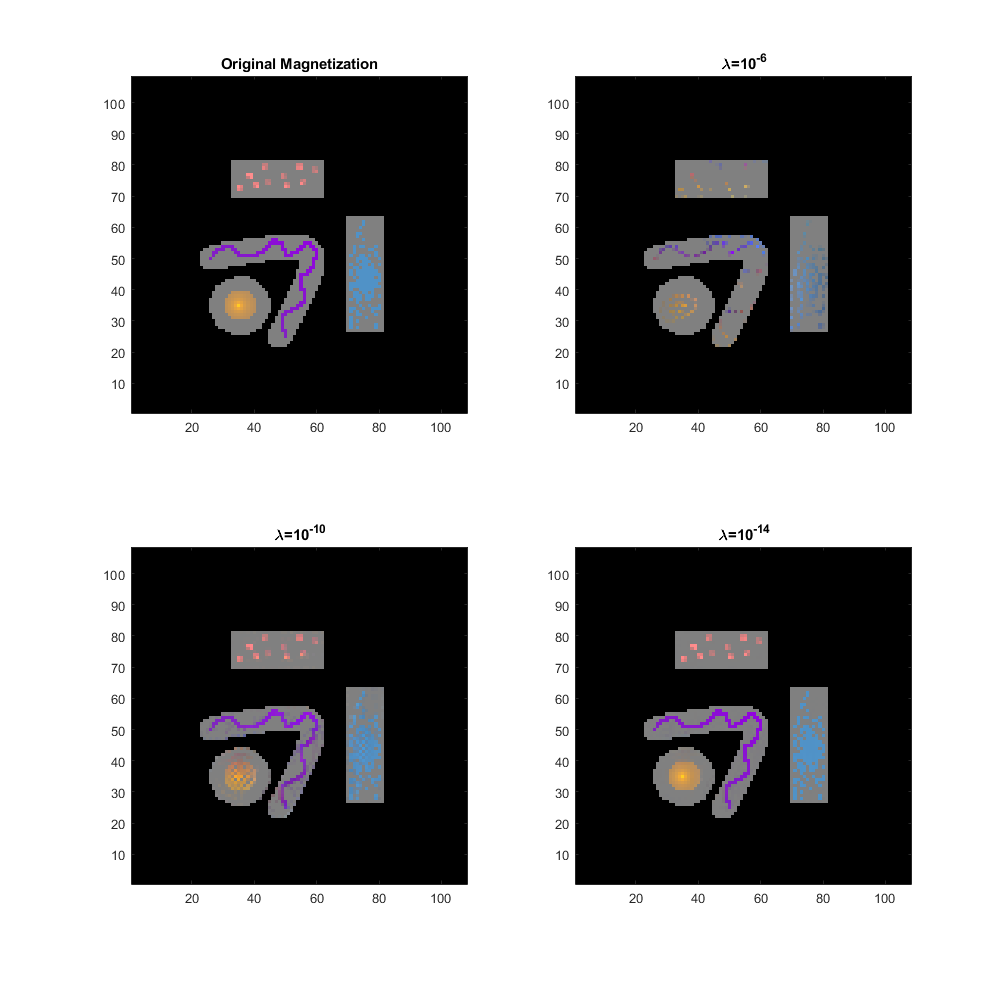}
	\caption{Example for a piece-wise Unidirectional Magnetization.
		The figures are made similarly to the ones of Figure~\ref{fig:Sparse} with black representing the complement of $S$.
		Here the relative distances are 1.167, 0.247 and 0.015}
	\label{fig:Uni}
\end{figure}

\section{Numerical Examples}
\label{Numerical}

In this section we present two examples of numerical reconstructions  illustrating Theorem~\ref{weak*to0}. 
In both cases, we are considering continuous problems with $v=e_3$ (the measured field is $b_3(\bs{\mu}_0)$), $e=0$ (no noise),
and   $S$ and $Q$    compact subsets of the $z=0$ and $z=h$ planes, respectively.  
We discretize the continuous problems by restricting to magnetizations $\bs{\mu}$'s  consisting of a finite number of  dipoles   located on a rectangular grid in the $z=0$ plane intersected with $S$ and   samples    $b_3(\bs{\mu})$ evaluated at points in a rectangular grid in the parallel $z=h$ plane intersected with a rectangle $Q$.   Numerical solutions to the discretized problems are then obtained using the Fast Iterative Shrinkage-Thresholding algorithm (FISTA) from \cite{FISTA} together with the In-Crowd algorithm from \cite{InCrowd}.  
We will consider in  more detail in forthcoming work the relations between the solutions of such  discretized problems and the associated continuous problems of the type addressed in this paper and we will also address the algorithmic and computational details for obtaining solutions to these discrete problems.   The examples provided here are only intended for illustrative purposes.   



The continuous problem for the first example is designed to illustrate the recovery of magnetizations with sparse support as in part (b) of Theorem~\ref{weak*to0}. In this example $S$ and $Q$ are squares in the planes   $z=0$ and $z=h=.1$, respectively, and $\bs{\mu}_0$ consists of  20 dipoles  in $S$ with  moments of differing directions.  The source and measurement  grids both have  $.0187\times .0187$ spacing.  The source grid   consists of $108 \times 108$ points and the moments of each dipole are allowed to take on any values in $\R^3$.    The measurement grid consists of $215\times 215$ points.   Reconstructions are computed for three values of $\lambda$: $10^{-6}$, $10^{-9}$ and $10^{-12}$.    
 
 In the second case, $S$ consists of the union of four disjoint compact regions as shown in Figure~\ref{fig:Uni}.  The restriction of the magnetization $\bs{\mu}_0$ to each component is unidirectional as in part (a) of Theorem~\ref{weak*to0}.   The source grid now consists of the grid points that are contained in the set $S$ and again the moments of these dipoles are unconstrained; i.e., there is no uni-directionality assumption when solving for a reconstruction.   
 The $\bs{\mu}_\lambda$ are taken among all magnetizations supported in those regions for $\lambda$ equal to $10^{-6}$, $10^{-10}$ and $10^{-14}$.

 \bibliographystyle{abbrv}

\bibliography{SparsityTVrefs}

\appendix

\section{Jordan-Brouwer separation theorem
in the non compact case}
\label{app1}
In this section, we record a proof of  the
 Jordan-Brouwer separation theorem for smooth and connected,
complete but not necessarily compact surfaces in $\R^3$. The argument applies 
in any dimension. We are confident the result is known,
but we could not find a
published reference. More general proofs, valid 
for non-smooth manifolds as well,
could be given using deeper facts from algebraic toplogy.
For instance, one based on  Alexander-Lefschetz duality
can be modelled after Theorem 14.13 in
www.seas.upenn.edu/~jean/sheaves-cohomology.pdf  (which deals with compact
topological  manifolds). More precisely, using Alexander-Spanier cohomology with compact 
support  and appealing to
\cite[ch. 6, sec. 6 cor. 12 and sec. 9 thm. 10]{Spanier}, 
one can generalize the proof just mentioned to  handle the case
of non-compact manifolds. Herafter, we merely deal with smooth surfaces
and rely on basic
notions from differential-topology, namely 
intersection theory  modulo 2.

Recall that a smooth  manifold $X$ of dimension $k$ embedded in $\R^n$ is a subset of 
the latter, 
each point of which has a neighborhood $V$ such that $V\cap X=\phi(U)$
where $U$ is an open subset of $\R^k$ and $\phi:U\to\R^n$ a $C^\infty$-smooth 
injective map with injective  derivative at every point. 
The map $\phi$ is called a 
parametrization of $V$ with domain $U$, and the image of its derivative
$D\phi(u)$ at $u$ is the tangent space to $X$ at $\phi(u)$, hereafter 
denoted by
$T_{\phi(u)}X$.
Then, by the constant rank theorem, there is an open set
$W\subset\R^n$ with $W\cap X=V$ and a $C^\infty$-smooth map $\psi:W\to U$
such that $\psi\circ \phi=\textrm{id}$, the identity map of $U$. The 
restriction $\psi_{|V}$ is called a chart with domain $V$. This allows 
one to carry over to $X$ 
local tools from differential calculus, see \cite[ch. 1]{GP}.
We say that $X$ is closed if it is a closed subset of $\R^n$.

If $X$, $Y$ are smooth manifolds embedded in $\R^m$ and $\R^n$ respectively,
and if $Z\subset Y$ 
is a smooth embedded submanifold, a smooth map
$f:X\to Y$ is said to be transversal to $Z$ if
$\textrm{Im} Df(x)+T_{f(x)}Z=T_{f(x)}Y$ at every $x\in X$ 
such that $f(x)\in Z$.
If $f$ is transversal to $Z$, then $f^{-1}(Z)$ is 
an embedded submanifold of $X$ whose codimension is the same as the 
codimension of  $Z$ in $Y$.  In particular, if $X$ is compact and
$\textrm{dim} X+\textrm{dim} Z=
\textrm{dim} Y$, then $f^{-1}(Z)$ consists of finitely many points.
The residue class modulo 2 of the cardinality of such points is 
the intersection number of $f$ with $Z$ modulo 2, denoted by $I_2(f,Z)$.
If in addition $Z$ is closed in $Y$, then $I_2(f,Z)$ is invariant under 
small homotopic deformations of $f$, and this allows one to define
$I_2(f,Z)$ even when  $f$ is not transversal to $Z$, because a suitable
but arbitrary small homotopic deformation of $f$ will guarantee transversality,
see \cite[ch. 2]{GP}.
\begin{theorem} 
\label{JB}If  $\mathcal{A}$ is a $C^\infty$-smooth complete and  connected
surface embedded in $\R^3$, then $\R^3\setminus\mathcal{A}$ has two connected 
components. 
\end{theorem}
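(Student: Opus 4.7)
My plan is to prove the theorem by splitting it into an \emph{at least two} statement, via a mod-$2$ intersection number, and an \emph{at most two} statement, via two-sidedness of $\mathcal{A}$ and a tubular neighborhood. The key tools are transversality, the classification of compact $1$-manifolds with boundary, simple-connectedness of $\R^3$, and the tubular neighborhood theorem for embedded submanifolds.

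\textbf{Step 1 (at least two).} I first show that every smooth loop $\ell\colon S^1\to\R^3$ transverse to $\mathcal{A}$ meets $\mathcal{A}$ in an even number of points. Indeed $\ell$ extends to a smooth $H\colon D^2\to\R^3$ by simple-connectedness of $\R^3$; after perturbing $H$ rel.\ $\partial D^2$ to be transverse to $\mathcal{A}$, the preimage $H^{-1}(\mathcal{A})$ is a compact $1$-manifold (since $\mathcal{A}$ is closed in $\R^3$) with boundary $\ell^{-1}(\mathcal{A})$, and the classification of compact $1$-manifolds with boundary forces $\#\ell^{-1}(\mathcal{A})$ to be even. Next, fix a basepoint $x_0\in\R^3\setminus\mathcal{A}$ and define $f\colon\R^3\setminus\mathcal{A}\to\mathbb{Z}/2$ by $f(y):=\#(\gamma\cap\mathcal{A})\bmod 2$, where $\gamma$ is any smooth path from $x_0$ to $y$ transverse to $\mathcal{A}$; independence of $\gamma$ follows from the loop statement applied to the concatenation of two such paths. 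The function $f$ is locally constant and takes both values, because a short straight segment through any $p\in\mathcal{A}$ in the direction of a unit normal $v_p$ crosses $\mathcal{A}$ transversally exactly once, so its endpoints have distinct $f$-values. Hence $\R^3\setminus\mathcal{A}$ has at least two connected components.

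\textbf{Step 2 (at most two).} I claim $\mathcal{A}$ is two-sided. Fix $p\in\mathcal{A}$ with unit normal $v_p$; for any smooth loop $\sigma\colon[0,1]\to\mathcal{A}$ with $\sigma(0)=\sigma(1)=p$, the pull-back of the normal line bundle of $\mathcal{A}\subset\R^3$ to $[0,1]$ is trivial, so I can continuously propagate $v_p$ to a unit normal $v_t$ at $\sigma(t)$ with $v_0=v_p$ and $v_1\in\{v_p,-v_p\}$. Assuming for contradiction $v_1=-v_p$, apply the tubular neighborhood theorem to $\mathcal{A}$: restricting to the compact $\sigma([0,1])$ yields some $\epsilon_0>0$ such that for $0<\epsilon<\epsilon_0$ the offset curve $\tilde\sigma(t):=\sigma(t)+\epsilon v_t$ lies in $\R^3\setminus\mathcal{A}$, running from $p+\epsilon v_p$ to $p-\epsilon v_p$. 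Closing $\tilde\sigma$ with the straight segment from $p-\epsilon v_p$ to $p+\epsilon v_p$ through $p$ produces a loop in $\R^3$ whose only intersection with $\mathcal{A}$ is the transversal crossing at $p$; this odd intersection number contradicts Step 1. Hence $\mathcal{A}$ admits a global continuous unit normal $\nu$, and the tubular neighborhood theorem produces an open $N\supset\mathcal{A}$ with $N\setminus\mathcal{A}=N_+\sqcup N_-$ (the $\pm\nu$ halves), each connected because $\mathcal{A}$ is.

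To finish the at-most-two bound, take any $y\in\R^3\setminus\mathcal{A}$ and a smooth path in $\R^3$ from $y$ to a chosen point of $N_+$. If the path avoids $\mathcal{A}$, then $y$ lies in the component of $N_+$; otherwise, truncating just before its first intersection with $\mathcal{A}$, we have entered $N$ by continuity and therefore lie in $N_+$ or $N_-$. Either way $y$ lies in the component of $N_+$ or of $N_-$, so $\R^3\setminus\mathcal{A}$ has at most two components, and combined with Step 1 this yields exactly two. The main obstacle I foresee is the two-sidedness argument in Step 2: without invoking heavier tools like Poincar\'e or Alexander duality, one must manufacture an actual loop in $\R^3$ intersecting $\mathcal{A}$ transversally exactly once, and the offset-curve construction is what delivers this while sidestepping the non-compactness of $\mathcal{A}$ via the tubular neighborhood of the compact image $\sigma([0,1])$, on which the thickness can be chosen uniformly positive.
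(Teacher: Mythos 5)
Your proof is correct and relies on the same underlying toolkit as the paper (tubular neighborhoods, transversality, the mod-$2$ boundary theorem), but it organizes the argument in the opposite order and inserts an extra structural step that the paper deliberately avoids. The paper proves \emph{at most two} first and without any appeal to orientability of $\mathcal{A}$: given $y$, it pushes $y$ into the tubular neighborhood, propagates a unit normal along a single path in $\mathcal{A}$ from the nearest point $x_1$ back to a fixed $x_0$, and lands on one of the two fixed points $x_0^{\pm}=x_0\pm t_0 n_0$; whichever sign occurs is irrelevant, so no global normal field is needed. It then proves \emph{at least two} second, by contradiction, via a single loop with odd intersection number. You instead prove \emph{at least two} first by constructing the crossing-parity function $f$, then \emph{use} it to establish that $\mathcal{A}$ is two-sided (a normal-reversing loop in $\mathcal{A}$ would produce, via the offset curve and a short transverse segment, a loop in $\R^3$ meeting $\mathcal{A}$ exactly once), and only then extract the at-most-two bound from a globally oriented tubular neighborhood $N=N_+\sqcup\mathcal{A}\sqcup N_-$. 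Your route proves strictly more (explicit triviality of the normal bundle, which in the paper's treatment is not needed and would appear only as a corollary), at the cost of making Step 2 logically dependent on Step 1, whereas in the paper the two halves are independent; the paper's at-most-two argument is thereby shorter. Both proofs handle non-compactness of $\mathcal{A}$ in the same way, by restricting the tubular-neighborhood thickness to the compact image of a path or loop. Two places you should tighten: the passage from ``every loop in $\mathcal{A}$ based at $p$ preserves $v_p$'' to ``a global continuous unit normal $\nu$ exists'' needs a one-line monodromy remark (define $\nu(q)$ by transport along an arbitrary path from $p$ to $q$; well-definedness is exactly the loop statement, using path-connectedness of $\mathcal{A}$); and when you close the offset curve with the straight segment through $p$, you should note that for $\epsilon$ small enough the segment meets $\mathcal{A}$ only at $p$ and that the corner can be smoothed without introducing new intersections, so that Step 1 is actually applicable to the resulting smooth loop.
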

\begin{proof}
Let $W$ be a tubular neighborhoud of $\mathcal{A}$ in $\R^3$
\cite[Ch. 2, Sec. 3, ex. 3 \& 16]{GP}. That is, 
$W$ is an open  neighborhood of $\mathcal{A}$ in $\R^3$ comprised of points
$y$ having a unique closest point from $X$, say $x$, such that
$|y-x|<\varepsilon(x)$ where $\varepsilon$ is a suitable
smooth and strictly positive function on $\mathcal{A}$.
Thus, we can write $W=\{x+tn(x),\,x\in\mathcal{A},\,|t|<\varepsilon(x)\}$, 
where $n(x)$ is a  
normal vector to $\mathcal{A}$  at $x$ of unit length.
Note that, for each $x\in \mathcal{A}$, 
there are two possible (opposite) 
choices of $n(x)$, but the definition of $W$ makes it irrelevant which one 
we make. Moreover, if we fix $n(x)$ and $\eta\in(0,1)$,
we can find a neighborhood $V$  of $x$ in $\mathcal{A}$  such that, to each 
$y\in V$, there is a unique choice of $n(y)$ with
$|n(y)-n(x)|<\eta$. Indeed, if $\phi:U\to V$ 
is a parametrization with inverse $\psi$ such that
$x\in V$, and if we set
$n_y:=\partial_{x_1}\phi(\psi(y))\wedge\partial_{x_2}\phi(\psi(y))/\|\partial_{x_1}\phi(\psi(y))\wedge\partial_{x_2}\phi(\psi(y))\|$
where $x_1$, $x_2$ are Euclidean coordinates on $U\subset\R^2$
while  $\partial_{x_j}$ 
denotes  the partial derivative with respect to $x_j$ and
the wedge indicates the vector product,
then the two possible choices for $n(y)$ when $y\in V$ are $\pm n_y$.
Thus, if
we select for instance $n(x)=n_x$ and subsequently set $n(y)=n_y$, 
we get upon shrinking $V$ if necessary that
$|n(x)-n(y)|<\eta$ and $|n(x)+n(y)|>2-\eta$ for $y\in V$.
As a consequence, if $\Upsilon:[a,b]\to \mathcal{A}$ is a continuous path,
and if $n_b$ is a unit normal vector to $\mathcal{A}$ at $\Upsilon(b)$, there 
is a continuous choice of $n(\Upsilon(\tau))$ for $\tau\in[a,b]$ such that
$n(\Upsilon(b))=n_b$.

Fix $x_0\in \mathcal{A}$ and let $n_0$ be an arbitrary choice for $n(x_0)$.
Pick $t_0$ with
$0<t_0<\varepsilon(x_0)$,  and define two points in $W$
by $x_0^\pm=x_0\pm t_0 n_0$.
\emph{We claim} that each $y\in \R^3\setminus \mathcal{A}$ can be joined either to 
$x_0^+$ or to $x_0^-$ by a continuous arc contained in $\R^3\setminus \mathcal{A}$. 
Indeed, let $\gamma:[0,1]\to \R^3$ be a continuous path with
$\gamma(0)=y$ and  $\gamma(1)=x_0$. Let $\tau_0\in(0,1]$ be smallest such that
$\gamma(\tau_0)\in \mathcal{A}$; such a $\tau_0$ exists since $\mathcal{A}$ is closed. 
Pick $0<\tau_1<\tau_0$ close enough to $\tau_0$ that 
$\gamma(\tau_1)\in W$, say
$\gamma(\tau_1)=x_1+t_1 n_1$ where $x_1\in \mathcal{A}$, $|t_1|<\varepsilon(x_1)$,
and $n_1$ is a unit vector normal to $\mathcal{A}$ at $x_1$.
Since $\mathcal{A}$ is connected, there is a continuous path
$\Upsilon:[\tau_1,1]\to \mathcal{A}$ such that $\Upsilon(\tau_1)=x_1$ and
$\Upsilon(1)=x_0$. Along the path $\Upsilon$, there is a continuous
choice of $\tau\to n(\Upsilon(\tau))$ such that $n(x_0)=n_0$; this follows 
from a previous remark.  Changing the sign of $t_1$
if necessary, we may assume that $n_1=n(x_1)$. Let $\eta:[\tau_1,1]\to\R_+$
be a continuous function such that $0<|\eta(\tau)|<\varepsilon(\Upsilon(\tau))$
with $\eta(\tau_1)=t_1$ and $\eta(1)=\textrm{sgn}\,t_1|t_0|$. Such an $\eta$ 
exists, since $\varepsilon$ is continuous and strictly 
positive while $|t_1|<\varepsilon(x_1)$ and
$|t_0|<\varepsilon(x_0)$. Now the concatenation of $\gamma$ restricted
to $[0,\tau_1]$ and  $\gamma_1:[\tau_1,1]\to \R^3$
given by $\gamma_1(\tau)=\Upsilon(\tau)+\eta(\tau)n(\Upsilon(\tau))$ is 
a continuous path from $y$ to either $x_0^+$ or $x_0^-$ (depending on the
sign of $t_1$) which is entirely contained in $\R^3\setminus\mathcal{A}$.
\emph{This proves the claim}, showing that 
$\R^3\setminus \mathcal{A}$ has 
at most two components. To see that it has at least two, it is enough to know 
that any smooth cycle $\varphi:\mathbb{S}^1\to\R^3$ 
has intersection number $I_2(\varphi,\mathcal{A})=0$ modulo 2.
Indeed, if this is the case and if $x_0^+$ and $x_0^-$ could be joined
by a continuous arc $\gamma:[0,1]\to\R^3$ not intersecting $\mathcal{A}$, 
then $\gamma$ could be 
chosen $C^\infty$-smooth (see \cite[Ch.1, Sec. 6, Ex. 3]{GP})
and we could complete it
into a cycle $\varphi:\mathbb{S}^1\to\R^3$ 
by concatenation with the segment $[x_0^-,x_0^+]$ 
which intersects $\mathcal{A}$ exactly
once (at $x_0$), in a transversal manner. Elementary modifications
at $x^-_0$ and $x^+_0$ will arrange things so
that  $\varphi$ becomes $C^\infty$-smooth, and this 
would contradict the fact that the number of intersection points with
$\mathcal{A}$ must be even.
Now, if $\mathbb{D}$ is the unit disk,
any smooth map $\varphi:\mathbb{S}^1\to\R^3$ extends to a smooth map
$f:\overline{\mathbb{D}}\to\R^3$ (take for example
$f(re^{i\theta})=e^{1-1/r}\varphi(e^{i\theta})$). Thus, by 
the boundary theorem \cite[p. 80]{GP}, the intersection number modulo 2
of $\varphi$ 
with any smooth  and complete embedded submanifold of dimension 2 in $\R^3$ 
(in particular with $\mathcal{A}$ ) must be zero.
This achieves the proof.
\end{proof}

\end{document}